\documentclass[11pt]{article}
\usepackage{graphicx}
\usepackage{amsmath,amsfonts,amssymb,latexsym,epsfig}
\usepackage{mathrsfs}
\usepackage{verbatim}
\usepackage{latexsym}
\usepackage{amsthm}
\usepackage{times}
\usepackage{color}
\usepackage{tikz}

\definecolor{darkred}{rgb}{.7,0,0}

\definecolor{darkgreen}{rgb}{0,0.5,0}

\definecolor{darkblue}{rgb}{0,0,0.7}

\newcommand{\bvp}{\overline{\vvarphi}}
\renewcommand{\epsilon}{\eps}

\newcommand{\Is}{\mathsf{I}}

\newcommand{\vvarphi}{\phi}

\newcommand{\Hm}{{\cal H}_\gamma}
\newcommand{\Hp}{{\cal H}_\gamma^{\perp}}

\definecolor{darkred}{rgb}{0.9,0.1,0.1}

\DeclareMathOperator{\argmin}{argmin\,}

\DeclareMathOperator{\Id}{Id}

\DeclareMathOperator{\spa}{span}
\DeclareMathOperator{\cov}{cov}

\newtheorem{theorem}{Theorem}[section]
\newtheorem{lemma}[theorem]{Lemma}
\newtheorem{remark}[theorem]{Remark}

\newtheorem{corollary}[theorem]{Corollary}
\newtheorem{proposition}[theorem]{Proposition}

\newtheorem{example}[theorem]{Example}
\newtheorem{definition}[theorem]{Definition}

\numberwithin{equation}{section}

%

\newcommand{\cA}{\mathcal A}
\newcommand{\cB}{\mathcal B}
\newcommand{\cC}{\mathcal C}
\newcommand{\cD}{\mathcal D}
\newcommand{\cF}{\mathcal F}
\newcommand{\cG}{\mathcal{G}}
\newcommand{\cH}{\mathcal{H}}

\newcommand{\cL}{\mathcal L}
\newcommand{\cN}{N}
\newcommand{\cNp}{N_{{P,0}}}

\newcommand{\E}{{\mathbb E}}
\newcommand{\N}{\mathbb{N}}
\newcommand{\R}{\mathbb{R}}
\newcommand{\Tn}{\mathbb{T}^n}

\newcommand{\eps}{\varepsilon}
\newcommand{\pph}{\vvarphi}
\newcommand{\ph}{\phi}

\newcommand{\Me}{\mathcal{M}(\cH)}
\newcommand{\En}{\E^\nu}

\newcommand{\Dkl}{D_{{\rm KL}}}
\newcommand{\DHel}{D_{{\rm hell}}}
\newcommand{\Dtv}{D_{{\rm tv}}}
\newcommand{\Dnm}{\Dkl(\nu \| \mu)}
\newcommand{\TC} { \mathcal{TC} (\cH)}
\newcommand{\HS}{\mathcal{HS}(\cH)}

\newcommand{\nmG}{\cNp(m,\Gamma)}
\newcommand{\nmsG}{\cNp(m_{\star},\Gamma_{\star})}
\newcommand{\Hnm}{H(\nu;\mu)}
\newcommand{\Cs}{C_\star}
\newcommand{\ns}{\nu_\star}
\newcommand{\ea}{e_\alpha}
\newcommand{\LHo}{\cL(\cH^1,\cH^{-1})}
\newcommand{\LHk}{\cL(\cH^{1-\kappa},\cH^{-(1-\kappa)})}
\newcommand{\Cni}{C_0^{-1}}
\newcommand{\Gn}{\Gamma_n}
\newcommand{\Gs}{\Gamma_\star}
\newcommand{\Hg}{\cH_\gamma}
\newcommand{\la}{\langle}
\newcommand{\ra}{\rangle}
\newcommand{\xa}{\xi_\alpha}
\newcommand{\nt}{\nu_t^{1\to 2}}
\newcommand{\ntg}{\nu_{t;\gamma}^{1 \to 2}}
\newcommand{\mg}{\mu_\gamma}
\newcommand{\tLg}{\tilde{\Lambda}_\gamma}
\newcommand{\tLgt}{\tilde{\Lambda}_{\gamma,t}}
\newcommand{\rTV}{\|_{{\rm tv}}}
\newcommand{\pg}{\pi_\gamma}

%
%
%
\begin{document}

\setlength{\baselineskip}{10pt} \title{Kullback-Leibler Approximation for
  Probability\\ Measures on Infinite Dimensional Spaces} \author{
  F.J. Pinski
  \footnote{E-mail address: frank.pinski@uc.edu} \\
  Physics Department\\
  University of Cincinnati\\
  PO Box 210011\\
  Cincinnati OH 45221, USA\\
  G. Simpson
\footnote{E-mail address: simpson@math.drexel.edu}\\
Department of Mathematics\\
Drexel University\\
Philadelphia, PA 19104 USA\\
  and\\
  A.M. Stuart and H. Weber
  \footnote{E-mail address: \{a.m.stuart,hendrik.weber\}@warwick.ac.uk.} \\
  Mathematics Institute \\
  Warwick University \\
  Coventry CV4 7AL, UK }
\maketitle

\begin{abstract}
In a variety of applications it is important to extract information from
a probability measure $\mu$ on an infinite dimensional space. Examples
include the Bayesian approach to inverse problems and 
possibly conditioned) continuous time Markov processes.
It may then be of interest to find a measure $\nu$, from within
a simple class of measures, which approximates $\mu$. This problem
is studied in the case where the Kullback-Leibler divergence
is employed to measure the quality of the approximation. A calculus
of variations viewpoint is adopted and the particular case where $\nu$ is
chosen from the set of Gaussian measures is studied in detail. Basic
existence and uniqueness theorems are established, together with
properties of minimising sequences. Furthermore, parameterisation
of the class of Gaussians through the mean and inverse covariance is 
introduced,  the need for regularisation is explained, and a
regularised minimisation is studied in detail. The 
calculus of variations framework resulting from this work
provides the appropriate underpinning for
computational algorithms.
\end{abstract}


\section{Introduction}
\label{sec:intro}

This paper is concerned with the problem of minimising the
Kullback-Leibler divergence between a pair of probability measures, viewed
as a problem in the calculus of variations. We are given a measure
$\mu$, specified  by its Radon-Nikodym derivative with respect to
a reference measure $\mu_0$, and we find the closest element $\nu$
from a simpler set of probability measures. After an initial study
of the problem in this abstract context, we specify to the situation
where the reference measure $\mu_0$ is Gaussian and the approximating
set comprises Gaussians. It is necessarily the case that minimisers
$\nu$ are then equivalent as measures to $\mu_0$ and we use the
Feldman-Hajek Theorem to characterise such $\nu$ in terms of
their inverse covariance operators. This induces a natural formulation
of the problem as minimisation over the mean, from the Cameron-Martin
space of $\mu_0$, and over an operator from a  weighted Hilbert-Schmidt space.
We study this problem from the point of view of the calculus
of variations, studying properties of minimising sequences,
regularisation to improve the space in which operator convergence is
obtained, and uniqueness under a slight strengthening of a log-convex
assumption on the measure $\mu.$ 

In the situation where the minimisation is over a convex set of 
measures $\nu$, the problem is classical and completely 
understood \cite{CS}; in particular, there is uniqueness of minimisers.
However, the emphasis in our work is on situations where the set of 
measures $\nu$ is not convex, such as the set of Gaussian measures, and in this
context uniqueness cannot be expected in general. However some of the ideas 
used in \cite{CS} are useful in our general developments, in particular
methodologies to extract minimising sequences converging in total variation. 
Furthermore, in the finite dimensional case the minimisation problem  at 
hand was studied by McCann \cite{McCann} in the context of  gas dynamics. 
He introduced the concept of ``displacement convexity" which was 
one of the main ingredients for the recent developments in the theory 
of mass transportation (e.g. \cite{AGS, Villani}).  
Inspired by the work of McCann, we identify situations in which
uniqueness of minimisers can occur even when approximating over
non-convex classes of measures.

In the study of inverse problems in partial differential equations, when given
a Bayesian formulation \cite{S10a}, and in the study of conditioned 
diffusion processes \cite{HSV11},
the primary goal is the extraction of information from a probabililty
measure $\mu$ on a function space. This task often requires computational
methods. One commonly adopted approach is to find the maximum a posteriori
(MAP) estimator which corresponds to identifying the centre of balls of
maximal probability, in the limit of vanishingly small radius 
\cite{DLSV13,kaipio2005statistical}; in the context of inverse problems
this is linked to the classical theory of Tikhonov-Phillips regularisation 
\cite{engl1996regularization}. Another commonly adopted approach is to
employ Monte-Carlo Markov chain (MCMC) methods \cite{liu2008monte} to sample
the probability measure of interest. The method of MAP estimation 
can be computationally tractable, but loses important
probabilistic information. In contrast MCMC methods can, in principle,
determine accurate probabilistic information but may be very expensive. 
The goal of this work is to provide the mathematical basis for computational
tools which lie between MAP estimators and MCMC methods. Specifically
we wish to study the problem of approximating the measure $\mu$ from
a simple class of measures and with quality of approximation measured
by means of the Kullback-Leibler divergence. This holds the potential
for being a computational tool which is both computationally tractable
and provides reliable probabilistic information. The problem leads to
interesting mathematical questions in the calculus of variations, and study of
these questions form the core of this paper.

Approximation with respect to Kullback-Leibler divergence is not new
and indeed forms a widely used tool in the field of machine learning
\cite{bishop2006pattern} with motivation being the interpretation of
Kullback-Leibler divergence as a measure of loss of information.
Recently the methodology has been
used for the coarse-graining of stochastic lattice systems
\cite{katsoulakis2007coarse}, 
simple models for data assimilation 
\cite{archambeau2007gaussian,archambeau2007variational}, 
the study of models in ocean-atmosphere science
\cite{majda2011improving, giannakis2012quantifying} 
and molecular dynamics \cite{katsoulakis2013information}. 
However none of this applied work has studied the underlying calculus
of variations problem which is the basis for the algorithms employed.
Understanding the properties of minimising sequences is crucial
for the design of good finite dimensional approximations, see
for example \cite{ball1987numerical}, and this fact motivates the work herein.
In the companion paper \cite{PSSW14} we will demonstrate the
use of algorithms for Kullback-Leibler minimisation 
which are informed by the analysis herein.

In section \ref{sec:setup} we describe basic facts about KL minimisation
in an abstract setting, and include an example illustrating our
methodology, together with the fact
that uniqueness is typically not to be expected when approximating 
within the Gaussian class. Section \ref{sec:abstract} then concentrates
on the theory of minimisation with respect to Gaussians. We demonstrate
the existence of minimisers, and then develop a regularisation theory needed 
in the important case where the inverse covariance operator is parameterised
via a Schr\"odinger potential. We also study the restricted class of target
measures for which uniqueness can be expected, and we generalize the
overall setting to the study of Gaussian mixtures. Proofs
of all of our results are collected in section \ref{sec:proofs},
whilst the Appendix contains variants on a number of classical
results which underlie those proofs.

\subsection*{}
\emph{Acknowledgments:} The work of AMS is supported by ERC, EPSRC and ONR.
GS was supported by NSF PIRE grant OISE-0967140 and DOE grant DE-SC0002085. 
Visits by FJP and GS to Warwick were supported by ERC, EPSRC and ONR.
AMS is grateful to Colin Fox 
for fruitful discussions on related topics.

\section{General Properties of KL-Minimisation}
\label{sec:setup}

In subsection \ref{ssec:BT} we present some basic background theory
which underpins this paper. In subsection \ref{ssec:Examples}
we provide an explicit finite dimensional example which serves to
motivate the questions we study in the remainder of the paper.

\subsection{Background Theory}
\label{ssec:BT}

In this subsection we recall some general facts about  Kullback-Leibler approximation on an arbitrary Polish space.
Let $\cH$ be a Polish space endowed with its Borel sigma algebra $\cF$. Denote by $\Me$ the set of Borel probability measures on $\cH$ and let $\cA \subset \Me$.  Our aim is to find the best approximation of a target measure $\mu \in \Me$  in the set $\cA$ of ``simpler" measures. 
As a measure for closeness we choose the Kullback-Leibler divergence, also
known as the relative entropy. For any $\nu \in \Me$ that is absolutely continuous with respect to $\mu$ it is given by 
\begin{equation}
\Dnm = \int_H \log \bigg( \frac{d \nu}{ d \mu}(x) \bigg)  \frac{d \nu}{ d \mu}(x) \, \mu(dx) = \E^\mu \bigg[  \log \bigg( \frac{d \nu}{ d \mu}(x) \bigg)  \frac{d \nu}{ d \mu}(x)   \bigg],\label{e:KL}
\end{equation}
where we use the convention that $0 \log 0 =0$.  If $\nu$ is not absolutely continuous with respect to $\mu$, then the Kullback-Leibler divergence is defined as $+\infty$. 
The main aims of this article are to discuss the properties of the minimisation problem 
\begin{equation}
\underset{\nu \in \cA} \argmin \Dnm \label{e:problem1}
\end{equation}
for  suitable sets $\cA$, and to create a mathematical framework appropriate
for the development of algorithms to perform the minimisation.

The Kullback-Leibler divergence is not symmetric in its arguments and minimising $\Dkl(\mu \| \nu)$ over $\nu$ for fixed $\mu$ in general gives a different result than \eqref{e:problem1}. Indeed, if $\cH$ is $\R^n$ and $\cA$ is the set of Gaussian measures on $\R^n$, then minimising $\Dkl(\mu \| \nu)$  yields for $\nu$ the Gaussian measure with the same mean and variance as $\mu$;
see \cite[section 10.7]{bishop2006pattern}. 
Such an approximation is undesirable in many situations,
for example if $\mu$ is bimodal; see \cite[Figure 10.3]{bishop2006pattern}. 
We will demonstrate by example in subsection \ref{ssec:Examples}
that problem \eqref{e:problem1} is a more desirable minimisation problem which
can capture local properties of the measure $\mu$ such as individual modes.
Note that the objective function in the minimisation \eqref{e:problem1} 
can formulated in terms of expectations only over measures from $\cA$; if 
this set is simple then this results in computationally expedient algorithms. 
Below we will usually chose for $\cA$ a set of Gaussian measures and hence these expectations are readily computable.

The following well-known result gives existence of minimisers for  problem  \eqref{e:problem1} as soon as the set $\cA$ is closed under weak convergence of probability measures.
 For the reader's convenience we give a proof in the  
Appendix. We essentially follow the exposition in \cite[Lemma 1.4.2]{DE};
see also \cite[Lemma 9.4.3]{AGS}.
  
\begin{proposition}\label{le:firstproperties}
Let $(\nu_n)$ and $(\mu_n)$ be sequences in $\Me$ that converge weakly to $\nu_\star$ and $\mu_\star$. Then we have
\begin{equation*}
\liminf_{n \to \infty} \Dkl(\nu_n \| \mu_n) \geq \Dkl(\ns \| \mu_\star).
\end{equation*} 
Furthermore, for any $\mu \in \Me$ and for any $M < \infty$ the set 
\begin{equation*}
\{ \nu \in \Me \colon \Dkl(\nu \|\mu ) \leq M\} 
\end{equation*}
is compact with respect to weak convergence of probability measures.
\end{proposition}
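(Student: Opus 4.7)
The plan is to establish both statements via the Donsker--Varadhan variational representation of the Kullback--Leibler divergence: for any $\nu, \mu \in \Me$,
\begin{equation*}
\Dkl(\nu \| \mu) = \sup_{\phi \in C_b(\cH)} \left\{ \int_\cH \phi \, d\nu - \log \int_\cH e^\phi \, d\mu \right\}.
\end{equation*}
This is a standard identity (it can be taken as an alternative definition, or proved by a convex duality argument whose variants are routine and would presumably be placed in the appendix the authors mention). Once it is available, both claims fall out without any deep effort.

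For the lower semicontinuity statement, I would fix an arbitrary $\phi \in C_b(\cH)$ and observe that $e^\phi$ is also bounded and continuous, so weak convergence $\nu_n \to \ns$ and $\mu_n \to \mu_\star$ yields $\int \phi \, d\nu_n \to \int \phi \, d\ns$ and $\int e^\phi \, d\mu_n \to \int e^\phi \, d\mu_\star$. Hence each functional $\nu,\mu \mapsto \int \phi \, d\nu - \log \int e^\phi \, d\mu$ is continuous under joint weak convergence, and since the supremum of continuous functionals is lower semicontinuous one obtains $\liminf_n \Dkl(\nu_n \| \mu_n) \ge \Dkl(\ns \| \mu_\star)$.

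For the compactness statement, it suffices to show that the sublevel set $\cS_M := \{\nu : \Dkl(\nu\|\mu) \le M\}$ is both closed and tight with respect to weak convergence; then Prokhorov's theorem gives relative compactness, and closure gives compactness. Closure is immediate from part one applied with $\mu_n \equiv \mu$. For tightness, I would use the fact that on a Polish space the single measure $\mu$ is tight, so for every $\eta > 0$ we can pick a compact $K_\eta \subset \cH$ with $\mu(K_\eta^c) \le \eta$, and then deduce a uniform bound on $\nu(K_\eta^c)$ for $\nu \in \cS_M$ from the standard entropy inequality
\begin{equation*}
\nu(A) \le \frac{\log 2 + \Dkl(\nu\|\mu)}{\log\bigl(1+1/\mu(A)\bigr)},
\end{equation*}
which follows from the Donsker--Varadhan formula by testing against $\phi = t \mathbf{1}_A$ (approximated by continuous functions) and optimising in $t$. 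Choosing $\eta$ sufficiently small in terms of any prescribed $\delta$ and $M$ makes $\nu(K_\eta^c) \le \delta$ uniformly on $\cS_M$, which is tightness.

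The only mildly delicate point is justifying the entropy inequality on a general Polish space, since the test function $t\mathbf{1}_A$ is not continuous; this is handled by approximating $\mathbf{1}_A$ from below by continuous functions using regularity of Borel probability measures on Polish spaces and passing to the limit inside the supremum. Apart from this technicality, which I would expect the authors to quote from \cite{DE}, the argument is essentially a two-line application of the variational representation together with Prokhorov's theorem.
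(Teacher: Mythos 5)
Your proposal is correct and follows essentially the same route as the paper: the Donsker--Varadhan variational formula gives lower semicontinuity as a supremum of weakly continuous functionals, and compactness follows from tightness of the sublevel set obtained by testing the variational formula against (a multiple of) the indicator of the complement of a compact set with small $\mu$-mass, plus Prokhorov's theorem. The paper sidesteps your ``mildly delicate point'' simply by noting that the supremum in the variational formula may equivalently be taken over bounded \emph{measurable} functions, so the indicator test function is admissible directly.
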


Proposition \ref{le:firstproperties} yields the following immediate corollary
which, in particular, provides the existence of minimisers from within the
Gaussian class:

\begin{corollary}\label{c:firstproperties}
Let $\cA$ be closed with respect to weak convergence. Then,
for given $\mu \in \Me$,  assume that there exists $\nu \in \cA$
such that $\Dnm<\infty.$ It follows that
there exists a minimiser $\nu \in \cA$
solving problem \eqref{e:problem1}. 
\end{corollary}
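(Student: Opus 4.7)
The plan is to apply the direct method of the calculus of variations, using Proposition \ref{le:firstproperties} as the compactness and lower semicontinuity ingredient.

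First, since there exists some $\nu \in \cA$ with $\Dnm < \infty$, the infimum $I := \inf_{\nu \in \cA} \Dnm$ is finite and nonnegative. I would pick a minimising sequence $(\nu_n) \subset \cA$ with $\Dkl(\nu_n \| \mu) \to I$, and discard finitely many terms so that $\Dkl(\nu_n \| \mu) \leq I + 1 =: M$ for all $n$.

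Next, the whole sequence then lies inside the sublevel set $\{\nu \in \Me : \Dkl(\nu \| \mu) \leq M\}$, which by the second part of Proposition \ref{le:firstproperties} is compact with respect to weak convergence of probability measures. Hence there is a subsequence $(\nu_{n_k})$ converging weakly to some $\nu_\star \in \Me$. Because $\cA$ is assumed closed under weak convergence, we obtain $\nu_\star \in \cA$, so $\nu_\star$ is an admissible candidate.

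Finally, I would apply the lower semicontinuity statement in Proposition \ref{le:firstproperties} to the pair of sequences $(\nu_{n_k})$ and the constant sequence $\mu_n \equiv \mu$ (which trivially converges weakly to $\mu$), yielding
\begin{equation*}
\Dkl(\nu_\star \| \mu) \leq \liminf_{k \to \infty} \Dkl(\nu_{n_k} \| \mu) = I.
\end{equation*}
Since $\nu_\star \in \cA$ gives the reverse inequality $\Dkl(\nu_\star \| \mu) \geq I$ by definition of the infimum, we conclude that $\nu_\star$ attains the minimum. There is no real obstacle here beyond correctly invoking Proposition \ref{le:firstproperties}; the only thing worth being careful about is that the finiteness hypothesis is genuinely used to ensure $I < \infty$ so that a minimising sequence exists in a nontrivial sublevel set.
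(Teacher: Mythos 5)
Your proof is correct and follows exactly the route the paper intends: the corollary is stated as an immediate consequence of Proposition \ref{le:firstproperties}, obtained by the direct method — extract a weakly convergent subsequence from a minimising sequence via compactness of sublevel sets, use closedness of $\cA$ to keep the limit admissible, and conclude by lower semicontinuity. No gaps.
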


If we know in addition that the set $\cA$ is \emph{convex} then the following classical stronger result holds: 
\def \CS {\cite[Theorem 2.1]{CS}}
\begin{proposition}[\CS]\label{thm:CS}
Assume that  $\cA$ is convex and closed with respect to \emph{total variation} convergence. Assume furthermore that there exists a $\nu \in \cA$ with $\Dnm < \infty$. Then there exists a \emph{unique} minimiser $\nu \in \cA$
solving problem \eqref{e:problem1}. 
\end{proposition}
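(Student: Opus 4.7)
The plan is to split into uniqueness and existence, with convexity of $\cA$ driving both arguments via the strict convexity of $\phi(t) = t\log t$.

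For \emph{uniqueness}, suppose $\nu_1, \nu_2 \in \cA$ both attain $I := \inf_{\nu \in \cA} \Dkl(\nu\|\mu) < \infty$. Set $\rho_i = d\nu_i/d\mu$ and let $\bar\nu = \tfrac12(\nu_1+\nu_2)$, which belongs to $\cA$ by convexity. Then $d\bar\nu/d\mu = \tfrac12(\rho_1+\rho_2)$, and strict convexity of $\phi$ gives
\begin{equation*}
\Dkl(\bar\nu\|\mu) = \int \phi\!\left(\tfrac{\rho_1+\rho_2}{2}\right) d\mu \le \tfrac12 \Dkl(\nu_1\|\mu) + \tfrac12 \Dkl(\nu_2\|\mu) = I,
\end{equation*}
with strict inequality on a set of positive $\mu$-measure unless $\rho_1 = \rho_2$ $\mu$-a.s. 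Since $\bar\nu\in\cA$ forces $\Dkl(\bar\nu\|\mu)\ge I$, equality must hold pointwise, i.e.\ $\nu_1=\nu_2$.

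For \emph{existence}, take a minimising sequence $\nu_n \in \cA$ with $\Dkl(\nu_n\|\mu) \to I$. I would show $(\nu_n)$ is Cauchy in total variation by a parallelogram-type argument. The key analytic ingredient is the strengthened convexity inequality for $\phi(t) = t\log t$,
\begin{equation*}
\tfrac12 \phi(a) + \tfrac12 \phi(b) - \phi\!\left(\tfrac{a+b}{2}\right) \;\ge\; c\,\frac{(a-b)^2}{a+b}\qquad (a,b\ge 0),
\end{equation*}
for a universal constant $c>0$ (verified by elementary calculus; I would record this as a short Appendix lemma). Applying it with $a=\rho_n$, $b=\rho_m$ and integrating against $\mu$ yields
\begin{equation*}
\tfrac12\Dkl(\nu_n\|\mu) + \tfrac12\Dkl(\nu_m\|\mu) - \Dkl\!\left(\tfrac{\nu_n+\nu_m}{2}\,\Big\|\,\mu\right)
\;\ge\; c \int \frac{(\rho_n-\rho_m)^2}{\rho_n+\rho_m}\,d\mu.
\end{equation*}
A Cauchy--Schwarz estimate $\|\nu_n-\nu_m\|_{\mathrm{tv}}^2 = \bigl(\int |\rho_n-\rho_m|\,d\mu\bigr)^2 \le 2\int (\rho_n-\rho_m)^2/(\rho_n+\rho_m)\,d\mu$ converts the right-hand side into control on total variation. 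Since $\tfrac12(\nu_n+\nu_m)\in\cA$ by convexity, its KL divergence is at least $I$, and the left-hand side tends to $0$ as $n,m\to\infty$. Hence $(\nu_n)$ is Cauchy in total variation, and closedness of $\cA$ produces a limit $\ns\in\cA$. Since total variation convergence implies weak convergence, Proposition~\ref{le:firstproperties} gives $\Dkl(\ns\|\mu) \le \liminf_n \Dkl(\nu_n\|\mu) = I$, so $\ns$ is the required minimiser.

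The main obstacle is the strengthened convexity estimate: verifying the pointwise inequality for $\phi$ with the correct right-hand side (so that Cauchy--Schwarz yields total variation rather than merely Hellinger control) is the only non-routine step, everything else being a soft consequence of convexity of $\cA$ and the compactness/lower semicontinuity already established in Proposition~\ref{le:firstproperties}.
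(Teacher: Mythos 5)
Your proof is correct and follows essentially the same route as the source the paper cites for this result (the paper does not reprove Proposition~\ref{thm:CS} itself, but its proof of the closely related Lemma~\ref{le:CS} uses the same idea): your pointwise strengthened-convexity estimate for $t\log t$, once integrated, is exactly one half of the parallelogram identity \eqref{e:parallelogra}, and your Cauchy--Schwarz step is precisely a proof of Pinsker's inequality for the pairs $\bigl(\nu_n,\tfrac12(\nu_n+\nu_m)\bigr)$. The only point worth making explicit is that passing from the total-variation Cauchy property to an actual limit uses completeness of $\Me$ in the total variation norm (it is a closed subset of the Banach space of finite signed measures), with closedness of $\cA$ then placing the limit in $\cA$.
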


However in most situations of interest in this article, such as
approximation by Gaussians, the set $\cA$ is not convex. Moreoever,  the proof of Proposition \ref{thm:CS} does not carry over 
to the case of non-convex $\cA$ and, indeed, uniqueness of minimisers is
not expected in general in this case (see, however, the discussion of uniqueness in Section \ref{ssec:uniqueness}). Still, the methods used in proving  
Proposition \ref{thm:CS}  do have the following interesting consequence for our setting. Before we state it we recall the definition of the total variation norm of two probability measures. It is given by
\begin{align*}
\Dtv(\nu,\mu)= \| \nu - \mu \rTV &= \frac12 \int  \left| \frac{d \nu}{d \lambda}(x) - \frac{d \mu}{d \lambda}(x)\right|  \lambda(dx) \notag\\
\end{align*}
where $\lambda$ is a probability measure on $\cH$ such that $\nu \ll \lambda$ and $\mu \ll \lambda$

\begin{lemma}\label{le:CS}
Let $(\nu_n)$ be a sequence in $\Me$ and let $\ns \in \Me$ and $\mu \in \Me$ be probability measures such that for any $n \geq 1 $ we have  $\Dkl(\nu_n\|\mu)<\infty$ and $\Dkl(\ns\|\mu)<\infty$.  Suppose that the $\nu_n$ converge weakly to $\nu_{\star}$ and in addition that 
\begin{equation*}
\Dkl(\nu_n\| \mu) \to \Dkl (\ns\| \mu).
\end{equation*}
Then $\nu_n$ converges to $\ns$ in total variation norm. 
\end{lemma}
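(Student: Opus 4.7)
The strategy is the Csisz\'ar-style one: combine a parallelogram identity for relative entropy with the lower semicontinuity of $\Dkl$ to show that the midpoint relative entropy converges, then convert this into total variation convergence via Pinsker's inequality. Because $\Dkl(\nu_n\|\mu)<\infty$ and $\Dkl(\ns\|\mu)<\infty$, all the measures involved are absolutely continuous with respect to $\mu$, so I may work throughout with the densities $\phi_n = d\nu_n/d\mu$ and $\phi_\star = d\ns/d\mu$.

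First I would introduce the midpoints $\bar\nu_n := \tfrac12(\nu_n + \ns)$, which have density $\bar\phi_n = \tfrac12(\phi_n + \phi_\star)$ with respect to $\mu$ and still converge weakly to $\ns$. The lower semicontinuity from Proposition \ref{le:firstproperties} gives $\liminf_n \Dkl(\bar\nu_n\|\mu) \geq \Dkl(\ns\|\mu)$, while the convexity of $t\mapsto t\log t$ yields the matching upper bound
\begin{equation*}
\Dkl(\bar\nu_n\|\mu) \leq \tfrac12\Dkl(\nu_n\|\mu) + \tfrac12\Dkl(\ns\|\mu) \longrightarrow \Dkl(\ns\|\mu),
\end{equation*}
the convergence on the right coming from the hypothesis of the lemma. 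Hence $\Dkl(\bar\nu_n\|\mu) \to \Dkl(\ns\|\mu)$.

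The key algebraic step is the parallelogram identity
\begin{equation*}
\Dkl(\nu_n\|\mu) + \Dkl(\ns\|\mu) = 2\Dkl(\bar\nu_n\|\mu) + \Dkl(\nu_n\|\bar\nu_n) + \Dkl(\ns\|\bar\nu_n),
\end{equation*}
which I would verify directly by expanding every term as an integral against $\mu$ of $\phi_n,\phi_\star,\bar\phi_n$ times the appropriate logarithms and noting that the contributions $\int(\phi_n+\phi_\star)\log\bar\phi_n\,d\mu$ cancel. By hypothesis the left-hand side tends to $2\Dkl(\ns\|\mu)$, and by the previous paragraph $2\Dkl(\bar\nu_n\|\mu)$ tends to the same limit, so the non-negative remainders satisfy $\Dkl(\nu_n\|\bar\nu_n) + \Dkl(\ns\|\bar\nu_n) \to 0$; in particular $\Dkl(\nu_n\|\bar\nu_n)\to 0$.

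Finally I would invoke Pinsker's inequality $\|\nu_n - \bar\nu_n\rTV^{\,2} \leq \tfrac12\Dkl(\nu_n\|\bar\nu_n)$ together with the identity $\nu_n - \bar\nu_n = \tfrac12(\nu_n - \ns)$ to conclude that $\|\nu_n - \ns\rTV \to 0$. The main obstacle is the parallelogram identity: once it is in hand, strict convexity of $\Dkl(\cdot\|\mu)$ in its first argument delivers the midpoint convergence and Pinsker converts this into the desired TV statement. The absolute continuities $\nu_n\ll\bar\nu_n$ and $\ns\ll\bar\nu_n$ that are needed to legitimise the identity are automatic from $\bar\phi_n \geq \tfrac12\phi_n$ and $\bar\phi_n \geq \tfrac12\phi_\star$, and the finiteness hypotheses prevent any $\infty-\infty$ ambiguity in the cancellations.
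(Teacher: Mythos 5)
Your proof is correct and rests on the same Csisz\'ar-style machinery as the paper's: the parallelogram identity for relative entropy, lower semicontinuity of $\Dkl(\cdot\|\mu)$ under weak convergence, and Pinsker's inequality. The one genuine difference is the choice of pairing in the parallelogram identity. The paper applies it to two elements $\nu_n,\nu_m$ of the sequence, deduces that $(\nu_n)$ is Cauchy in total variation, and then identifies the TV limit with $\ns$ via the assumed weak convergence. You instead pair $\nu_n$ directly with $\ns$, using lower semicontinuity (applied to $\bar\nu_n\rightharpoonup\ns$) for the lower bound on the midpoint entropy and convexity of $t\mapsto t\log t$ for the matching upper bound; this gives $\Dkl(\nu_n\|\bar\nu_n)\to 0$ and, via $\nu_n-\bar\nu_n=\tfrac12(\nu_n-\ns)$, the TV convergence directly. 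Your route is marginally more economical at the end (no Cauchy argument, no appeal to completeness of the space of measures in TV, no separate identification of the limit), at the cost of needing the convexity upper bound on $\Dkl(\bar\nu_n\|\mu)$, which the paper's symmetric pairing sidesteps. Both are sound; your handling of the finiteness and absolute-continuity issues needed to legitimise the cancellation in the identity is also adequate, since $d\nu_n/d\bar\nu_n\le 2$ keeps all remainder terms bounded.
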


The proof of Lemma~\ref{le:CS} can be found in Section~\ref{sec:lemmaCS}.
Combining Lemma \ref{le:CS} with Proposition \ref{le:firstproperties}
implies in particular the following:
\begin{corollary}\label{cor:mini}
Let $\cA$ be closed with respect to weak convergence and $\mu$ such that there exists a $\nu \in \cA$ with $\Dnm < \infty$. Let $\nu_n \in \cA$ satisfy 
\begin{equation}\label{e:minimising}
\Dkl(\nu_n \| \mu) \to \inf_{\nu \in \cA} \Dnm.
\end{equation}
Then, after passing to a subsequence, $\nu_n$ converges weakly to a $\ns \in \cA$  that realises the infimum in \eqref{e:minimising}. Along the
subsequence we have, in addition, that
\begin{equation*}
\| \nu_n - \ns \rTV \to 0.
\end{equation*}
\end{corollary}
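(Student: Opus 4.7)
The plan is to combine the compactness statement in Proposition~\ref{le:firstproperties} with the sharpening given by Lemma~\ref{le:CS}, so the argument is essentially a bookkeeping exercise once those ingredients are in place.

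First, I would observe that since there exists some $\nu \in \cA$ with $\Dnm < \infty$, the infimum $I := \inf_{\nu \in \cA} \Dnm$ is finite, and in particular the minimising sequence satisfies $\Dkl(\nu_n \| \mu) \leq M$ for some $M < \infty$ and all sufficiently large $n$. By the second part of Proposition~\ref{le:firstproperties}, the sublevel set $\{\nu \in \Me : \Dkl(\nu \| \mu) \leq M\}$ is weakly compact, so I can extract a subsequence (which I relabel $\nu_n$) converging weakly to some $\ns \in \Me$. Because $\cA$ is weakly closed, $\ns \in \cA$.

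Next I would apply the lower semi-continuity part of Proposition~\ref{le:firstproperties} with the constant sequence $\mu_n \equiv \mu$ (trivially weakly convergent to $\mu$), to get
\begin{equation*}
\Dkl(\ns \| \mu) \leq \liminf_{n \to \infty} \Dkl(\nu_n \| \mu) = I.
\end{equation*}
Since $\ns \in \cA$ we also have $\Dkl(\ns \| \mu) \geq I$, so equality holds and $\ns$ realises the infimum. In particular, $\Dkl(\ns \| \mu) < \infty$, which is one of the hypotheses required to invoke Lemma~\ref{le:CS}.

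Finally, I have the full numerical convergence $\Dkl(\nu_n \| \mu) \to I = \Dkl(\ns \| \mu)$, together with the weak convergence $\nu_n \rightharpoonup \ns$ and the finiteness of all KL divergences involved. Lemma~\ref{le:CS} then applies verbatim and yields $\| \nu_n - \ns \rTV \to 0$ along the chosen subsequence. There is no genuine obstacle here: the only subtlety is making sure both KL-finiteness assumptions of Lemma~\ref{le:CS} are available, and the finiteness of $\Dkl(\ns\|\mu)$ is obtained for free from the lower semi-continuity step above rather than assumed a priori.
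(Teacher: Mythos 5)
Your proposal is correct and follows exactly the route the paper intends: the corollary is stated as an immediate combination of the compactness and lower semi-continuity parts of Proposition~\ref{le:firstproperties} with Lemma~\ref{le:CS}, which is precisely your argument. The one detail worth keeping explicit is the observation that $\Dkl(\nu_n\|\mu)<\infty$ for all sufficiently large $n$ (so the hypotheses of Lemma~\ref{le:CS} hold after discarding finitely many terms), which you have handled.
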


Thus, in particular, if ${\cal A}$ is the Gaussian class then the preceding
corollary applies.

\subsection{A Finite Dimensional Example}
\label{ssec:Examples}
In this subsection we illustrate the minimisation problem in the simplified situation where $\cH=\R^n$ for some $n\geq1$. In this situation it is natural to consider target measures $\mu$ of the form 
\begin{equation}\label{e:target2}
\frac{d \mu}{ d \cL^n}(x) = \frac{1}{Z_\mu}\exp \big( - \Phi(x) \big) ,
\end{equation}
for some smooth function $\Phi\colon \R^n \to \R_{+}$. Here $\cL^n$ denotes the Lebesgue  measure on $\R^n$.  We consider the minimisation
problem \eqref{e:problem1} in the case where $\cA$ is the set of all Gaussian measures on $\R^n$.

If $\nu = \cN(m,C)$ is a Gaussian on $\R^n$ with mean $m$ and a non-degenerate covariance matrix $C$ we get
\begin{align}
\Dnm &= \E^\nu\bigg[ \Phi(x) - \frac{\langle x,C^{-1}x \rangle }{2}  \bigg] - \frac12 \log \big( \det C\big) + \log \bigg(  \frac{Z_\mu}{(2\pi)^{\frac{n}{2}}}   \bigg) \notag\\
&= \E^\nu\big[ \Phi(x)\big] - \frac12 \log \big( \det C\big) - \frac{n}{2} + \log \bigg(  \frac{Z_\mu}{(2\pi)^{\frac{n}{2}}}   \bigg) \label{e:DKLfinitedim}.
\end{align}
The last two terms on the right hand side of \eqref{e:DKLfinitedim} do not depend on the Gaussian measure $\nu$ and can therefore be dropped in the minimisation problem. In the case where $\Phi$ is a polynomial the expression $\E^\nu\big[ \Phi(x)\big]$ consists of a Gaussian expectation of a polynomial and it can be evaluated explicitly. 

\begin{figure}
\begin{center}
\begin{tikzpicture}[xscale=0.03, yscale=4]

\draw[black, ->] (1,0) -- (201,0);
\draw[black, ->] (100,-0.2) -- (100,1.0);


\draw (40,0) node[below]{$-1$}; 
\draw (161,0) node[below]{$1$}; 

\draw (200,.8) node[right]{$\Phi$  };

\draw[black] plot file{DW.txt};

\end{tikzpicture}
\caption{The double well potential $\Phi$.}\label{fig:2}
\end{center}
\end{figure}
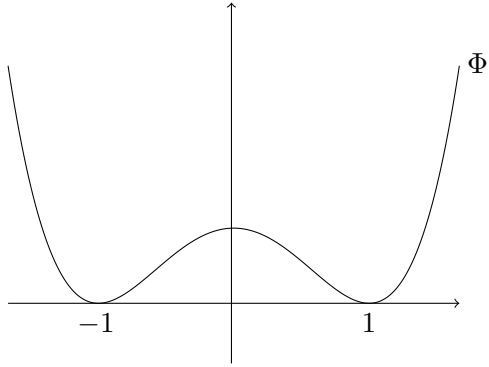

To be concrete we consider the case where $n=1$ and $\Phi(x) = \frac{1}{4 \eps} (x^2 -1)^2$  so that the measure $\mu$ has two peaks: see Figure \ref{fig:2}. 
In this one dimensional situation we minimise $\Dnm$ over all measures $\cN(m,\sigma^2),$  $m \in \R, \sigma \geq 0$.
Dropping the irrelevant constants in  \eqref{e:DKLfinitedim}, we are led to minimise
\begin{align}
\cD(m,\sigma) &:=  \E^{\cN(m,\sigma^2)}\big[ \Phi(x)\big] -  \log (  \sigma ) \notag\\
&= \Big( \Phi(m) + \frac{\sigma^2}{2} \Phi''(m)  +\frac{3 \sigma^4}{4!} \Phi^{(4)}(m)  \Big)   -  \log (  \sigma ) \notag\\
&= \frac{1}{\eps}\bigg( \frac{1}{4}(m^2-1)^2 +\frac{\sigma^2}{2} (3m^2-1)+ \frac{3 \sigma^4}{4} \bigg)   -  \log (  \sigma ). \notag
\end{align}

We expect, for small enough $\epsilon$, to find two different Gaussian
approximations, centred near $\pm 1.$ 
Numerical solution of the critical points of $\cD$ (see Figure \ref{fig:1}) 
confirms this intuition. In fact we see the existence of three, then five and finally one critical point
as $\epsilon$ increases. For
small $\epsilon$ the two minima near $x=\pm 1$ are the global minimisers,
whilst for larger $\epsilon$ the minimiser at the origin is the global minimiser.
\begin{figure}
\begin{center}
\includegraphics[width=10cm]{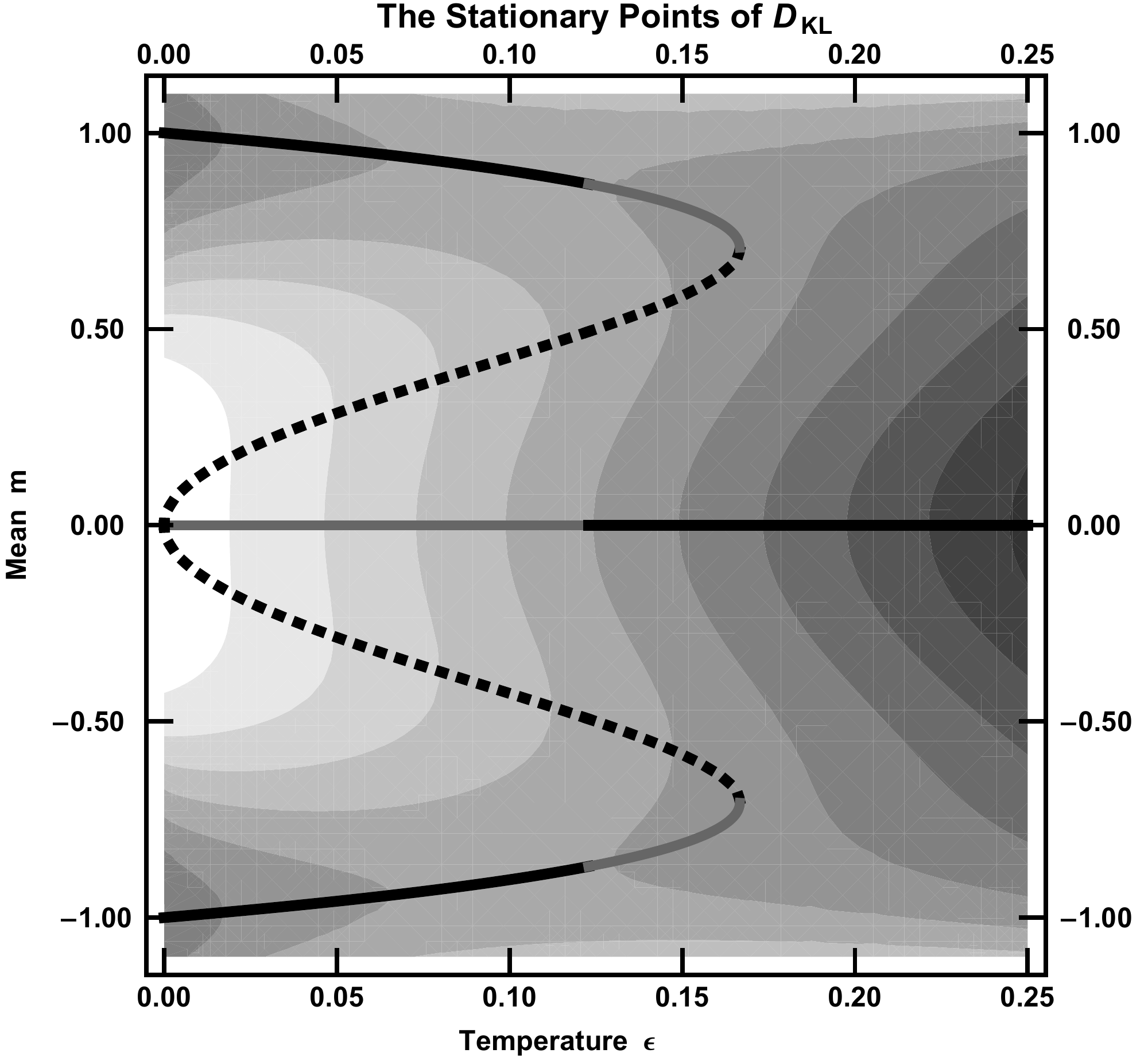}
\end{center}
\caption{In the figure solid lines denote minima, with the darker line used for the absolute minimum 
at the given temperature $\epsilon$. The dotted lines denote  maxima.
At $\epsilon=1/6$ two stationary points annihilate one another at a fold bifurction
and only the symmetric solution, with mean $m=0$, remains. However even for
$\epsilon > 0.122822$, the symmetric mean zero solution is the global minimum.}
\label{fig:1}
\end{figure}

\section{KL-Minimisation over Gaussian Classes}
\label{sec:abstract}

The previous subsection demonstrates that the class of Gaussian measures
is a natural one over which to minimise, although uniqueness cannot, in general,
be expected. In this section we therefore study approximation
within Gaussian classes, and variants on this theme. 
Furthermore we will assume that the measure of
interest, $\mu$, is  equivalent (in the sense of measures)
to a Gaussian $\mu_0=N(m_0,C_0)$ on the separable Hilbert space 
$(\cH, \la \cdot , \cdot \ra, \, \| \cdot \|)$, 
with $\cF$ the Borel $\sigma$-algebra. 

More precisely, let  $X\subseteq \cH$ be a separable Banach space which is 
continuously embedded in $\cH$,  where $X$ is measurable with 
respect to $\cF$ and satifies $\mu_0(X)=1$.
We also assume that $\Phi:X \to \R$ is continuous in the topology of $X$ 
 and that $\exp(-\Phi(x))$ is integrable with respect to $\mu_0$. 
\footnote{In fact continuity is only used in subsection \ref{ssec:uniqueness};
measurability will suffice in much of the paper.}
Then the target measure $\mu$ is defined by  
\begin{equation}
\frac{d \mu}{ d \mu_0} (x)= \frac{1}{Z_\mu} \exp \big( - \Phi(x)\big), \label{e:target}
\end{equation}
where the normalisation constant is given by 
\begin{equation}
Z_\mu = \int_{\cH} \exp \big( - \Phi(x)\big)\, \mu_0(dx) =: \E^{\mu_0} \big[  \exp \big( - \Phi(x)\big) \big].\notag
\end{equation}
Here and below we use the notation $\E^{\mu_0} $ for  the expectation with respect to the probability measure $\mu_0$, and we also use similar notation for 
expectation with respect to other probability measures.
Measures of the form (3.1) with $\mu_0$ Gaussian occur in the Bayesian
approach to inverse problems with Gaussian priors, and in the pathspace
description of (possibly conditioned) diffusions with additive noise.

In subsection \ref{ssec:GC} we recall some basic definitions concerning
Gaussian measure on Hilbert space and then state a straightforward
consequence of the theoretical developments of the previous section,
for ${\cal A}$ comprising various Gaussian classes. Then, in
subsection \ref{ssec:PGM}, we discuss how to parameterise the
covariance of a Gaussian measure, introducing Schr\"odinger potential-type
parameterisations of the precision (inverse covariance)
operator. By example we show that whilst Gaussian measures within
this parameterisation may exhibit well-behaved minimising sequences,
the potentials themselves may behave badly along minimising sequences,
exhibiting oscillations or singularity formation. This motivates
subsection \ref{ssec:REG} where we regularise the minimisation to
prevent this behaviour. In subsection \ref{ssec:uniqueness}
we give conditions on $\Phi$ which result in uniqueness of
minimisers and in subsection \ref{ssec:mixtures}  we make some
remarks on generalisations of approximation within the class of
Gaussian mixtures.

\subsection{Gaussian Case}
\label{ssec:GC}

We start by
recalling some basic facts about Gaussian measures. 
A probability measure $\nu$ on a separable Hilbert space $\cH$ is Gaussian if for any $\ph$ in the dual space $\cH^\star$ the push-forward measure $\nu \circ \ph^{-1}$ is Gaussian (where Dirac measures are viewed as Gaussians with variance $0$) \cite{ZDP}. 
Furthermore, recall that $\nu$ is characterised by its mean and covariance, 
defined via the following (in the first case Bochner) integrals: the mean $m$ is given by
\begin{equation*}
m := \int_{\cH} x \,  \nu(dx) \in \cH
\end{equation*}
and its covariance operator $C\colon \cH \to \cH$ satisfies
\begin{equation*}
\int_{\cH} \la x, y_1\ra \la x, y_2\ra  \, \nu(dx)  = \la y_1, C y_2 \ra,
\end{equation*} 
for all $y_1,y_2 \in \cH$. Recall that $C$ is a non-negative, symmetric, trace-class operator, or equivalently $\sqrt{C}$ is a non-negative, symmetric Hilbert-Schmidt operator. In the sequel we will denote by $\cL(\cH)$, $\TC$, and $\HS$ the spaces of linear, trace-class, and Hilbert-Schmidt operators on $\cH$. We denote the Gaussian measure with mean $m$ and covariance operator $C$ by $\cN(m,C)$. We have collected some additional facts about Gaussian measures in Appendix~\ref{A:Gaussian}.

From now on, we fix a Gaussian measure $\mu_0 = \cN(m_0,C_0)$. We always assume that $C_0$ is a strictly positive operator. We denote the image of  $\cH$ under $C_0^{\frac12}$, endowed with the scalar product $\la C_0^{-\frac12} \cdot, C_0^{-\frac12} \cdot \ra$, by $\cH^{1}$, noting that this is the \emph{Cameron-Martin space} of $\mu_0$; we denote its dual space by  $\cH^{-1} =\big( \cH^{1}\big)^{\star}$. We will make use of the natural finite dimensional projections associated to the operator $C_0$ in several places in the sequel and so we introduce 
notation associated with this for later use. Let $(\ea, \alpha \geq 1)$ be the basis of $\cH$ consisting of eigenfunctions of $C_0$, and let $(\lambda_\alpha, \alpha \geq 1)$ be the associated sequence of eigenvalues. For simplicity we assume that the eigenvalues are in non-increasing order. Then for any $\gamma \geq 1$ we will denote by $\Hg := \spa(e_1, \ldots , e_\gamma)$ and the orthogonal projection onto $\Hg$ by 
\begin{equation}\label{e:projectgamma}
\pg \colon \cH \to \cH, \qquad x \mapsto \sum_{\alpha =1}^{\gamma} \langle x , \ea \rangle \, \ea.
\end{equation}
 Given such a measure $\mu_0$ we assume  that the target measure $\mu$ is given by \eqref{e:target}.

For $\nu \ll \mu$ expression \eqref{e:KL} can be rewritten, using \eqref{e:target} and the equivalence of $\mu$ and $\mu_0$, as 
\begin{align}
\Dnm
&= \En \bigg[  \log \bigg( \frac{d \nu}{ d \mu}(x) \bigg) \mathbf{1}_{ \{ \frac{d \nu}{ d \mu} \neq 0 \}  } \bigg]   \notag\\
&= \En \bigg[  \log \bigg( \frac{d \nu}{ d \mu_0}(x) \times \frac{d\mu_0}{d\mu}(x) \bigg) \mathbf{1}_{ \{ \frac{d \nu}{ d \mu_0} \neq 0 \}  } \bigg]   \notag\\
&=  \En \bigg[  \log \bigg( \frac{d \nu}{ d \mu_0}(x) \bigg) \mathbf{1}_{ \big\{ \frac{d \nu}{ d \mu_0} \neq 0 \big\}}    \bigg] + \En\big[ \Phi(x) \big] +\log (Z_\mu) .\label{e:KL2}
\end{align}
The expression in the first line shows that in order to evaluate the Kullback-Leibler divergence it is sufficient to compute an expectation with respect to the approximating measure $\nu \in \cA$ and not with respect to the target $\mu$.

The same expression shows positivity. To see this decompose the measure $\mu$ into two non-negative measures $\mu = \mu^{\|}+ \mu^{\perp}$ where  $\mu^{\|}$ is equivalent to $\nu$ and $\mu^{\perp}$ is singular with respect to $\nu$. Then we can write  with the Jensen inequality
\begin{align*}
\Dnm =& - \En \bigg[  \log \bigg( \frac{d \mu^{\|}}{ d \nu}(x) \bigg)  \mathbf{1}_{ \{ \frac{d \nu}{ d \mu} \neq 0 \}  }  \bigg] \geq  - \log \En \bigg[   \frac{d \mu^{\|}}{ d \nu}(x)    \bigg] \\
=& - \log \mu^{\|} (\cH) \geq 0.
\end{align*}
This establishes the general fact that relative entropy is non-negative
for our particular setting.

Finally, the expression in the third line of \eqref{e:KL2} shows that the normalisation constant $Z_\mu$ enters into $\Dkl$ only  as an additive constant that can be ignored in the minimisation procedure. 

If we assume furthermore, that the set $\cA$ consists of Gaussian measures, Lemma~\ref{le:CS} and Corollary~\ref{cor:mini} imply the following result.
\begin{theorem}\label{thm:Gaussian}
Let $\mu_0$ be a Gaussian measure with mean $m_0 \in \cH$ and covariance operator $C_0 \in \TC$ and let $\mu$ be given by \eqref{e:target}. Consider the following choices for $\cA$ 
\begin{enumerate}
\item $\cA_1 = \{ \text{Gaussian measures on } \cH \}$,
\item $\cA_2 = \{ \text{Gaussian measures on } \cH \text{ equivalent to } \mu_0 \}$,
\item For a fixed covariance operator $\hat{C} \in \TC$ 
\begin{equation*}
 \cA_3 = \{ \text{Gaussian measures on } \cH \text{ with covariance } \hat{C} \},
\end{equation*}
\item For a fixed mean $\hat{m} \in \cH$
\begin{equation*}
 \cA_4 = \{ \text{Gaussian measures on } \cH \text{ with mean } \hat{m} \}.
\end{equation*}
\end{enumerate}
In each of these situations, as soon as there exists a single $\nu \in \cA_i$ with $\Dnm < \infty$ there exists a minimiser of $\nu \mapsto \Dnm$ in $\cA_i$. 
Furthermore $\nu$ is necessarily equivalent to $\mu_0$ in the sense of measures.
\end{theorem}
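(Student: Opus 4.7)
The strategy is to reduce everything to Corollary~\ref{cor:mini}, so the essential task for each $i$ is to verify that $\cA_i$ is closed under weak convergence of probability measures; the equivalence-to-$\mu_0$ conclusion will then be a corollary of the Feldman--Hajek theorem combined with the lower semicontinuity of $\Dkl$ from Proposition~\ref{le:firstproperties}. First I would check that the hypothesis of Corollary~\ref{cor:mini} is inherited: in each case $\cA_i$ is nonempty and the assumption of some $\nu \in \cA_i$ with $\Dnm<\infty$ is part of the theorem's hypotheses.

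The core lemma to establish is that the weak limit of a sequence of Gaussian measures on $\cH$ is again Gaussian. The argument proceeds by noting that weak convergence $\nu_n = \cN(m_n, C_n) \rightharpoonup \nu_\star$ forces pointwise convergence of the characteristic functionals
\begin{equation*}
\exp\Bigl(i \la \xi, m_n\ra - \tfrac{1}{2} \la \xi, C_n \xi\ra\Bigr) \longrightarrow \widehat{\nu_\star}(\xi)
\end{equation*}
for every $\xi \in \cH$. Standard arguments (taking real and imaginary parts, using tightness to control the means) show that $m_n \to m_\star$ in $\cH$ and that $\la \xi, C_n \xi\ra \to \la \xi, C_\star \xi\ra$ for every $\xi$, so that $\nu_\star = \cN(m_\star, C_\star)$ is Gaussian. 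This immediately yields that $\cA_1$ is weakly closed and, because the constraints ``$C=\hat C$'' and ``$m=\hat m$'' pass to the limit by pointwise convergence of means and quadratic forms, that $\cA_3$ and $\cA_4$ are weakly closed as well. Applying Corollary~\ref{cor:mini} then settles existence in cases 1, 3, and 4.

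For case 2, weak closedness of $\cA_2$ is more delicate since equivalence to $\mu_0$ need not be preserved under general weak limits of Gaussians; however we only need closedness along minimising sequences. Let $(\nu_n) \subset \cA_2$ be a minimising sequence; by Proposition~\ref{le:firstproperties} we may extract a weakly convergent subsequence $\nu_n \rightharpoonup \nu_\star$ and, by the previous paragraph, $\nu_\star$ is Gaussian. Lower semicontinuity of $\Dkl$ gives $\Dkl(\nu_\star\|\mu) < \infty$, hence $\nu_\star \ll \mu$; combined with the equivalence $\mu \sim \mu_0$ from \eqref{e:target}, this yields $\nu_\star \ll \mu_0$. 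Since $\nu_\star$ and $\mu_0$ are both Gaussian, the Feldman--Hajek dichotomy (two Gaussians on a separable Hilbert space are either equivalent or mutually singular) forces $\nu_\star \sim \mu_0$, placing $\nu_\star$ in $\cA_2$ and simultaneously proving the final assertion of the theorem in all four cases.

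The main obstacle is ensuring that a weakly convergent sequence of Gaussians with bounded KL divergence to $\mu$ actually converges to a Gaussian; this is where tightness and uniform control of second moments (via the KL bound and Fernique's theorem applied to a dominating Gaussian) enter, and where care must be taken that the convergence of $C_n$ is strong enough to identify the covariance of the limit in cases 3 and 4. Once this ingredient is in place the rest of the argument is a clean combination of Corollary~\ref{cor:mini} with Feldman--Hajek.
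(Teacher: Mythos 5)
Your proposal is correct and follows essentially the same route as the paper: reduce to Corollary~\ref{cor:mini} via weak closedness of the Gaussian classes (the content of Lemma~\ref{le:GaussianConv}, which you re-derive through characteristic functionals and one-dimensional marginals) and obtain equivalence to $\mu_0$ from finiteness of the divergence together with the Feldman--Hajek dichotomy. Your explicit treatment of $\cA_2$ --- observing that it is not weakly closed and instead arguing along minimising sequences that the weak limit satisfies $\nu_\star \ll \mu \sim \mu_0$ and hence is equivalent to $\mu_0$ --- correctly fills in a detail that the paper leaves implicit in Remark~\ref{rem:32}.
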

\begin{remark}
\label{rem:32}
Even in the case $\cA_1$ the condition that there exists a single $\nu$ with finite $\Dnm$ is not always satisfied. For example, if $\Phi(x) = \exp\big( \| x\|_{\cH}^4\big)$ then for  any Gaussian measure $\nu$ on $\cH$ we have, using
the identity \eqref{e:KL2}, that
\begin{equation*}
\Dnm = \Dkl(\nu \| \mu_0)    + \En\big[ \Phi(x) \big] +\log (Z_\mu) = +\infty.\label{e:KL2B}
\end{equation*}
In the cases $\cA_1, \cA_3$ and $\cA_4$ such a $\nu$ is necessarily
absolutely continuous with respect to $\mu$, and hence equivalent to $\mu_0$;
this equivalence is encapsulated directly in $\cA_2.$
The conditions for this to be possible are stated in the Feldman-Hajek Theorem, Proposition \ref{prop:FH}.
\end{remark}

\subsection{Parametrization of Gaussian Measures}
\label{ssec:PGM}

When solving the minimisation
problem \eqref{e:problem1} it will usually be convenient to parametrize the set $\cA$ in a suitable way. In the case where $\cA$ consists of all Gaussian measures on $\cH$ the first choice that comes to mind is to parametrize it by the mean $m \in \cH$ and the covariance operator $C  \in \TC$. 
In fact it is often convenient, for both computational and modelling reasons,
to work with the inverse covariance (precision) operator which, because
the covariance operator is strictly positive and trace-class, 
is a densely-defined unbounded operator.

Recall that the underlying Gaussian reference measure $\mu_0$ has 
covariance $C_0$. We will consider covariance operators $C$  of the form
\begin{equation}\label{e:defGamma}
C^{-1} = \Cni + \Gamma,
\end{equation}
for suitable operators $\Gamma$.  From an applications perspective it is interesting to consider the
case where $\cH$ is a function space and $\Gamma$ is
a mutiplication operator. Then $\Gamma$ has the form
$\Gamma u =v(\cdot) u(\cdot)$ for some fixed function $v$ which we refer
to as a {\em potential} in analogy with the Schr\"odinger setting. In this case parametrizing the Gaussian family
$\cA$ by the pair of functions $(m,v)$ comprises a considerable dimension 
reduction over parametrization by the pair $(m,C)$, since
$C$ is an operator. We develop the theory of the minimisation problem \eqref{e:problem1}
in terms of $\Gamma$ and extract results concerning the potential $v$ as
particular examples.

The end of Remark \ref{rem:32} shows that, without loss of generality, 
we can always restrict ourselves to covariance operators $C$ 
corresponding to Gaussian measures which are
equivalent to $\mu_0$.   In general the inverse $C^{-1}$ of such an operator and the inverse $\Cni$ of the covariance operator of $\mu_0$ do not have the same \emph{operator} domain. 
Indeed, see Example \ref{ex:delta} below for an example of two equivalent centred Gaussian measures whose inverse covariance operators have different domains. But item 1.) in the Feldman-Hajek Theorem (Proposition \ref{prop:FH}) implies that the domains of $C^{-\frac12}$ and $C^{-\frac12}_0$, i.e. the \emph{form domains} of $C^{-1}$ and $\Cni$, coincide. Hence, if we view the operators 
$C^{-1}$ and $\Cni$ as symmetric quadratic forms on $\cH^{1}$ or as operators from $\cH^1$ to $\cH^{-1}$ it makes sense to add and subtract them. In particular, we can interpret \eqref{e:defGamma} as
\begin{equation}\label{e:covariance}
 \Gamma:= C^{-1} - \Cni \in \LHo.
\end{equation}
Actually, $\Gamma$ is not only bounded from $\cH^1$ to $\cH^{-1}$.  Item 3.) in Proposition \ref{prop:FH} can be restated as  
\begin{equation}\label{e:FHH}
\big\| \Gamma  \big\|_{\mathcal{HS}(\cH^{1},\cH^{-1})}^2 :=  \big\| C_0^{\frac12} \Gamma C_0^{\frac12} \big\|_{\HS}^2     < \infty;
\end{equation}
here $\mathcal{HS}(\cH^{1},\cH^{-1})$ denotes the space of Hilbert-Schmidt 
operators from $\cH^1$ to $\cH^{-1}$.
The space $\mathcal{HS}(\cH^{1},\cH^{-1})$ is continuously
embedded into $ \LHo.$

Conversely, it is natural to ask if  condition \eqref{e:FHH} alone implies that $\Gamma$ can be obtained from the covariance of a Gaussian measure as in \eqref{e:covariance}. The following Lemma states that this is indeed the case as soon as one has an additional positivity condition; the proof is
left to the appendix.

\begin{lemma}\label{le:Gamma}
For any  symmetric $\Gamma$ in $\mathcal{HS}(\cH^{1},\cH^{-1})$ the quadratic form given by 
\begin{equation*}
Q_\Gamma(u,v) = \la u,\Cni v \ra + \la u, \Gamma v \ra,
\end{equation*}
is bounded from below and closed on its form domain $\cH^1$. Hence it is associated to a unique self-adjoint operator which we will also denote by $\Cni + \Gamma$. The operator $(\Cni + \Gamma)^{-1}$ is the covariance operator of a Gaussian measure on $\cH$ which is equivalent to $\mu_0$ if and only if $Q_\Gamma$ is  strictly positive.
\end{lemma}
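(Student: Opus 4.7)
The plan is to reduce the problem to the spectral analysis of $I+K$ on $\cH$, where $K:=C_0^{1/2}\Gamma C_0^{1/2}$. Under the isometry $u\mapsto v:=C_0^{-1/2}u$ from $\cH^1$ onto $\cH$, a direct computation gives
\[
Q_\Gamma(u,u)=\la v,(I+K)v\ra_\cH.
\]
By hypothesis \eqref{e:FHH}, $K$ is a symmetric Hilbert--Schmidt operator on $\cH$, hence in particular compact, so every structural question about $Q_\Gamma$ translates into a spectral question about $I+K$.

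For closedness and boundedness below I would apply the KLMN theorem, using the spectral projections of $C_0$. Let $\pi_n$ denote the orthogonal projection onto $\spa\{e_1,\ldots,e_n\}$ and set $K_n:=\pi_n K\pi_n$. Since $K$ is compact and $\pi_n\to I$ strongly, $\|K-K_n\|_{\cL(\cH)}\to 0$. The range of $K_n$ lies in $\cH^1$, and the identity $\la v,e_i\ra_\cH=\lambda_i^{-1/2}\la u,e_i\ra_\cH$ (valid because $e_i$ is an eigenvector of $C_0$ with eigenvalue $\lambda_i>0$) lets me expand
\[
|\la v,K_n v\ra|=\Big|\sum_{i,j\leq n}\la e_i,Ke_j\ra\,\lambda_i^{-1/2}\lambda_j^{-1/2}\la u,e_i\ra\la u,e_j\ra\Big|\leq C_n\|u\|_\cH^2,
\]
for a constant $C_n$ depending only on $K$ and $\lambda_1,\ldots,\lambda_n$. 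Combined with the tail bound $|\la v,(K-K_n)v\ra|\leq\|K-K_n\|\,\|v\|^2=\|K-K_n\|\,\la u,\Cni u\ra$, this yields the form-smallness estimate
\[
|\la u,\Gamma u\ra|\leq\epsilon\la u,\Cni u\ra+C_\epsilon\|u\|_\cH^2\qquad\forall u\in\cH^1,
\]
for any $\epsilon>0$, which is the KLMN hypothesis. The KLMN theorem then delivers $Q_\Gamma$ as a closed, bounded-below form on $\cH^1$, and the first representation theorem associates the self-adjoint operator we denote by $\Cni+\Gamma$.

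For the equivalence characterization, compactness of $K$ implies $\sigma(I+K)=\{1\}\cup\{1+\lambda_i\}$ with $\lambda_i\to 0$, so strict positivity of $Q_\Gamma$ -- equivalently of $I+K$ -- is equivalent to $\inf\sigma(I+K)=c>0$, and then $(I+K)^{-1}$ is bounded on $\cH$. Consequently
\[
(\Cni+\Gamma)^{-1}=C_0^{1/2}(I+K)^{-1}C_0^{1/2}
\]
is a composition of two Hilbert--Schmidt operators with a bounded one, hence trace class and positive -- a legitimate Gaussian covariance -- and
\[
C_0^{-1/2}(\Cni+\Gamma)^{-1}C_0^{-1/2}-I=-K(I+K)^{-1}\in\mathcal{HS}(\cH),
\]
so Proposition \ref{prop:FH} yields equivalence with $\mu_0$ (the two Cameron--Martin spaces agreeing follows from $(I+K)^{-1}$ being bounded with bounded inverse). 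Conversely, if $(\Cni+\Gamma)^{-1}$ is the positive trace-class covariance of a Gaussian, then $\Cni+\Gamma$ is itself strictly positive as a self-adjoint operator, and hence $Q_\Gamma$ is strictly positive.

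The main technical obstacle is the form-smallness estimate: the naive bound $|\la v,Kv\ra|\leq\|K\|_{\cL(\cH)}\|v\|^2$ gives only relative bound $\|K\|$, which may exceed $1$ and preclude KLMN. Exploiting the compactness of $K$ together with the eigenbasis of $C_0$ to extract an arbitrarily small relative bound (at the cost of the lower-order $\|u\|_\cH^2$ term) is where the Hilbert--Schmidt hypothesis on $\Gamma$ is genuinely used, beyond mere boundedness of $\Gamma\colon\cH^1\to\cH^{-1}$.
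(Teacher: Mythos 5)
Your proof is correct and follows essentially the same route as the paper's: a finite-block/small-tail decomposition in the eigenbasis of $C_0$ to obtain infinitesimal form-boundedness, then KLMN, then trace-class of the inverse by sandwiching with $C_0^{\frac12}$ and an appeal to the Feldman--Hajek theorem. The only difference is cosmetic packaging --- you conjugate to $K=C_0^{1/2}\Gamma C_0^{1/2}$ and use compactness of $K$ where the paper manipulates the matrix entries $\Gamma_{\alpha,\beta}$ and the Hilbert--Schmidt tail directly, and your explicit verification of Feldman--Hajek condition 3 via $-K(I+K)^{-1}$ is in fact slightly more detailed than the paper's.
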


Lemma \ref{le:Gamma} shows  that
we can parametrize the set of Gaussian measures that are equivalent to $\mu_0$ by their mean and by the operator $\Gamma$. For fixed $m \in \cH$ and $\Gamma \in \mathcal{HS}(\cH^{1},\cH^{-1})$ 
we write $\cNp(m,\Gamma)$ for the Gaussian measure with mean $m$ and covariance operator given by  $C^{-1}= \Cni+\Gamma$, where the suffix $(P,0)$ is to denote the specifiction  via the 
shift in precision operator from that of $\mu_0.$
We use the convention to set $\cNp(m,\Gamma) = \delta_m$ if $\Cni+\Gamma$ 
fails to be positive. Then we set
\begin{equation}\label{e:defA}
\cA := \{\nmG \in \Me \colon m \in \cH, \, \Gamma \in \mathcal{HS}(\cH^{1},\cH^{-1}) \}.
\end{equation}

Lemma \ref{le:Gamma} shows that the subset of $\cA$ in which $Q_\Gamma$ is
stricly positive comprises Gaussians measures absolutely continuous
with respect to $\mu_0.$ Theorem \ref{thm:Gaussian}, with the choice
$\cA=\cA_2$,  implies immediately the existence of a minimiser for problem \eqref{e:problem1} for this choice of $\cA$:

\begin{corollary}\label{cor:Gaussian}
Let $\mu_0$ be a Gaussian measure with mean $m_0 \in \cH$ and covariance operator $C_0 \in \TC$ and let $\mu$ be given by \eqref{e:target}. 
Consider $\cA$ given by
\eqref{e:defA}. 
Provided there exists a single $\nu \in \cA$ with $\Dnm < \infty$ 
then there exists a minimiser of $\nu \mapsto \Dnm$ in $\cA$. 
Furthermore, $\nu$ is necessarily equivalent to $\mu_0$ in the sense of measures.
\end{corollary}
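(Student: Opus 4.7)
The plan is to reduce the minimisation over the parametrised set $\cA$ in \eqref{e:defA} to the minimisation over $\cA_2$ (Gaussian measures on $\cH$ equivalent to $\mu_0$) from Theorem \ref{thm:Gaussian}, and then invoke that theorem directly. The key point is that although $\cA$ is larger than $\cA_2$ in the sense that it contains Dirac measures $\delta_m$ (when $Q_\Gamma$ fails to be positive) and even Gaussians $\nmG$ whose mean shift $m-m_0$ does not lie in the Cameron-Martin space $\cH^1$, all such ``bad'' elements have infinite KL divergence to $\mu$ and are therefore irrelevant to the infimum.

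First I would verify the set-theoretic relationship. By Lemma~\ref{le:Gamma}, whenever the quadratic form $Q_\Gamma$ is strictly positive, the operator $C_0^{-1}+\Gamma$ is the precision operator of a Gaussian measure on $\cH$ absolutely continuous with respect to $\mu_0$, i.e.\ equivalent to it. Conversely, by the Feldman--Hajek Theorem (Proposition~\ref{prop:FH}), any Gaussian measure equivalent to $\mu_0$ has mean in $m_0+\cH^1$ and an operator $\Gamma = C^{-1}-C_0^{-1}$ lying in $\mathcal{HS}(\cH^1,\cH^{-1})$. Hence
\begin{equation*}
\cA_2 = \{\nmG \colon m-m_0 \in \cH^1,\ \Gamma \in \mathcal{HS}(\cH^1,\cH^{-1}),\ Q_\Gamma \text{ strictly positive}\} \subseteq \cA.
\end{equation*}

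Next I would show that any $\nu \in \cA$ with $\Dnm<\infty$ must actually lie in $\cA_2$. Indeed, $\Dnm<\infty$ implies $\nu \ll \mu$ and, since $\mu$ is equivalent to $\mu_0$ by \eqref{e:target}, also $\nu \ll \mu_0$. In particular $\nu$ cannot be a Dirac measure (which would be singular to the full-support Gaussian $\mu_0$), so $\nu = \nmG$ with $Q_\Gamma$ strictly positive; by the Feldman--Hajek dichotomy, $\nu$ is then equivalent to $\mu_0$, i.e.\ $\nu \in \cA_2$. It follows that
\begin{equation*}
\inf_{\nu \in \cA}\Dnm = \inf_{\nu \in \cA_2}\Dnm,
\end{equation*}
and any minimiser over $\cA_2$ is automatically a minimiser over $\cA$.

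To close the argument I would apply Theorem~\ref{thm:Gaussian} in the case $\cA=\cA_2$: the hypothesis that there exists $\nu\in\cA$ with $\Dnm<\infty$ yields, by the previous step, a $\nu\in\cA_2$ with $\Dnm<\infty$, which is precisely what Theorem~\ref{thm:Gaussian} requires to produce a minimiser $\nu_\star\in\cA_2$. By construction $\nu_\star\in\cA_2\subseteq\cA$, and the equivalence of $\nu_\star$ to $\mu_0$ is built into membership in $\cA_2$. The only mildly delicate point is the preliminary identification of $\cA_2$ with the image of the $(m,\Gamma)$-parametrisation, for which the combination of Lemma~\ref{le:Gamma} with the Feldman--Hajek characterisation suffices; no further compactness or lower semicontinuity argument is needed here since these are already subsumed by Theorem~\ref{thm:Gaussian}.
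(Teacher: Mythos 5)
Your proposal is correct and follows essentially the same route as the paper, which obtains the corollary by combining Lemma~\ref{le:Gamma} (identifying the subset of $\cA$ with $Q_\Gamma$ strictly positive as Gaussians equivalent to $\mu_0$) with Theorem~\ref{thm:Gaussian} applied to $\cA_2$. Your write-up is in fact slightly more careful than the paper's one-line justification, since you explicitly dispose of the elements of $\cA$ whose mean shift $m-m_0$ lies outside $\cH^1$ as well as the Dirac measures, both of which have infinite divergence and so do not affect the infimum.
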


However this corollary does not tell us much about the manner in
which minimising sequences approach the limit.
With some more work we can actually characterize the convergence more
precisely in terms of the parameterisation:

\begin{theorem}\label{thm:Gammaconv}
Let $\mu_0$ be a Gaussian measure with mean $m_0 \in \cH$ and covariance operator $C_0 \in \TC$ and let $\mu$ be given by \eqref{e:target}. 
Consider $\cA$ given by \eqref{e:defA}.
Let $\cNp(m_n,\Gn)$ be a sequence of Gaussian measures in $\cA$
that converge weakly to $\ns$ with 
\begin{equation*}
\Dkl(\nu_n\| \mu) \to \Dkl(\ns \| \mu).
\end{equation*}
Then $\ns = \cNp(m_\star, \Gs)$ and  
\begin{equation*}
\|m_n-m_\star\|_{\cH^1}+\big\| \Gn - \Gs\big\|_{ \mathcal{HS}(\cH^{1},\cH^{-1})} \to 0.
\end{equation*}
\end{theorem}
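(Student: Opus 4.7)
The plan is to use Lemma~\ref{le:CS} to upgrade the hypothesised convergence to convergence in total variation, to identify $\ns$ as a Gaussian measure equivalent to $\mu_0$ so that Lemma~\ref{le:Gamma} supplies its parameters $(m_\star,\Gs)$, and then to reduce the two desired norm convergences to the single statement $\Dkl(\nu_n\|\ns)\to 0$, which is read off via the explicit Gaussian Kullback-Leibler formula. First, Lemma~\ref{le:CS} gives $\|\nu_n-\ns\|_{{\rm tv}}\to 0$. For each $\phi\in\cH^{\star}$ the push-forwards $\nu_n\circ\phi^{-1}$ form a weakly convergent sequence of one-dimensional Gaussians, so $\ns\circ\phi^{-1}$ is Gaussian; hence $\ns$ itself is Gaussian on $\cH$. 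Since $\Dkl(\ns\|\mu)<\infty$ one has $\ns\ll\mu\sim\mu_0$, and the Feldman-Hajek dichotomy (Proposition~\ref{prop:FH}) forces $\ns\sim\mu_0$. Lemma~\ref{le:Gamma} then supplies $m_\star\in\cH^{1}$ (the mean of a Gaussian equivalent to $\mu_0$ lies in its Cameron-Martin space) and $\Gs\in\mathcal{HS}(\cH^{1},\cH^{-1})$ with $\ns=\cNp(m_\star,\Gs)$.

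Next I would establish $\Dkl(\nu_n\|\ns)\to 0$. Writing $d\nu_n/d\ns=(d\nu_n/d\mu)\cdot(d\ns/d\mu)^{-1}$ and using \eqref{e:KL2} applied to $\ns$ yields
\begin{equation*}
\Dkl(\nu_n\|\ns)=\Dkl(\nu_n\|\mu)-\E^{\nu_n}\!\Big[\log\tfrac{d\ns}{d\mu_0}\Big]-\log Z_\mu-\E^{\nu_n}\!\big[\Phi\big].
\end{equation*}
The first term converges to $\Dkl(\ns\|\mu)$ by hypothesis. The density $\log(d\ns/d\mu_0)$ is a quadratic-plus-linear functional on $\cH$ by the Feldman-Hajek density formula, and $\Phi$ is continuous on $X$. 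I would pass to the limit in both expectations using total variation convergence together with uniform Gaussian moment bounds of Fernique type, which are available once $\|m_n\|_{\cH^{1}}$ and $\|\Gn\|_{\mathcal{HS}(\cH^{1},\cH^{-1})}$ are shown to be uniformly bounded; this in turn follows from uniform boundedness of $\Dkl(\nu_n\|\mu_0)$ via the explicit Gaussian Kullback-Leibler formula. The limit then collapses the right-hand side to $\Dkl(\ns\|\mu)-\E^{\ns}[\log(d\ns/d\mu_0)]-\log Z_\mu-\E^{\ns}[\Phi]=0$.

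For the final step, the explicit formula for the Kullback-Leibler divergence between two Gaussians equivalent to $\mu_0$ reads, relative to $\ns$ as reference,
\begin{equation*}
\Dkl(\nu_n\|\ns)=\tfrac12\|m_n-m_\star\|_{\cH_\star^{1}}^{2}+\tfrac12\,\mathrm{tr}\bigl[\,f\bigl(C_\star^{1/2}(\Gn-\Gs)C_\star^{1/2}\bigr)\bigr],
\end{equation*}
where $\|\cdot\|_{\cH_\star^{1}}$ is the Cameron-Martin norm of $\ns$, $f(x)=\log(1+x)-x/(1+x)\ge 0$, and $f(x)\ge c\,x^{2}$ on any bounded subinterval of $(-1,\infty)$. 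The norms $\|\cdot\|_{\cH_\star^{1}}$ and $\|\cdot\|_{\cH^{1}}$ are comparable, as are the Hilbert-Schmidt norms based on $C_\star^{1/2}$ and $C_0^{1/2}$, because $C_\star^{-1}-\Cni=\Gs\in\mathcal{HS}(\cH^{1},\cH^{-1})$. Hence $\Dkl(\nu_n\|\ns)\to 0$ yields $\|m_n-m_\star\|_{\cH^{1}}\to 0$ and, after noting that $\sum_j\lambda_{n,j}^{2}\to 0$ for the eigenvalues $\lambda_{n,j}$ of $C_\star^{1/2}(\Gn-\Gs)C_\star^{1/2}$, also $\|\Gn-\Gs\|_{\mathcal{HS}(\cH^{1},\cH^{-1})}\to 0$. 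The principal obstacle is the uniform-integrability step of the previous paragraph, which requires extracting sharp Fernique moment bounds from the bare Kullback-Leibler data and is the point where weak convergence alone is not enough.
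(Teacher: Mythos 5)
Your opening moves (total variation convergence via Lemma~\ref{le:CS}, Gaussianity of the limit, identification of $(m_\star,\Gs)$ via Feldman--Hajek and Lemma~\ref{le:Gamma}) match the paper. The divergence, and the gap, lies in your central step: you reduce everything to $\Dkl(\nu_n\|\ns)\to 0$, and you propose to obtain this by passing to the limit in $\E^{\nu_n}[\Phi]$ and $\E^{\nu_n}[\log(d\ns/d\mu_0)]$. This is where the argument breaks. The theorem makes no upper growth assumption on $\Phi$ (the bound \eqref{e:PhiBo} is only imposed later, for Theorem~\ref{thm:uniqueness}); here $\Phi$ is merely continuous on $X$ with $e^{-\Phi}\in L^1(\mu_0)$. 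Finiteness of each $\E^{\nu_n}[\Phi]$ gives no uniform integrability, and no Fernique-type bound on the Gaussians $\nu_n$ can compensate for an arbitrarily fast-growing $\Phi$. Your fallback --- deducing uniform bounds on $\|m_n\|_{\cH^1}$ and $\|\Gamma_n\|_{\mathcal{HS}(\cH^1,\cH^{-1})}$ from uniform boundedness of $\Dkl(\nu_n\|\mu_0)$ --- is also circular, since $\Dkl(\nu_n\|\mu_0)=\Dkl(\nu_n\|\mu)-\E^{\nu_n}[\Phi]-\log Z_\mu$, so controlling it again requires controlling $\E^{\nu_n}[\Phi]$. Note also that $\Dtv(\nu_n,\ns)\to 0$ together with $\Dkl(\nu_n\|\mu)\to\Dkl(\ns\|\mu)$ does not in general imply $\Dkl(\nu_n\|\ns)\to 0$: Kullback--Leibler divergence is not controlled by total variation from above.

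The paper avoids all of this by replacing the Kullback--Leibler divergence between the two Gaussians with the Hellinger integral $H(\nu_n;\ns)$ (Lemma~\ref{le:FH}). Since $\DHel(\nu_n;\ns)^2\le\Dtv(\nu_n,\ns)$, total variation convergence alone forces $H(\nu_n;\ns)\to 1$, with no integrability of $\Phi$ ever entering. The Hellinger integral of two Gaussians restricted to finite-dimensional $\sigma$-algebras is computed explicitly and is monotone under restriction (Lemma~\ref{le:Hellinger}), and $-\log H$ plays exactly the role you wanted $\Dkl(\nu_n\|\ns)$ to play: it dominates, uniformly in the dimension of the projection, a sum over eigenvalues of a nonnegative function that is quadratic near its zero, which yields the Hilbert--Schmidt and $\cH^1$ convergences. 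Your final paragraph on the explicit Gaussian formula is sound in spirit --- it is precisely the Hellinger analogue of it that the paper runs --- but the bridge you build to reach $\Dkl(\nu_n\|\ns)\to 0$ does not hold under the stated hypotheses. If you replace the Kullback--Leibler divergence by $-\log H(\nu_n;\ns)$ throughout your last two paragraphs, the proof closes.
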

\begin{proof}
Lemma  \ref{le:GaussianConv} shows that $\nu_\star$ is Gaussian
and Theorem \ref{thm:Gaussian} that in fact $\nu_\star=\cNp(m_\star, \Gs)$.
It follows from Lemma \ref{le:CS} that $\nu_n$ converges to $\nu_\star$
in total variation.  Lemma \ref{le:FH} which follows shows that
\begin{equation*}
\big\| \Cs^{\frac12} \big( C_n^{-1} - \Cs^{-1}\big) \Cs^{\frac12} \big\|_{\HS}
+\| m_n - m_\star \|_{\cH^1} \to 0.
\end{equation*} 
By Feldman-Hajek Theorem (Proposition \ref{prop:FH}, item 1.)) 
the Cameron-Martin spaces $\Cs^{\frac12}\cH$
and $C_0^{\frac12}\cH$ coincide with $\cH^1$ and hence, since
$C_n^{-1} - \Cs^{-1}=\Gamma_n-\Gamma_{\star}$, the desired result follows.
\end{proof}

The following example concerns a subset of
the set $\cA$ given by \eqref{e:defA} found by writing
$\Gamma$ a multiplication by a constant. This structure 
is useful for numerical computations, for example if
$\mu_0$ represents Wiener measure (possibly conditioned)
and we seek an approximation $\nu$ to $\mu$ with a 
mean $m$ and covariance of Ornstein-Uhlenbeck type (again possibly
conditioned).

\begin{example} 
Let $C^{-1}=C_0^{-1}+\beta I$ so that
\begin{equation}
C=(I+\beta C_0)^{-1}C_0.
\label{eq:Cb}
\end{equation}
Let $\cA'$ denote the set
of Gaussian measures on $\cH$ which 
have covariance of the form \eqref{eq:Cb} 
for some constant $\beta \in \R$. This set is parameterized by
the pair $(m,\beta) \in \cH \times \R$. Lemma \ref{le:Gamma} above states that 
$C$ is the covariance of a Gaussian equivalent to $\mu_0$ 
if and only if $\beta \in
\Is=(-\lambda_1^{-1},\infty)$; recall that $\lambda_1$, defined above \eqref{e:projectgamma} is the largest eigenvalue of $C_0$. 
Note also that the covariance $C$ satisfies $C^{-1}=C_0^{-1}+\beta$
and so $\cA'$ is a subset of $\cA$ given by \eqref{e:defA}
arising where $\Gamma$ is multiplication by a constant. 

Now consider minimising sequences $\{\nu_n\}$ from $\cA'$ for $\Dkl(\nu\|\mu).$
Any weak
limit $\ns$ of a sequence $\nu_n=N\bigl(m_n,(I+\beta_n C_0)^{-1}C_0\big) 
\in \cA'$ is necessarily Gaussian by Lemma \ref{le:GaussianConv}, 1.) 
and we denote it by $N(m_\star,\Cs).$ By 2.) of the same lemma we deduce
that $m_n \to m_\star$ strongly in $\cH$ and by 3.) that 
$(I+\beta_n C_0)^{-1}C_0 \to \Cs$ strongly in ${\cal L}(\cH).$
Thus, for any $\alpha\geq 1$, and recalling that $\ea$ are the eigenvectors
of $C_0$,
$\|\Cs\ea-(1+\beta_n\lambda_\alpha)^{-1}\lambda_\alpha \ea\| \to 0$
as $n \to \infty.$
Furthermore, necessarily $\beta_n \in \Is$ for each $n$. We now argue
by contradiction that there are no subsequences $\beta_{n'}$ converging
to either $-\lambda_1^{-1}$ or $\infty.$ For contradiction assume
first that there is a subsequence converging to $-\lambda_1^{-1}$.
Along this subsequence we have $(1+\beta_n\lambda_1)^{-1} \to \infty$
and hence we deduce that $\Cs e_1=\infty$, so that
$\Cs$ cannot be trace-class, a contradiction. 
Similarly assume for
contradiction that there is a subsequence converging to $\infty.$
Along this subsequence we have $(1+\beta_n\lambda_\alpha)^{-1} \to 0$
and hence that $\Cs \ea=0$ for every $\alpha$. In this case $\ns$ would be a 
Dirac measure, and hence not equivalent to  $\mu_0$ (recall our assumption that $C_0$ is a strictly 
positive operator). Thus there must be a subsequnce
converging to a limit $\beta \in \Is$ and we deduce that
$\Cs \ea=(1+\beta\lambda_\alpha)^{-1}\lambda_\alpha \ea$ proving
that $\Cs =(I+\beta C_0)^{-1}C_0$ as required. 
\end{example}

Another class of Gaussian which is natural in applications, and
in which the parameterization of the covariance is finite dimensional,
is as follows.

\begin{example}
Recall the notation $\pg$ for the orthogonal projection onto 
$\Hg := \spa(e_1, \ldots , e_\gamma)$ the span of the first
$\gamma$ eigenvalues of $C_0.$ We seek $C$ in the form
$$C^{-1}=\bigl((I-\pg)C_0(I-\pg)\bigr)^{-1}+\Gamma$$
where
$$\Gamma=\sum_{i,j \le N} \gamma_{ij} e_i \otimes e_j.$$
It then follows that
\begin{equation}
C=(I-\pg)C_0(I-\pg)+\Gamma^{-1},
\label{eq:Cbb}
\end{equation}
provided that $\Gamma$ is invertible.
Let $\cA'$ denote the set
of Gaussian measures on $\cH$ which 
have covariance of the form \eqref{eq:Cbb} 
for some operator $\Gamma$ invertible on $\Hg.$
Now consider minimising sequences $\{\nu_n\}$ from $\cA'$ for $\Dkl(\nu\|\mu)$
with mean $m_n$ and covariance $C_n=(I-\pg)C_0(I-\pg)+\Gamma_n^{-1}.$
Any weak limit $\ns$ of the sequence $\nu_n \in \cA'$ is necessarily Gaussian by Lemma \ref{le:GaussianConv}, 1.) 
and we denote it by $N(m_\star,\Cs).$ As in the preceding example,
we deduce that $m_n \to m_\star$ strongly in $\cH$. Similarly we also
deduce that $\Gamma_n^{-1}$  converges to a non-negative matrix.
A simple contradiction shows that, in fact, 
this limiting matrix is invertible since
otherwise $N(m_\star,\Cs)$  would not be equivalent to $\mu_0$. We denote 
the limit by $\Gamma_{\star}^{-1}$. We deduce that the limit
of the sequence $\nu_n$ is in $\cA'$ and that 
$\Cs=(I-\pg)C_0(I-\pg)+\Gamma_{\star}^{-1}.$
\end{example}

\subsection{Regularisation for Parameterisation of Gaussian Measures}
\label{ssec:REG}

The previous section demonstrates that parameterisation of Gaussian
measures in the set $\cA$ given by \eqref{e:defA} leads to 
a well-defined minimisation problem \eqref{e:problem1} and that,
furthermore, minimising sequences in $\cA$ will give rise to
means $m_n$ and operators $\Gamma_n$ converging in $\cH^1$
and $ \mathcal{HS}(\cH^{1},\cH^{-1})$ respectively. However,
convergence in the space $ \mathcal{HS}(\cH^{1},\cH^{-1})$ 
may be quite weak and unsuitable for numerical purposes;
in particular if $\Gamma_n u=v_n(\cdot)u(\cdot)$ then the
sequence $(v_n)$ may behave quite badly, even though $(\Gamma_n)$ is
well-behaved in $ \mathcal{HS}(\cH^{1},\cH^{-1})$. 
For this reason we consider, in this subsection,
regularisation of the minimisation problem
\eqref{e:problem1} over $\cA$ given by \eqref{e:defA}. But before doing so
we provide two examples illustrating the potentially 
undesirable properties of convergence
in $ \mathcal{HS}(\cH^{1},\cH^{-1})$.
\def\RS{ \cite[Example 3 in Section X.2]{RS2}}

\begin{example}[Compare \RS ]\label{ex:delta}
Let $\Cni = - \partial_t^2$ be the negative Dirichlet-Laplace operator on $[-1,1]$ 
with domain $H^2([-1,1]) \cap H^1_0([-1,1]),$   
and let $\mu_0= \cN(0,C_0)$, i.e. $\mu_0$ is the distribution of a Brownian bridge on $[-1,1]$. In this case $\cH^1$ coincides with the Sobolev space $H^1_0$. We note that the measure $\mu_0$ assigns  full mass to the space 
$X$ of continuous functions on $[-1,1]$ and 
hence all integrals with respect to
$\mu_0$ in what follows can be computed over $X$.
Furthermore, the centred unit ball in $X$, 
\begin{equation*}
B_{X}(0;1) := \Big\{x \in X\colon \sup_{t\in [-1,1]} |x(t)| \leq 1\Big\},
\end{equation*}
has
positive $\mu_0$ measure.

Let $\ph\colon \R \to \R$ be a standard mollifier, i.e. $\ph \in \cC^\infty$, 
$\phi \geq 0$, $\ph$ is compactly supported in $[-1,1]$ and $\int_\R \ph(t) \, dt =1$. Then for any $n$ define $\ph_n(t) = n \ph(t n)$, 
together with the probability measures $\nu_n \ll \mu_0$ given by by 
\begin{equation*}
\frac{d \nu_n}{ d \mu_0} (x(\cdot) ) =\frac{1}{Z_n} \exp \Big(- \frac12 \int_{-1}^1 \ph_n(t) \, x(t)^2 \, dt  \Big),
\end{equation*} 
where
\begin{equation*}
Z_n:= \E^{\mu_0}  \exp \Big(- \frac12 \int_{-1}^1 \ph_n(t) \, x(t)^2 \, dt  \Big).
\end{equation*}
The $\nu_n$ are also Gaussian, as Lemma \ref{le:PI} shows.
Using the fact that $\mu_0(X)=1$ it follows that 
\begin{equation*}
\exp(-1/2)\mu_0\bigl(B_{X}(0;1)\bigr) \le Z_n \le 1.
\end{equation*}
Now define probability measure $\nu_{\star}$ by
\begin{equation*}
\frac{d \ns}{ d \mu_0} (x(\cdot) ) = \frac{1}{Z_\star} \exp \bigg(- \frac{ x(0)^2}{2}  \bigg)   
\end{equation*}
noting that
\begin{equation*}
\exp(-1/2)\mu_0\bigl(B_{X}(0;1)\bigr) \le Z_{\star} \le 1.
\end{equation*}
For any  $x \in X$ we have
\begin{equation*}
\int_{-1}^1 \ph_n(t) \, x(t)^2 \, dt \to  x(0)^2 .
\end{equation*}
An application of the dominated convergence theorem shows that
$Z_n \to Z_\star$ and hence that $Z_n^{-1} \to Z_\star^{-1}$ and 
$\log(Z_n) \to \log(Z_\star).$

Further applications of the dominated convergence theorem show
that the $\nu_n$ converge weakly to $\ns$, which is also then Gaussian
by Lemma \ref{le:GaussianConv}, and that the
the Kullback-Leibler divergence between $\nu_n$ and $\nu_{\star}$
satisfies 
\begin{align}
\Dkl(\nu_n \|\ns) &=\frac{1}{Z_n} \E^{\mu_0} \Big[ \exp \Big(- \frac12 \int_{-1}^1 \ph_n(t) \, x(t)^2 \, dt  \Big)\notag \\
&   \times \frac12\Big(x(0)^2- \int_{-1}^1 \ph_n(t) \, x(t)^2 \, dt\Big) \Big] +\big( \log(Z_\star) - \log(Z_n)\big)
& \to 0.\notag
\end{align}
Lemma \ref{le:PI} shows that $\nu_n$ is the centred Gaussian
with covariance $C_n$ given by $C_n^{-1}=C_0^{-1}+\ph_n.$
Formally, the covariance operator associated to $\ns$ is given by $\Cni  + \delta_0$, where $\delta_0$ is the Dirac $\delta$ function. 
Nonetheless
the implied mutiplication operators converge to a limit in 
$ \mathcal{HS}(\cH^{1},\cH^{-1})$. 
In applications such limiting behaviour of the potential in an inverse
covariance representation, to a distribution, may be computationally
undesirable.
\end{example}

\begin{example}\label{ex:averaging}
We consider a second example in a similar vein, but linked to the theory of
averaging for differential operators. Choose $\mu_0$ as in
the preceding example and now
define $\vvarphi_n(\cdot)=\vvarphi(n\cdot)$ where $\vvarphi:\R \to \R$ 
is a positive smooth $1-$periodic function with mean $\bvp$.
Define $C_n$ by $C_n^{-1} = \Cni + \pph_n$ similarly to before. It follows, as in the previous example, by use of Lemma \ref{le:PI},
 that the measures $\nu_n$ are centred Gaussian with covariance $C_n$,
are equivalent to $\mu_0$ and
\begin{equation*}
\frac{d \nu_n}{ d \mu_0} (x(\cdot) ) =\frac{1}{Z_n}\exp \Big(- \frac12 \int_{-1}^1 \pph_n(t) \, x(t)^2 \, dt  \Big). 
\end{equation*} 
By the dominated convergence theorem, as in the previous example,
it also follows that the $\nu_n$ converge weakly to $\ns$ with 
\begin{equation*}
\frac{d \ns}{ d \mu_0} (x(\cdot) ) = \frac{1}{Z_\star}\exp \bigg(- \frac12 \bvp \int_{-1}^1  x(t)^2\,dt  \bigg)  . 
\end{equation*}
Again using Lemma \ref{le:PI}, $\ns$ is the centred Gaussian with
covariance $C_{\star}$ given by $C_{\star}^{-1}=C_0^{-1}+\bvp.$
The Kullback-Leibler divergence satisfies 
$\Dkl(\nu_n \|\ns) \to 0$,
also by application of the dominated convergence theorem as in the
previous example.
Thus minimizing sequences may exhibit multiplication functions which oscillate
with increasing frequency whilst the implied operators $\Gn$ converge
in $\mathcal{HS}(\cH^{1},\cH^{-1})$. 
Again this may be computationally undesirable in many applications.
\end{example}

The previous examples suggest that, in order to induce improved behaviour of 
minimising sequences related to the the operators $\Gamma$, in
particular when $\Gamma$ is a mutiplication operator, it 
may be useful to regularise the minimisation in problem \eqref{e:problem1}. 
To this end, let $\cG \subseteq \mathcal{HS}(\cH^{1},\cH^{-1})$ be a Hilbert space of  linear operators. For fixed $m \in \cH$ and $\Gamma \in \cG$ we write $\nmG$ for the Gaussian measure with mean $m$ and covariance operator given by \eqref{e:covariance}. We now make the choice
\begin{equation}
\cA := \{\nmG \in \Me \colon m \in \cH, \, \Gamma \in \cG \}.
\label{eq:choice}
\end{equation}
Again, we use the convention $\nmG = \delta_0$ if $\Cni+\Gamma$ fails to be positive. Then, for some $\delta >0$ we consider the modified minimisation problem
\begin{equation}
\underset{\nu \in \cA} \argmin \Bigl(\Dkl(\nu,\mu)  + \delta \| \Gamma \|_{\cG}^2\Bigr).  \label{e:problem2}
\end{equation}

We have existence of minimisers for problem \eqref{e:problem2} under very general assumptions. In order to state these assumptions, we introduce auxiliary interpolation spaces. For any $s >0$, we denote by  $\cH^s$ the domain of $C_0^{-\frac{s}{2}}$ equipped with the scalar product $\la \cdot, C_0^{-s} \cdot \ra$ and define $\cH^{-s}$ by duality.

\begin{theorem}\label{thm:existence2}
Let $\mu_0$ be a Gaussian measure with mean $m_0 \in \cH$ and covariance operator $C_0 \in \TC$ and let $\mu$ be given by \eqref{e:target}. 
Consider $\cA$ given by \eqref{eq:choice}.
Suppose that the space $\cG$ consists of symmetric operators on $\cH$ and embeds compactly into the space of bounded linear operators from $\cH^{1-\kappa}$ to $\cH^{-(1-\kappa)}$ for some $0<\kappa <1$. Then, provided that
$\Dkl( \mu_0 \| \mu) < \infty$, there exists a minimiser 
$\ns=\nmsG$ for problem \eqref{e:problem2}. 

Furthermore, along any minimising sequence $\nu(m_n,\Gamma_n)$  there is a subsequence $\nu(m_{n'}, \Gamma_{n'})$ along which
$\Gamma_{n'} \to \Gamma_\star$ strongly in $\cG$ and $\nu(m_{n'}, \Gamma_{n'}) \to\nu(m_\star,\Gs)$ with respect to the total variation distance.
\end{theorem}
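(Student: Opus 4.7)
The plan is to execute the direct method of the calculus of variations, exploiting the compactness built into the regularised functional. First, the infimum is finite: evaluating at $\nu=\mu_0$ (corresponding to $m=m_0$, $\Gamma=0$) gives the value $\Dkl(\mu_0\|\mu)+\delta\|0\|_\cG^2=\Dkl(\mu_0\|\mu)<\infty$ by hypothesis. Let $\nu_n=\cNp(m_n,\Gamma_n)$ be a minimising sequence. The regularisation term is controlled by the functional and hence bounded, so $\sup_n\|\Gamma_n\|_\cG<\infty$. Since $\cG$ is Hilbert, a subsequence (not relabelled) converges weakly in $\cG$ to some $\Gs$; the compact embedding of $\cG$ into $\LHk$ then promotes this to strong convergence $\Gamma_n\to\Gs$ in $\LHk$. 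Simultaneously $\Dkl(\nu_n\|\mu)$ is bounded along the subsequence, so Proposition \ref{le:firstproperties} yields a further subsequence along which $\nu_n\rightharpoonup\ns$ for some $\ns\in\Me$.

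Second, I would identify $\ns$ with $\nmsG$. By Lemma \ref{le:GaussianConv}, $\ns$ is Gaussian, say $\cN(m_\star,\Cs)$, with associated convergences of $m_n$ to $m_\star$ and of $C_n$ to $\Cs$. Lower semicontinuity of $\Dkl(\cdot\|\mu)$ under weak convergence gives $\Dkl(\ns\|\mu)<\infty$, so $\ns$ is equivalent to $\mu_0$, thereby ruling out the Dirac convention in the definition of $\nmsG$. The strong convergence $\Gamma_n\to\Gs$ in $\LHk$ translates into convergence of the quadratic forms $Q_{\Gamma_n}\to Q_{\Gs}$ on the common form domain $\cH^1$, hence $C_n^{-1}=C_0^{-1}+\Gamma_n\to C_0^{-1}+\Gs$ as forms. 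Combined with the operator-level convergence of $C_n$ to $\Cs$ from Lemma \ref{le:GaussianConv}, Lemma \ref{le:Gamma} then gives $\Cs^{-1}=C_0^{-1}+\Gs$, i.e.\ $\ns=\nmsG$.

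Third, I would close the existence and convergence arguments via lower semicontinuity of each piece. Proposition \ref{le:firstproperties} gives $\liminf_n\Dkl(\nu_n\|\mu)\ge\Dkl(\ns\|\mu)$, and weak lower semicontinuity of the Hilbert norm on $\cG$ gives $\liminf_n\|\Gamma_n\|_\cG^2\ge\|\Gs\|_\cG^2$. Adding these inequalities and using that $\nu_n$ is minimising forces equality in both $\liminf$'s, so $\ns$ is a minimiser; extracting once more we may assume $\Dkl(\nu_n\|\mu)\to\Dkl(\ns\|\mu)$ and $\|\Gamma_n\|_\cG\to\|\Gs\|_\cG$. Lemma \ref{le:CS} then upgrades weak convergence $\nu_n\rightharpoonup\ns$ to convergence in total variation, while weak convergence combined with norm convergence in the Hilbert space $\cG$ upgrades $\Gamma_n\rightharpoonup\Gs$ to strong convergence in $\cG$.

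The principal obstacle will be the identification step: matching the covariance $\Cs$ delivered by Lemma \ref{le:GaussianConv} with the operator $(C_0^{-1}+\Gs)^{-1}$ delivered by the $\cG$-level limit of $\Gamma_n$. This requires the \emph{compact} (not merely continuous) embedding of $\cG$ into $\LHk$, since this is what promotes the a priori weak-$\cG$ convergence to the operator-level convergence on $\cH^{1-\kappa}\to\cH^{-(1-\kappa)}$ needed to pass to the limit in the form identity $C_n^{-1}=C_0^{-1}+\Gamma_n$, invoke Lemma \ref{le:Gamma}, and conclude that the limiting precision operator really is the one associated with $\ns$.
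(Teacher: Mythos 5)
Your proposal is correct and follows essentially the same route as the paper's proof: the direct method with weak compactness from the KL sublevel sets and the bounded $\cG$-norms, the compact embedding into $\LHk$, lower semicontinuity of both terms of the regularised functional, and then Lemma~\ref{le:CS} together with weak-plus-norm convergence in the Hilbert space $\cG$ for the strong conclusions. The one place where the paper does substantially more work is the identification $\Cs^{-1}=C_0^{-1}+\Gs$, which you correctly flag as the principal obstacle: the paper establishes it via the KLMN theorem, a uniform lower bound on the spectra of $C_0^{-1}+\Gamma_n$ (deduced from the operator-norm convergence of $C_n$), and strong resolvent convergence of $C_0^{-1}+\Gamma_n$ to $C_0^{-1}+\Gs$, rather than by a direct appeal to Lemma~\ref{le:Gamma}.
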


\begin{proof}
The assumption $\Dkl(\mu_0\| \mu) < \infty$ implies that the infimum in \eqref{e:problem2} is finite and non-negative.
Let $\nu_n = \cNp(m_n,\Gn)$ be a minimising sequence for \eqref{e:problem2}. As both $ \Dkl(\nu_n\| \mu)$  and  $ \| \Gn \|_{\cG}^2$ are non-negative this implies that $\Dkl(\nu_n\| \mu)$ and  $ \| \Gn \|_{\cG}^2$  are bounded along the sequence. Hence, by Proposition \ref{le:firstproperties} and by the compactness assumption on $\cG$,  after passing to a subsequence twice we can assume that the measures $\nu_n $ converge weakly as probability measures to a measure $\ns$ and the operators $\Gn$ converge weakly in $\cG$ to an operator $\Gs$; furthermore the $\Gn$ 
also converge in the operator norm of $\LHk$ to $\Gs$.   By lower semicontinuity of $\nu \mapsto \Dnm$ with respect to weak convergence of probability measures (see Proposition \ref{le:firstproperties}) and by lower semicontinuity of $\Gamma \mapsto \| \Gamma\|_{\cG}^2$ with respect to weak convergence in  $\cG$ we can conclude that 
\begin{align}
\Dkl(\ns \| \mu) + \delta \| \Gs \|_{\cG}^2 &\leq
\liminf_{n \to \infty}\Dkl(\nu_n\|\mu)+\liminf_{n \to \infty}
\delta\|\Gn\|_{\cG}^2 \notag\\
&\leq \lim_{n \to \infty}\Bigl(\Dkl(\nu_n\|\mu)+\delta\|\Gn\|_{\cG}^2 \Bigr)\notag\\
& =\underset{\nu \in \cA} \inf \Bigl(\Dkl(\nu\|\mu) + \delta \| \Gamma \|_{\cG}^2\Bigr). \label{e:regmin1}
\end{align}

By Lemma \ref{le:GaussianConv} $\ns$ is a Gaussian measure with mean $m_\star$ and covariance operator $\Cs$  and we have 
\begin{equation}\label{e:proofthm1}
\| m_n - m_\star \|_{\cH} \to 0 \qquad \text{and} \qquad \| C_n -C_\star \|_{\cL(\cH)} \to 0.
\end{equation}
We want to show that $C_\star = (C_0 + \Gs)^{-1}$ in the sense of Lemma \ref{le:Gamma}. In order to see this, note that $\Gs \in \LHk$ which implies that for $x \in \cH^1$ we have for any $\lambda >0$
\begin{align}
\la  x, \Gs x\ra &\leq \big\| \Gs \big\|_{\LHk}\big\| x \|_{\cH^{1-\kappa}}^2\notag\\
 &\leq \big\| \Gs \big\|_{\LHk}   \Big(\lambda (1-\kappa) \big\| x \|_{\cH^{1}}^2 + \lambda^{-\frac{1-\kappa}{\kappa}}\kappa \big\| x \|_{\cH}^2 \Big)\notag.
\end{align}
Hence, $\Gs$ is infinitesimally form-bounded with respect to $\Cni$ (see e.g. \cite[Chapter X.2]{RS2}). In particular, by the KLMN theorem (see \cite[Theorem X.17]{RS2}) the form $\la x, \Cni x \ra + \la x,\Gs x\ra$ is bounded from below and closed. Hence there exists a unique self-adjoint operator denoted by $\Cni + \Gs$ with form domain $\cH^1$ which generates this form. 

The convergence of $C_n = (\Cni + \Gn)^{-1}$ to $\Cs$ in $\cL(\cH)$ implies in particular, that the $C_n$ are bounded in the operator norm, and hence the spectra of the $\Cni +\Gn$ are away from zero from below, uniformly. 
This implies that 
\begin{equation*}
\inf_{\|x\|_{ \cH}=1} \Bigl(\la x, \Cni x \ra + \la x, \Gs x\ra\Bigr) \geq \liminf_{n \to \infty} \inf_{\|x\|_{ \cH}=1} \Bigl(\la x, \Cni x \ra + \la x, \Gn x\ra \Bigr)> 0,
\end{equation*}
so that $\Cni +\Gs$ is a positive operator and in particular invertible and so is $(\Cni +\Gs)^{\frac12}$.  As $(\Cni +\Gs)^{\frac12}$ is defined on all of $\cH^1$ its inverse maps onto $\cH^1$. Hence, the closed graph theorem implies that $C_0^{-\frac12}(\Cni +\Gs)^{-\frac12} $ is a bounded operator on $\cH$. From this we can conclude that for all $x \in \cH^1$
\begin{align*}
\big| \la&  x, (\Cni +\Gn) x \ra -  \la x, (\Cni +\Gamma_{\star}) x \ra  \big|\\
 &\leq \big\| \Gn - \Gs \big\|_{\LHo} \big\| x\big\|_{\cH^1}^2\\
&\leq \big\| \Gn - \Gs \big\|_{\LHo} \big\| C_0^{-\frac12}  (\Cni +\Gs)^{-\frac12} \big\|_{\cL(\cH)}^2  \big\| (\Cni + \Gs \big)^{\frac12} x\big\|_{\cH}^2.
\end{align*}
By \cite[Theorem VIII.25]{RS1} this implies that $\Cni +\Gs$ converges to $\Cni + \Gs$ in the strong resolvent sense. As all operators are positive and bounded away from zero by \cite[Theorem VIII.23]{RS1} we can conclude  that the inverses $(\Cni +\Gn)^{-1}$ converge to $(\Cni + \Gs)^{-1}$. By \eqref{e:proofthm1} this implies that $C_\star = (\Cni + \Gs)^{-1}$ as desired. 

We can conclude that $\ns=\nmsG$ and hence that 
\begin{equation*}
\Dkl(\ns \| \mu) + \delta \| \Gs \|_{\cG}^2 \geq \underset{\nu \in \cA} \inf \Bigl(\Dkl(\nu\|\mu) + \delta \| \Gamma \|_{\cG}^2\Bigr),
\end{equation*}
implying from \eqref{e:regmin1} that
\begin{align*}
\Dkl(\ns \| \mu) + \delta \| \Gs \|_{\cG}^2 &=
\liminf_{n \to \infty}\Dkl(\nu_n\|\mu)+\liminf_{n \to \infty}
\delta\|\Gn\|_{\cG}^2 \notag\\
&= \lim_{n \to \infty}\Bigl(\Dkl(\nu_n\|\mu)+\delta\|\Gn\|_{\cG}^2 \Bigr)\notag\\
& =\underset{\nu \in \cA} \inf \Bigl(\Dkl(\nu\|\mu) + \delta \| \Gamma \|_{\cG}^2\Bigr). 
\end{align*}
Hence we can deduce using the lower semi-continuity of $\Gamma \mapsto \|\Gamma \|_{\cG}^2$ with respect to weak convergence in $\cG$ 
\begin{align*}
\limsup_{n \to \infty} \Dkl(\nu_n \| \mu) 
&\leq \lim_{n \to \infty} \Bigl(\Dkl(\nu_n\|\mu)+\delta\|\Gn\|_{\cG}^2 \Bigr) - \liminf_{n \to \infty} \delta\|\Gn\|_{\cG}^2\\
&\leq \Bigl(\Dkl(\nu_{\star}\|\mu)+\delta\|\Gamma_{\star}\|_{\cG}^2 \Bigr) -  \delta\|\Gamma_{\star}\|_{\cG}^2\\
&= \Dkl(\ns \| \mu) ,
\end{align*}
which implies that $\lim_{n \to \infty} \Dkl(\nu_n \| \mu) = \Dkl(\ns \| \mu)$. In the same way it follows that $\lim_{n \to \infty } \|\Gamma_n\|_{\cG}^2 = \|\Gs \|_{\cG}^2$.
By Lemma \ref{le:CS} we can conclude that $\|\nu_n - \ns \rTV \to 0$.  For the operators $\Gn$ we note that weak convergence together with convergence of the norm implies strong convergence.

\end{proof}

\vskip2ex

\begin{example}
The first example we have in mind is the case where, 
as in Example \ref{ex:delta},  $\cH=L^2([-1,1])$,  $\Cni$ is the negative Dirichlet-Laplace operator on $[-1,1]$,  $\cH^1= H^1_0$, and $m_0=0$. Thus
the reference measure is the distribution of a centred Brownian bridge. 
By a slight adaptation of the proof of \cite[Theorem 6.16]{martinSPDE})
we have that, for $p \in (2,\infty]$, $\|u\|_{L^p} \le C\|u\|_{\cH^s}$ 
for all $s>\frac{1}{2}-\frac{1}{p}$ and we will use this fact in what follows.
For $\Gamma$ we chose multiplication operators with  suitable functions  $\hat{\Gamma}\colon [-1,1] \to \R$. For any $r>0$ we denote by $\cG^r$ the space of multiplication operators with functions $\hat{\Gamma} \in  H^r([-1,1])$ endowed with the Hilbert space structure of $H^r([-1,1])$. In this notation, the compact embedding of the spaces $H^r([-1,1])$ into $L^2([-1,1])$, can be rephrased as a compact embedding of the  space $\cG^r$  into the space $\cG^0$, i.e. the space of $L^2([-1,1])$ functions, viewed as multiplication operators. By the form of Sobolev 
embedding stated above we have that for $\kappa<\frac34$ and any
\footnote{
Throughout the paper we write $a \lesssim b$ to indicate that there exists a constant $c>0$ independent of the relevant quantities such that $a \leq c b$.}
 $x \in \cH^{1-\kappa}$
\begin{equation}\label{e:imbed}
\la x, \Gamma x \ra = \int_{-1}^{1} \hat{\Gamma}(t)  x(t)^2  dt  \leq  \| \hat{\Gamma} \|_{L^2([-1,1])} \|x\|_{L^4}^2 \lesssim  \| \hat{\Gamma} \|_{L^2([-1,1])} \|x\|_{\cH^{1-\kappa}}^2.
\end{equation}
Since this shows that
$$\|\Gamma\|_{\cL(\cH^{1-\kappa},\cH^{-(1-\kappa)})} \lesssim \| \hat{\Gamma} \|_{L^2([-1,1])}$$
it demonstrates that $\cG_0$ embeds continuously into the space $\LHk$ and hence, the spaces $\cG^r$, which are compact in $\cG_0$,
satisfy the assumption of Theorem \ref{thm:existence2} for any $r>0$. 
\end{example}

\begin{example}
Now consider $\mu_0$ to be a Gaussian field over a space of dimension $2$
or more. In this case we need to take a covariance operator that has a stronger regularising property than the inverse Laplace operator. For example, if we denote by $\Delta$ the Laplace operator on the $n$-dimensional torus $\Tn$, then the Gaussian field with covariance operator $C_0  = (-\Delta + I)^{-s}$ takes values in $L^2(\Tn)$ if and only if $ s> \frac{n}{2}$. In this case, the space $\cH^1$ coincides with the fractional Sobolev space $H^{s}(\Tn)$. Note that the condition $s>\frac{n}{2}$ precisely implies that there exists a $\kappa>0$ such that the space $\cH^{1-\kappa}$ embeds into $L^{\infty}(\Tn)$ and in particular into $L^4[0,T]$.  As above, denote by $\cG^r$ the space of multiplication operators on $L^2(\Tn)$ with functions $\hat{\Gamma} \in H^r(\Tn)$. Then the same calculation as \eqref{e:imbed} shows that the conditions of Theorem \ref{thm:existence2} are satisfied for any $r>0$.

\end{example}

\subsection{Uniqueness of Minimisers}
\label{ssec:uniqueness}

As stated above in Proposition \ref{thm:CS}, the minimisation
problem \eqref{e:problem1} has a unique minimiser if the set $\cA$ is convex. 
Unfortunately, in all of the situations discussed in this section,
$\cA$ is not convex, and in general we cannot expect  minimisers to be unique;
the example in subsection~\ref{ssec:Examples} illustrates nonuniqueness.
There is however one situation in which we have uniqueness for all of the  choices of $\cA$ discussed in Theorem \ref{thm:Gaussian}, namely the case of where instead of $\cA$ the measure $\mu$ satisfies a convexity property. 
Let us first recall  the definition  of $\lambda$-convexity. 

\begin{definition}
Let $\Phi \colon \cH^1 \to \R$ be function. For a $\lambda \in \R$ the function $\Phi$ is  \emph{$\lambda$-convex} with respect to $\cH^1$ if 
\begin{equation}\label{e:laco}
\cH^1  \ni x \mapsto  \frac{\lambda}{2}  \la x,x\ra_{\cH^1} + \Phi(x)
\end{equation}
is convex on $\cH^1$.
\end{definition}
\begin{remark}
Equation \eqref{e:laco} implies that for any $x_1,x_2 \in \cH^1$ and for any $t \in (0,1)$ we have
\begin{equation}\label{e:laco2}
\Phi((1-t) x_1 + t x_2 ) \leq (1-t) \Phi(x_1) + t \Phi(x_2) + \lambda \frac{t(1-t)}{2}  \| x_1 -x_2 \|_{\cH^1}^2.
\end{equation}
Equation \eqref{e:laco2} is often taken to define $\lambda$-convexity because it gives useful estimates even when the distance function does not come from a scalar product. For Hilbert spaces both definitions are equivalent.
\end{remark}


The following theorem implies uniqueness for 
the minimisation problem \eqref{e:problem1}
as soon as $\Phi$ is $(1-\kappa)$-convex for a $\kappa >0$ and satisfies a mild integrability property.
The proof is given in section \ref{sec:proofs}.

\begin{theorem}\label{thm:uniqueness}
Let $\mu$ be as in \eqref{e:target} and assume that there exists a $\kappa > 0$ such that $\Phi$ is $(1-\kappa)$-convex with respect to $\cH^1$.  Assume that there exist constants $0<c_i< \infty$, $i=1,2,3$, and $\alpha \in (0,2)$ such that for every $x \in X$ we have
\begin{equation}\label{e:PhiBo}
-c_1 \|x \|_X^\alpha \leq  \Phi(x) \leq c_2 \exp\big(c_3 \|x\|_X^\alpha \big).
\end{equation}
Let $\nu_1 = \cN(m_1,C_1)$ and  $\nu_2= \cN(m_2, C_2)$  be Gaussian measures with $\Dkl(\nu_1 \| \mu)< \infty$ and $\Dkl(\nu_2 \| \mu)< \infty$. For any $t \in (0,1)$ there exists an interpolated measure $\nt= \cN(m_t, C_t)$ which satisfies  $\Dkl(\nt \| \mu)< \infty$. 
Furthermore, as soon as $\nu_1 \neq \nu_2$ there exists a constant $K>0$ such that for all $t \in (0,1)$ 
\begin{equation*}
\Dkl(\nt \| \mu) \leq  (1-t)  \Dkl(\nu_1 \| \mu) +t  \Dkl(\nu_1 \| \mu) - \frac{t(1-t)}{2}K.
\end{equation*}
Finally, if we have  $m_1=m_2$ then $m_t = m_1$ holds as well for all $t \in (0,1)$, and in the same way, if $C_1 = C_2$, then $C_t = C_1$ for all $t \in (0,1)$.

\end{theorem}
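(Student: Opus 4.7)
The plan is to construct $\nt$ via McCann-style displacement interpolation adapted to the Cameron-Martin geometry, exploiting the explicit affine Brenier map between equivalent Gaussians. Since finiteness of $\Dkl(\nu_i\|\mu)$ forces $\nu_i \sim \mu_0$, Feldman-Hajek (Proposition \ref{prop:FH}) provides $C_i^{-1} - \Cni = \Gamma_i \in \mathcal{HS}(\cH^1,\cH^{-1})$ and $m_i - m_0 \in \cH^1$. Working in the whitened coordinates $y = C_0^{-1/2}(x - m_0)$, in which $\mu_0$ becomes a standard Gaussian on $\cH$ and the $\cH^1$-inner product on $x$ becomes the ordinary inner product on $y$, I would define the positive self-adjoint operator
$$A := C_1^{-1/2}\bigl(C_1^{1/2} C_2 C_1^{1/2}\bigr)^{1/2} C_1^{-1/2}$$
(satisfying $A C_1 A = C_2$), the affine Brenier map $T(x) := m_2 + A(x-m_1)$ and its interpolant $T_t(x) := (1-t)x + tT(x)$, and set $\nt := (T_t)_\#\nu_1$. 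Then $\nt$ is Gaussian with mean $m_t = (1-t)m_1 + tm_2$ and covariance $C_t = ((1-t)I + tA)C_1((1-t)I+tA)$. The first verifications are that $\nt \sim \mu_0$, which I obtain by checking that \eqref{e:FHH} persists along the interpolation, and that $\Dkl(\nt\|\mu) < \infty$: the decomposition \eqref{e:KL2} reduces this to finiteness of $\Dkl(\nt\|\mu_0)$ and of $\E^{\nt}[\Phi]$, the latter following from Fernique integrability of $\nt$ combined with \eqref{e:PhiBo} (the bound $\alpha<2$ is precisely what makes Fernique applicable).

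Next, $(1-\kappa)$-convexity of $\Phi$ on $\cH^1$, applied pointwise to $T_t(x) = (1-t)x + tT(x)$, gives
\begin{equation*}
\Phi(T_t(x)) \leq (1-t)\Phi(x) + t\Phi(T(x)) + (1-\kappa)\tfrac{t(1-t)}{2}\|T(x) - x\|_{\cH^1}^2,
\end{equation*}
and integrating against $\nu_1$ using $T_\#\nu_1 = \nu_2$ yields
\begin{equation*}
\E^{\nt}[\Phi] \leq (1-t)\E^{\nu_1}[\Phi] + t\E^{\nu_2}[\Phi] + (1-\kappa)\tfrac{t(1-t)}{2} W^2,
\end{equation*}
where $W^2 := \E^{\nu_1}\|T(x)-x\|_{\cH^1}^2$ is the cost of the transport $T$ in the $\cH^1$-Wasserstein metric.

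The core step is the matching $1$-displacement convexity of the reference KL,
\begin{equation*}
\Dkl(\nt\|\mu_0) \leq (1-t)\Dkl(\nu_1\|\mu_0) + t\Dkl(\nu_2\|\mu_0) - \tfrac{t(1-t)}{2} W^2.
\end{equation*}
In the whitened $y$-coordinates $\mu_0$ is standard Gaussian with potential $V(y) = \tfrac12\|y\|^2$ satisfying $\mathrm{Hess}\,V = I$; the Bakry-Emery/McCann principle then gives $1$-displacement convexity of $\Dkl(\cdot\|\mu_0)$ in the standard $y$-Wasserstein metric, which is precisely the $\cH^1$-Wasserstein metric on $x$. At the level of explicit Gaussian formulas, I would use
\begin{equation*}
\Dkl(\cN(m,C)\|\mu_0) = \tfrac12\|m-m_0\|_{\cH^1}^2 + \tfrac12\tr\bigl(C_0^{-1}C - I - \log(C_0^{-1}C)\bigr)
\end{equation*}
and verify convexity of the mean piece (an exact quadratic identity in $\cH^1$) and the covariance piece (Bures-type convexity of $C \mapsto \tr(C_0^{-1}C - I - \log(C_0^{-1}C))$ along the McCann geodesic $C_t$) separately, the two deficits assembling to $W^2$. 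Combining with the previous display yields
\begin{equation*}
\Dkl(\nt\|\mu) \leq (1-t)\Dkl(\nu_1\|\mu) + t\Dkl(\nu_2\|\mu) - \tfrac{t(1-t)}{2}\kappa W^2,
\end{equation*}
which is the theorem with $K := \kappa W^2$. This is strictly positive whenever $\nu_1 \neq \nu_2$: if $m_1 \neq m_2$ the mean contribution to $W^2$ is nonzero, and if $C_1 \neq C_2$ then $A \neq I$ so the covariance contribution is nonzero. The special cases fall out at once from the construction: $m_1=m_2$ forces $m_t = m_1$, while $C_1 = C_2$ forces $A = I$ and hence $C_t = C_1$.

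The main obstacle is establishing the infinite-dimensional displacement convexity of $\Dkl(\cdot\|\mu_0)$ rigorously, since both $\tr$ and $\log\det$ in the Gaussian formula require regularisation; I expect to handle this by passing through the finite-dimensional projections $\pg$ and using the Hilbert-Schmidt control \eqref{e:FHH} to pass to the limit. A secondary technical issue is the rigorous definition of $A$ via the polar decomposition when $C_1, C_2$ are only equivalent (not trace-class close) to $\mu_0$, which I would address by reformulating everything through the bounded operators $C_0^{1/2} C_i^{-1/2}$ and working with Hilbert-Schmidt perturbations of the identity throughout.
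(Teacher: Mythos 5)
Your proposal is correct in strategy and is essentially the paper's own argument: the affine transport map between the two Gaussians (your $A$ and the paper's $\Lambda_\gamma$ both satisfy $\Lambda C_1\Lambda=C_2$ and are symmetric positive), the linear interpolation $T_t$, the pushforward definition of $\nt$, the use of $(1-\kappa)$-convexity to control the $\Phi$-term with deficit $(1-\kappa)\tfrac{t(1-t)}{2}W^2$, and a matching convexity statement for the reference part producing the net $-\kappa\tfrac{t(1-t)}{2}W^2$. The only genuine difference is bookkeeping: you split $\Dkl(\cdot\|\mu)$ as $\Dkl(\cdot\|\mu_0)+\E[\Phi]$ and invoke Bakry--Emery $1$-displacement convexity of $\Dkl(\cdot\|\mu_0)$ (or the explicit trace/log-det formula), whereas the paper works on finite-dimensional projections, splits against Lebesgue measure into an entropy term $\mathscr{H}_\gamma$ plus an energy term $\E[\Psi]$ with $\Psi=\Phi+\tfrac12\|\cdot\|_{\cH^1}^2$, proves plain displacement convexity of $\mathscr{H}_\gamma$ by McCann's determinant--concavity lemma, and lets the exact quadratic deficit of $\tfrac12\|\cdot\|^2_{\cH^1}$ supply the $-1$ that combines with $(1-\kappa)$; these are algebraically the same decomposition. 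What you have deferred --- and what constitutes roughly half of the paper's proof --- is the passage $\gamma\to\infty$: the sandwich $\liminf_\gamma\Dkl(\nu_{i,\gamma}\|\mu_{0,\gamma})\ge\Dkl(\nu_i\|\mu_0)$ (lower semicontinuity) against $\Dkl(\nu_{i,\gamma}\|\mu_{0,\gamma})\le\Dkl(\nu_i\|\mu_0)$ (monotonicity of KL under projections, Proposition \ref{le:AGS1}); the convergence $\E^{\nu_{i,\gamma}}[\Phi(\pi_\gamma x)]\to\E^{\nu_i}[\Phi(x)]$, which needs the uniform Fernique bound of Lemma \ref{le:PhiConvergence} and is where the hypothesis $\alpha<2$ in \eqref{e:PhiBo} and the continuity of $\Phi$ on $X$ actually enter; tightness of $\ntg$ and identification of its Gaussian limit; and Fatou's lemma to get $K>0$ from a limiting coupling when $\nu_1\neq\nu_2$. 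You correctly identify this as the main obstacle and propose the right remedy (finite-dimensional projections), so there is no wrong step, but be aware that your ``core step'' displayed inequality for $\Dkl(\cdot\|\mu_0)$ is exactly the part that cannot be asserted directly in infinite dimensions (neither trace in your formula converges separately), and the honest proof must route everything through the projected measures from the start, as the paper does.
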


The measures $\nt$ introduced in Theorem \ref{thm:uniqueness} are a special case of geodesics on Wasserstein space first introduced in \cite{McCann} in a finite dimensional situation. In addition, the proof shows that the constant $K$  
appearing in the statement is $\kappa$ times the square of the Wasserstein distance between $\nu_1$ and $\nu_2$ with respect to the $\cH^1$ norm. 
 See \cite{AGS,FU} for a more detailed discussion of mass transportation on infinite dimensional spaces. The following is an immediate consequence of
Theorem \ref{thm:uniqueness}: 

\begin{corollary}
Assume that $\mu$ is a probability measure given by
\eqref{e:target}, that  there exists a $\kappa >0$ such that $\Phi$ is $(1-\kappa)$ convex with respect to $\cH^1$ and that $\Phi$ satisfies the bound \eqref{e:PhiBo}. 
Then for any of the four choices of sets $\cA_i$ discussed in Theorem \ref{thm:Gaussian} the minimiser of $\nu \mapsto \Dnm$ is unique in $\cA_i$.
\end{corollary}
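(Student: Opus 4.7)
The plan is to argue by contradiction, exploiting the strict convexity along the interpolating geodesics $\nt$ constructed in Theorem~\ref{thm:uniqueness}, and then verify that these geodesics remain inside each of the four admissible sets $\cA_i$.

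So first I would suppose that there are two distinct minimisers $\nu_1,\nu_2 \in \cA_i$, both attaining $M := \inf_{\nu \in \cA_i} \Dkl(\nu\|\mu)$; note that this infimum is finite by the assumption (implicit in speaking of minimisers) that at least one competitor has finite KL divergence. Since $\Dkl(\nu_j\|\mu) = M < \infty$ for $j=1,2$, Theorem~\ref{thm:uniqueness} applies and yields, for every $t \in (0,1)$, a Gaussian measure $\nt = \cN(m_t,C_t)$ with $\Dkl(\nt\|\mu)<\infty$ and
\begin{equation*}
\Dkl(\nt\|\mu) \;\leq\; (1-t)\,\Dkl(\nu_1\|\mu) + t\,\Dkl(\nu_2\|\mu) - \frac{t(1-t)}{2}K \;=\; M - \frac{t(1-t)}{2}K,
\end{equation*}
for some $K>0$ depending only on $\nu_1$ and $\nu_2$. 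This is strictly less than $M$, so if $\nt$ lies in $\cA_i$ for some $t\in(0,1)$ we have the desired contradiction.

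It remains to verify membership of $\nt$ in each class. For $\cA_1$ the measure $\nt$ is Gaussian by construction, so $\nt \in \cA_1$. For $\cA_2$ we have in addition $\Dkl(\nt\|\mu) < \infty$, hence $\nt \ll \mu$ and by \eqref{e:target} also $\nt \ll \mu_0$; since $\nt$ is Gaussian, Feldman--Hajek (Proposition~\ref{prop:FH}) upgrades absolute continuity to equivalence, placing $\nt$ in $\cA_2$. For $\cA_3$, the hypothesis $C_1 = C_2 = \hat C$ together with the last assertion of Theorem~\ref{thm:uniqueness} gives $C_t = \hat C$ for all $t$, so $\nt \in \cA_3$. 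For $\cA_4$, the hypothesis $m_1 = m_2 = \hat m$ similarly yields $m_t = \hat m$ for all $t$, hence $\nt \in \cA_4$.

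The main conceptual point, and the only place where any real work is needed, is identifying that the interpolation provided by Theorem~\ref{thm:uniqueness} is structurally compatible with the side constraints defining $\cA_3$ and $\cA_4$; this is precisely what the ``if $m_1=m_2$ then $m_t=m_1$, and if $C_1=C_2$ then $C_t=C_1$'' clause of that theorem guarantees. Everything else is routine: strict decrease along $\nt$ immediately contradicts minimality, so no two distinct minimisers can coexist in any $\cA_i$.
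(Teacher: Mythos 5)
Your proof is correct and follows exactly the route the paper intends: the paper states this corollary as an immediate consequence of Theorem \ref{thm:uniqueness}, relying on the strict decrease of $\Dkl(\nt\|\mu)$ along the interpolation together with the final clause preserving the mean (for $\cA_4$) and covariance (for $\cA_3$) constraints. Your verification that $\nt$ lies in each $\cA_i$ (including the Feldman--Hajek upgrade from absolute continuity to equivalence for $\cA_2$) fills in precisely the details the paper leaves implicit.
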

\begin{remark}
The assumption that $\Phi$ is $(1-\kappa)$-convex for a $\kappa >0$ implies in particular that $\mu$ is \emph{log-concave} (see \cite[Definition 9.4.9]{AGS}). It can be viewed as a quantification of this log-concavity.
\end{remark}
\begin{example}
As in Examples \ref{ex:delta} and \ref{ex:averaging} above, let  $\mu_0$ be a centred Brownian bridge on $[-\frac{L}{2},\frac{L}{2}]$. As above we have $\cH^1 = H^1_0([-\frac{L}{2},\frac{L}{2}])$ equipped with the \emph{homogeneous} Sobolev norm and $X = C([-\frac{L}{2}, \frac{L}{2}])$. 

For some $\cC^2$ function $\ph\colon \R \to \R_+$  set  $\Phi\big(x(\cdot)\big) = \int_{-\frac{L}{2}}^{\frac{L}{2}} \ph(x(s)) \, ds$. The  integrability condition \eqref{e:PhiBo} translates immediately into the growth condition $- c_1' |x|^\alpha \leq \ph(x) \leq c_2' \exp(c_3' |x|^{\alpha})$ for $x \in \R$ and constants $0< c_i' < \infty$ for $i = 1,2,3$. 
Of course, the convexity assumption of Theorem \ref{thm:uniqueness} is satisfied if $\ph$ is convex. But  we can allow for some non-convexity. For example, if $\ph \in \cC^2(\R)$ and $\ph''$ is uniformly bounded from below by $-K \in \R$, then we get for $x_1,x_2 \in \cH^1$
\begin{align}
\Phi((1-t)& x_1 + t x_2 \big) \notag\\
= &\int_{-\frac{L}{2}}^{\frac{L}{2}} \ph\big( (1-t) x_1(s) + t x_2(s) \big)   \, ds  \notag\\
\leq &\int_{-\frac{L}{2}}^{\frac{L}{2}}  (1-t) \ph\big( ( x_1(s)  \big) + t \ph\big(x_2(s) \big)   + \frac{1}{2} t(1-t) K\big|  x_1(s) -x_2(s) \big|^2\, ds \notag \\
= & (1-t) \Phi(x_1) + t \Phi(x_2) + \frac{Kt(1-t)}{2} \int_{-\frac{L}{2}}^{\frac{L}{2}}  \big|  x_1(s) -x_2(s) \big|^2 ds . \notag
\end{align}
Using the  estimate 
\begin{align*}
 \int_{-\frac{L}{2}}^{\frac{L}{2}} \big|  x_1(s) -x_2(s) \big|^2 ds&     \leq\bigg( \frac{L}{\pi}\bigg)^2 \|x_1 -x_2 \|_{\cH^1}^2
\end{align*}
we see that $\Phi$ satisfies the convexity assumption as soon as $K < \big(\frac{\pi}{L} \big)^2$.
\end{example}
The proof of Theorem \ref{thm:uniqueness} is based on the influential concept of displacement convexity, introduced by McCann in  \cite{McCann}, and heavily inspired by the infinite dimensional exposition in \cite{AGS}. It can be found in Section~\ref{sec:proofUniqueness}.

\subsection{Gaussian Mixtures}
\label{ssec:mixtures}

We have demonstrated a methodology for approximating measure $\mu$ given
by \eqref{e:target} by a Gaussian $\nu$. If $\mu$ is multi-modal
then this approximation can result in several local minimisers
centred on the different modes. A potential way to capture all modes
at once is to use Gaussian mixtures, as explained in the finite
dimensional setting in \cite{bishop2006pattern}. We explore this
possibility in our infinite dimensional context:
in this subsection we show existence of minimisers for problem 
\eqref{e:problem1} in the situation when we are minimising over a set of convex combinations of Gaussian measures.

We start with a baisc lemma for which
we do not need to assume that the mixture measure comprises Gaussians.
\begin{lemma}\label{le:convexcombination}
Let $\cA, \cB \subseteq \Me$ be closed under weak convergence of probability measures. Then so is
\begin{equation*}
\cC := \big\{  \mu := p^1 \nu^1+ p^2\nu^2 \colon 0 \leq p^{i} \leq 1, i=1,2; \quad  p^1 + p^2=1; \,\nu^1 \in \cA; \, \nu^2 \in \cB   \}
\end{equation*}
\end{lemma}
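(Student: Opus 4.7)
The plan is to prove sequential closure under weak convergence directly, handling separately the generic case in which the weights stay away from $0$ and $1$, and the degenerate cases where one weight collapses. We may assume $\cC$ is non-empty (otherwise closure is trivial), so in particular both $\cA$ and $\cB$ are non-empty.

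First I would take an arbitrary sequence $\mu_n = p_n^1 \nu_n^1 + p_n^2 \nu_n^2 \in \cC$ with $\mu_n \rightharpoonup \mu_\star$ weakly, and use compactness of $[0,1]$ to extract a subsequence (not relabelled) with $p_n^i \to p^i \in [0,1]$ and $p^1 + p^2 = 1$. Assume first that $p^1, p^2 \in (0,1)$. Since $(\mu_n)$ is weakly convergent it is tight by Prokhorov's theorem (recall $\cH$ is Polish): for every $\epsilon > 0$ there is a compact $K \subset \cH$ such that $\sup_n \mu_n(\cH \setminus K) < \epsilon$. Because $p_n^i$ is eventually bounded below by $p^i/2 > 0$, the inequality
\begin{equation*}
\nu_n^i(\cH \setminus K) \leq \frac{1}{p_n^i}\mu_n(\cH \setminus K)
\end{equation*}
shows that each of $(\nu_n^1)$ and $(\nu_n^2)$ is tight as well. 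Applying Prokhorov once more and passing to a further subsequence, I obtain $\nu_n^i \rightharpoonup \nu_\star^i$ weakly for $i = 1,2$. The closure hypotheses on $\cA$ and $\cB$ then give $\nu_\star^1 \in \cA$ and $\nu_\star^2 \in \cB$. Since weak convergence is preserved under convex combinations with converging coefficients, $\mu_n \rightharpoonup p^1 \nu_\star^1 + p^2 \nu_\star^2$; by uniqueness of weak limits $\mu_\star = p^1 \nu_\star^1 + p^2 \nu_\star^2 \in \cC$.

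Next I would deal with the boundary case $p^1 = 0$, $p^2 = 1$ (the case $p^1 = 1$ is symmetric). For any bounded continuous $f \colon \cH \to \R$,
\begin{equation*}
\int f \, d\mu_n = p_n^1 \int f \, d\nu_n^1 + p_n^2 \int f \, d\nu_n^2,
\end{equation*}
and the first term is bounded in absolute value by $p_n^1 \|f\|_\infty \to 0$. Hence $p_n^2 \int f \, d\nu_n^2 \to \int f \, d\mu_\star$, and since $p_n^2 \to 1$ this yields $\int f \, d\nu_n^2 \to \int f \, d\mu_\star$, i.e.\ $\nu_n^2 \rightharpoonup \mu_\star$. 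By closedness of $\cB$, $\mu_\star \in \cB$; picking any fixed $\tilde\nu \in \cA$ I can write $\mu_\star = 0 \cdot \tilde\nu + 1 \cdot \mu_\star \in \cC$.

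The main technical point is the tightness argument in the interior case, which is essential to extract a weak limit for each component $\nu_n^i$ separately; in the boundary cases the vanishing component simply drops out of the mixture and the corresponding sequence requires no control. No step looks delicate beyond the standard Prokhorov compactness on Polish spaces, which is directly applicable here.
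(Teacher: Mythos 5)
Your proof is correct and follows essentially the same route as the paper's: extract convergent weights, treat the degenerate boundary case separately, and in the interior case deduce tightness of each component from tightness of the mixture via the lower bound on the weights, then apply Prokhorov and the closedness of $\cA$ and $\cB$. The only (minor) refinement over the paper's version is that you explicitly note the need for a fixed $\tilde\nu\in\cA$ to place $\mu_\star$ back in $\cC$ in the boundary case, which the paper handles implicitly by assuming $\cB\subseteq\cC$.
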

\begin{proof}
Let $(\nu_n)= (p^1_n \nu^1_n + p^2_n \nu^2_n )$ be a sequence of measures in $\cC$ that converges weakly to $\mu_\star \in \Me$. We want to show that $\mu_\star \in \cC$. It suffices to show that a subsequence of the $\nu_n$ converges to an element in $\cC$.
After passing to a subsequence we can assume that for $i=1,2$ the $p^{i}_n$ converge to $p^{i}_\star \in [0,1]$ with $p^1_\star + p^2_\star=1$. 
Let us first treat the case where one of these $p^{i}_\star$ is zero -- say $p^1_\star=0$ and $p^2_\star=1$. In this situation we can conclude that the $\nu^2_n$ converge weakly to $\mu_\star$ and hence $\mu_\star \in \cB \subseteq \cC$.
Therefore, we can  assume  $p^{i}_\star \in (0,1)$. After passing to another subsequence we can furthermore assume that the $p^{i}_n$ are uniformly bounded from below by a positive constant $\hat{p} >0$. As the sequence $\nu_n$ converges weakly in $\Me$ it  is  tight. We claim that this implies automatically the tightness of the sequences $\nu_n^{i}$. Indeed, for a $\delta >0$ let $K_\delta \subseteq \cH$ be a compact set with $\nu_n(K_\delta) \leq \delta$ for any $n \geq 1$. Then we have for any $n$ and for $i=1,2$  that 
\begin{equation*}
\nu^{i}_n(K_\delta) \leq \frac{1}{\hat{p}} \nu(K_\delta) \leq \frac{\delta}{\hat{p}}.
\end{equation*}
After passing to yet another subsequence, we can assume that the $\nu^{1}_n$ converge weakly to $\nu^{1}_\star \in \cA$ and the $\nu^{2}_n$ converge weakly to $\nu^{2}_\star \in \cB$ . In particular, along this subsequence the $\nu_n$ converge weakly to $p^1_\star \nu_\star^1 + p^2_\star \nu_\star^2 \in \cC$.
\end{proof}
By a  simple recursion, Lemma \ref{le:convexcombination} extends immediately to sets $\cC$ of the form 
\begin{align*}
\tilde{\cC} := \big\{  \nu := \sum_{i=1}^N p^i \nu^i  \colon 0 \leq p^{i} \leq 1,\, \sum_{i=1}^N  p^i =1, \,\nu^i  \in \cA_i   \}, 
\end{align*}
for fixed $N$ and sets $\cA_i$ that are all closed under weak convergence of probability measures. Hence we get the  following consequence from Corollary \ref{c:firstproperties} and Lemma \ref{le:GaussianConv}.

\begin{theorem}\label{thm:Gaussian5}
Let $\mu_0$ be a Gaussian measure with mean $m_0 \in \cH$ and covariance operator $C_0 \in \TC$ and let $\mu$ be given by \eqref{e:target}. For any fixed $N$ and for any choice of set $\cA$ as in Theorem \ref{thm:Gaussian} consider the following choice for $\cC$ 
\begin{align*}
\cC := \big\{  \mu := \sum_{i=1}^N p^i \nu^i  \colon 0 \leq p^{i} \leq 1,\, \sum_{i=1}^N  p^i =1, \,\nu^i  \in \cA   \}.
\end{align*}
Then as soon as there exists a single $\nu \in \cA$  with $\Dnm < \infty$ there exists a minimiser of $\nu \mapsto \Dnm$ in $\cC$. 
This minimiser $\nu$ is necessarily equivalent to $\mu_0$ in the sense of measures.
\end{theorem}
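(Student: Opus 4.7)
The plan is to combine Lemma~\ref{le:convexcombination} (weak closedness of 2-mixtures) with Corollary~\ref{c:firstproperties} (existence of KL-minimisers over weakly closed sets), and then to use the Feldman-Hajek dichotomy from Proposition~\ref{prop:FH} for the equivalence statement.

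First, I would show that $\cC$ is closed under weak convergence of probability measures. Each of the four classes $\cA = \cA_i$ coming from Theorem~\ref{thm:Gaussian} is itself weakly closed: this is precisely what allowed Corollary~\ref{c:firstproperties} to be invoked there, via Lemma~\ref{le:GaussianConv}. To lift this to $N$-fold mixtures, I would induct on $N$, writing $\cC_k$ for the class of $k$-mixtures. The base case is $\cC_1 = \cA$. For the inductive step, every element $\sum_{i=1}^N p^i \nu^i \in \cC_N$ can be rewritten as a $2$-mixture $p^1 \nu^1 + (1-p^1) \tilde\nu$ with $\tilde\nu \in \cC_{N-1}$ (or, if $p^1 = 1$, as $1\cdot \nu^1 + 0 \cdot \tilde\nu$ with any auxiliary $\tilde\nu \in \cC_{N-1}$). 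Thus $\cC_N$ coincides with the set of 2-mixtures of $\cA$ and $\cC_{N-1}$, and Lemma~\ref{le:convexcombination} closes the induction.

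Next, I would apply Corollary~\ref{c:firstproperties} to the weakly closed set $\cC$. The hypothesis supplies a $\nu_0 \in \cA$ with $\Dkl(\nu_0\|\mu) < \infty$, and this $\nu_0$ sits in $\cC$ as the degenerate mixture $p^1 = 1$, $p^i = 0$ for $i \ge 2$, with the remaining slots filled arbitrarily from $\cA$. Hence the infimum is finite and is attained by some $\nu_\star = \sum_{i=1}^N p^i_\star \nu^i_\star \in \cC$.

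Finally, I would deduce the equivalence of $\nu_\star$ with $\mu_0$. Since $\Dkl(\nu_\star\|\mu) < \infty$ we have $\nu_\star \ll \mu$, and combined with $\mu \sim \mu_0$ this gives $\nu_\star \ll \mu_0$. For each index $i$ with $p^i_\star > 0$, any $\mu_0$-null set $A$ satisfies $p^i_\star \nu^i_\star(A) \le \nu_\star(A) = 0$, forcing $\nu^i_\star \ll \mu_0$; since $\nu^i_\star$ is Gaussian, the Feldman-Hajek alternative rules out singularity and upgrades this to $\nu^i_\star \sim \mu_0$. The reverse absolute continuity $\mu_0 \ll \nu_\star$ then follows because any $\mu_0$-positive set has positive $\nu^i_\star$-mass for each positively weighted component, hence positive $\nu_\star$-mass. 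I expect the only (mild) obstacle to be the bookkeeping around the $N$-to-$2$ mixture reduction and around components of weight zero, so that Lemma~\ref{le:convexcombination} applies in the precise form stated; everything else is a direct appeal to machinery already developed in the paper.
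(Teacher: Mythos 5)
Your argument is correct and follows essentially the same route as the paper, which proves this result by exactly the combination you describe: the recursion on Lemma~\ref{le:convexcombination} to get weak closedness of $N$-fold mixtures, Corollary~\ref{c:firstproperties} for existence, and the Feldman--Hajek dichotomy (as in Remark~\ref{rem:32}) applied componentwise for the equivalence with $\mu_0$. Your explicit treatment of the zero-weight components and of the reverse absolute continuity $\mu_0 \ll \nu_\star$ merely fills in details the paper leaves implicit.
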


\section{Proofs of Main Results}
\label{sec:proofs}

Here we gather the proofs of various results used in the paper
which, whilst the proofs may be of independent interest, their
inclusion in the main text would break from
the flow of ideas related to Kullback-Leibler minimisation

\subsection{Proof of Lemma~\ref{le:CS}}\label{sec:lemmaCS}
The following ``parallelogram identity" (See \cite[Equation (2.2)]{CS}) is easy to check: for any $n,m$
\begin{align}
&\Dkl(\nu_n \| \mu) + \Dkl(\nu_m \|\mu) \notag \\
&=  \,2 \Dkl\bigg(\frac{\nu_n + \nu_m}{2} \bigg\| \mu\bigg) + \Dkl\bigg( \nu_n \bigg\|  \frac{\nu_n + \nu_m}{2} \bigg) +\Dkl\bigg(\nu_m\bigg\|  \frac{\nu_n + \nu_m}{2} \bigg) .\label{e:parallelogra}
\end{align}
By assumption the left hand side of \eqref{e:parallelogra} converges to $ 2 \Dkl (\ns\| \mu)$ as $n,m \to \infty$. Furthermore, the measure $1/2(\nu_n + \nu_m)$ converges weakly to $\ns$ as $n,m \to \infty$ and by lower semicontinuity of $\nu \mapsto \Dnm$  we have 
\begin{equation*}
\liminf_{n,m \to \infty} 2 \Dkl\bigg(\frac{\nu_n + \nu_m}{2} \bigg\| \mu\bigg) \geq 2  \Dkl (\ns\| \mu).
\end{equation*}
By the non-negativity of $\Dkl$ this implies that 
\begin{equation}\label{e:BV1}
\Dkl\bigg( \nu_m \bigg\|  \frac{\nu_n + \nu_m}{2} \bigg)  \to 0 \quad \text{and }\quad\Dkl\bigg(\nu_n\bigg\|  \frac{\nu_n + \nu_m}{2} \bigg) \to 0.
\end{equation}
As we can write 
\begin{equation*}
\|\nu_n - \nu_m \rTV \leq \Big\| \nu_n - \frac{\nu_n + \nu_m}{2} \Big\|_{{\rm {tv}}} + \Big\| \nu_m - \frac{\nu_n + \nu_m}{2} \Big\|_{{\rm {tv}}},
\end{equation*}
equations \eqref{e:BV1} and the Pinsker inequality
\begin{equation*}
\| \nu -\mu \rTV \leq \sqrt{\frac12 \Dnm}
\end{equation*}
(a proof of which can be found in   \cite{cover2012elements}) 
 imply that the sequence is Cauchy with respect to the total variation norm. By assumption the $\nu_n$  converge \emph{weakly} to $\ns$ and this implies convergence in total variation norm.

\subsection{Proof of Lemma \ref{le:Gamma}}

Recall $(\ea, \, \lambda_\alpha, \alpha \geq 1)$ the eigenfunction/eigenvalue 
pairs of $C_0$, as introduced above \eqref{e:projectgamma}.  For any $\alpha, \beta$ we write 
\begin{equation*}
\Gamma_{\alpha,\beta} = \la \ea, \Gamma e_\beta \ra.
\end{equation*}
Then \eqref{e:FHH} states that 
\begin{equation*}
\sum_{1 \leq \alpha, \beta< \infty} \lambda_\alpha \, \lambda_\beta \Gamma_{\alpha,\beta}^2  < \infty.
\end{equation*}
Define $\N_0=\N^2 \setminus\{1, \ldots,N_0  \}^2$.
Then the preceding display implies that, for any $\delta >0$  there exists an $N_0 \geq 0 $ such that
\begin{equation}\label{e:Gamma2}
\sum_{(\alpha, \beta) \in \N_0} \lambda_\alpha \, \lambda_\beta \Gamma_{\alpha,\beta}^2  < \delta^2.
\end{equation}
This implies that for $x = \sum_\alpha x_\alpha \ea \in \cH^1$ we get
\begin{align}\label{e:Gamma1}
\la x, \Gamma x \ra &= \sum_{1 \leq \alpha, \beta < \infty} \Gamma_{\alpha,\beta} x_\alpha x_\beta \notag\\
&= \sum_{1 \leq \alpha, \beta \leq N_0} \Gamma_{\alpha, \beta} x_\alpha x_\beta + 
\sum_{ (\alpha, \beta) \in \N_0} \Gamma_{\alpha, \beta} x_\alpha x_\beta. 
\end{align}
The first term on the right hand side of \eqref{e:Gamma1} can be bounded by 

\begin{align}\label{e:Gamma4}
\bigg| \sum_{1 \leq \alpha, \beta \leq N_0} \Gamma_{\alpha, \beta} x_\alpha x_\beta \bigg|  \leq \max_{1 \leq \alpha, \beta \leq N_0} \big| \Gamma_{\alpha,\beta}\big|  \| x \|_{\cH}^2.
\end{align}
For the second term we get using Cauchy-Schwarz inequality and \eqref{e:Gamma2}
\begin{align}
\bigg| \sum_{ (\alpha, \beta) \in \N_0} \Gamma_{\alpha, \beta} x_\alpha x_\beta \bigg|&= \bigg|  \sum_{ (\alpha, \beta) \in \N_0}   \sqrt{\lambda_\alpha \lambda_\beta} \Gamma_{\alpha, \beta}  \,\frac{  \, x_\alpha x_\beta }{\sqrt{\lambda_\alpha \lambda_\beta}}\bigg|  \notag\\
&\leq \delta \la x, \Cni x\ra.\label{e:Gamma3}
\end{align}
We can conclude from \eqref{e:Gamma1}, \eqref{e:Gamma4}, and \eqref{e:Gamma3} that $\Gamma$ is infinitesimally form-bounded with respect to $\Cni$ (see e.g. \cite[Chapter X.2]{RS2}). In particular, by the KLMN theorem (see \cite[Theorem X.17]{RS2}) the form $Q_\Gamma$ is bounded from below, closed, and there exists a unique self-adjoint operator denoted by $\Cni + \Gamma$ with form domain $\cH^1$ that generates $Q_\Gamma$. 

If $Q_\Gamma$ is strictly positive, then so is $\Cni + \Gamma$  and its inverse  $(\Cni + \Gamma)^{-1}$. As $\Cni +\Gamma$ has form domain $\cH^{1}$ the operator $(\Cni + \Gamma)^{-\frac12} C_0^{-\frac12}$ is bounded on $\cH$ by the closed graph theorem and  it follows that,
as the composition of a trace class operator with two bounded
operators, 
\begin{equation*}
(\Cni + \Gamma)^{-1} =  \big( (\Cni + \Gamma)^{-\frac12} C_0^{-\frac12}  \big)  C_0  \big( (\Cni + \Gamma)^{-\frac12} C_0^{-\frac12} \big)^{\star}    
\end{equation*}
is a trace-class operator. It is hence
the covariance operator of a centred Gaussian measure on $\cH$. It  satisfies the conditions in of the Feldman-Hajek Theorem by assumption.

If  $Q_\Gamma$ is not strictly positive, then the intersection of the spectrum of $\Cni + \Gamma$ with $(-\infty,0]$ is not empty and hence  it cannot be the inverse covariance of a Gaussian measure. 

\subsection{Proof of Theorem~\ref{thm:uniqueness} }\label{sec:proofUniqueness}

We start  the proof  of Theorem \ref{thm:uniqueness} with the following Lemma:
\begin{lemma}\label{le:PhiConvergence}
Let $\nu= \cN(m,C)$ be equivalent to $\mu_0$. For any $\gamma \geq 1$ let $\pi_\gamma\colon \cH \to \cH$ be the orthogonal projector on the space $\cH_\gamma$ introduced in \eqref{e:projectgamma}. Furthermore, assume that $\Phi \colon X \to \R_+$ satisfies the second inequality in \eqref{e:PhiBo}. Then we have 
\begin{equation}\label{e:aaa1}
\lim_{\gamma  \to \infty }\E^{\nu}\big[ \Phi(\pi_\gamma x)\big] = \E^{\nu}\big[\Phi(x) \big].
\end{equation}
\end{lemma}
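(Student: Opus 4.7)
The plan is to apply the dominated convergence theorem. First I would establish the pointwise convergence $\Phi(\pi_\gamma x)\to \Phi(x)$ for $\nu$-almost every $x$, and then exhibit a $\nu$-integrable function dominating $\Phi(\pi_\gamma x)$ uniformly in $\gamma$. Continuity of $\Phi$ on $X$ (a standing assumption of the paper) reduces the first task to showing that $\pi_\gamma x\to x$ in the norm of $X$ for $\nu$-a.e.\ $x$, while the exponential growth bound in \eqref{e:PhiBo}, combined with $\alpha\in (0,2)$, hints that the domination should come from a Gaussian concentration inequality.

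For the $\nu$-a.s.\ convergence in $X$: since $\nu$ and $\mu_0$ are equivalent, it suffices to prove $\pi_\gamma x\to x$ in $X$ for $\mu_0$-a.e.\ $x$. A $\mu_0$-sample admits the Karhunen--Lo\`eve expansion $x=m_0+\sum_{\alpha\ge 1}\sqrt{\lambda_\alpha}\,\xi_\alpha e_\alpha$ with $\xi_\alpha$ i.i.d.\ standard Gaussians, and its partial sums are precisely $\pi_\gamma x+(m_0-\pi_\gamma m_0)$. Since $\mu_0(X)=1$, these partial sums converge in $X$ in distribution; the It\^o--Nisio theorem, applied to the series of symmetric independent $X$-valued Gaussians, upgrades this to $\mu_0$-a.s.\ convergence in $X$. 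Equivalence transfers the property to $\nu$, and continuity of $\Phi$ then yields $\Phi(\pi_\gamma x)\to \Phi(x)$ $\nu$-a.s.

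For the domination, set
\begin{equation*}
M(x) := \sup_{\gamma \ge 1} \|\pi_\gamma x\|_X,
\end{equation*}
which is $\nu$-a.s.\ finite by the previous step. The growth bound gives
\begin{equation*}
0 \le \Phi(\pi_\gamma x) \le c_2 \exp\bigl(c_3 M(x)^\alpha\bigr)
\end{equation*}
uniformly in $\gamma$, so one only needs $\exp(c_3 M^\alpha) \in L^1(\nu)$. The key analytic input is that $M$ has Gaussian tails under $\nu$: since $(\pi_\gamma x)$ is an $X$-valued Gaussian sequence converging $\nu$-a.s.\ in $X$, a standard Borell-type concentration inequality for suprema of convergent Gaussian sequences yields $\E^{\nu}[\exp(\varepsilon M^2)]<\infty$ for some $\varepsilon>0$. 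Because $\alpha<2$, an elementary Young-type inequality gives $c_3 M^\alpha \le \varepsilon M^2 + C_\alpha$, which provides the required integrability. Dominated convergence then proves \eqref{e:aaa1}.

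The main obstacle is the uniform-in-$\gamma$ control of $\|\pi_\gamma x\|_X$ needed for the domination: the operators $\pi_\gamma$ are orthogonal on $\cH$ but need not be uniformly bounded on the Banach space $X$, so one cannot argue by a trivial bound of the form $\|\pi_\gamma x\|_X \lesssim \|x\|_X$. The resolution exploits the Gaussian structure more strongly, through Borell's inequality applied to the almost surely convergent Gaussian sequence $(\pi_\gamma x)$.
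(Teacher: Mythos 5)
Your proposal is correct and follows essentially the same route as the paper: $\nu$-a.s.\ convergence of $\pi_\gamma x \to x$ in $X$ (the paper cites the Karhunen--Lo\`eve/white-noise expansion result of Da Prato--Zabczyk where you invoke It\^o--Nisio), continuity of $\Phi$ on $X$, a Fernique-type exponential moment bound, and the condition $\alpha<2$ to pass to the limit in the expectation. The only cosmetic difference is in the domination step: you apply Fernique/Borell once to the a.s.\ finite measurable seminorm $\sup_{\gamma}\|\pi_\gamma x\|_X$ to get a single dominating function, whereas the paper derives a uniform-in-$\gamma$ bound on $\E^{\nu}\bigl[\exp\bigl(\lambda\|\pi_\gamma x\|_X^2\bigr)\bigr]$ from a uniform quantile estimate and Fernique's theorem; both are valid.
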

\begin{proof}
It is a well known property of the white noise/Karhunen-Loeve expansion (see e.g. \cite[Theorem 2.12]{ZDP} ) that $\| \pi_\gamma x -x \|_X \to 0$ $\mu_0$-almost surely, and as $\nu$ is equivalent to $\mu_0$, also $\nu$-almost surely. Hence,
by continuity of $\Phi$ on $X$, $\Phi(\pi_\gamma x)$ converges
$\nu-$almost surely to $\Phi(x)$.


As $\nu(X) =1$ there exists a constant $0 <K_\infty < \infty$ such that  $\nu(\| x\|_{X} \geq K_\infty) \leq \frac{1}{8}$. On the other hand, by the $\nu$-almost sure convergence of $\| \pi_\gamma x - x \|_X$ to $0$
 there exists a $\gamma_\infty \geq 1$ such that for all $\gamma > \gamma_\infty$ we have $\nu\big(\|\pi_\gamma x -x \|_X \geq 1 \big) \leq \frac18$ which implies that
\begin{equation*}
\nu\big( \| \pi_\gamma x  \| \geq K_\infty +1 \big) \leq \frac14 \qquad \text{for all } \gamma \geq \gamma_\infty.
\end{equation*}
For any $\gamma \leq \gamma_\infty$ there exists another  $0 < K_\gamma< \infty$ such that $\nu\big( \| \pi_\gamma x  \| \geq K_\gamma  \big) \leq \frac14$ and hence if we set $K = \max \{K_1, \ldots, K_{\gamma_\infty}, K_{\infty}+1  \}$ we get
\begin{equation*}
\nu\big( \| \pi_\gamma x  \| \geq K \big) \leq \frac14 \qquad \text{for all } \gamma \geq 1.
\end{equation*}
By Fernique's Theorem (see e.g. \cite[Theorem 2.6]{ZDP}) this implies the existence of a $\lambda >0$ such that 
\begin{equation*}
\sup_{\gamma \geq 1}\E^{\nu} \big[ \exp \big(\lambda \| \pi_\gamma x\|^2  \big)  \big]<\infty.
\end{equation*}
Then the desired statement \eqref{e:aaa1} follows from the dominated convergence theorem observing  that \eqref{e:PhiBo} implies the pointwise bound
\begin{equation}
\Phi(x) \leq c_2 \exp( c_3 \| x\|^\alpha_X \big) \leq c_4 \exp( \lambda \| x\|^2_X \big),
\end{equation}
for  $0 < c_4< \infty$ sufficiently large. 
\end{proof}

Let us also recall the following property.
\def\AGS{\cite[Lemma 9.4.5]{AGS}}
\begin{proposition}[\AGS]\label{le:AGS1}
Let $\mu,\nu \in \Me$ be a pair of arbitrary probability measures
on $\cH$ and let $\pi \colon \cH \to \cH$ be a measurable mapping. Then we have
\begin{equation}
\Dkl( \nu \circ \pi^{-1}\| \mu \circ \pi^{-1}) \leq  \Dnm.
\end{equation}

\end{proposition}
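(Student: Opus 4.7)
The plan is to establish the data-processing inequality for Kullback--Leibler divergence by reducing to the absolutely continuous case and then applying conditional Jensen's inequality to the convex function $t \mapsto t \log t$.

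First, I would dispose of the trivial case: if $\nu \not\ll \mu$, then by the convention in \eqref{e:KL} we have $\Dnm = +\infty$ and the bound is vacuous. So from now on assume $\nu \ll \mu$ and set $f := d\nu/d\mu \in L^1(\mu)$. An immediate consequence is that $\nu \circ \pi^{-1} \ll \mu \circ \pi^{-1}$: indeed, if $A \in \cF$ satisfies $(\mu \circ \pi^{-1})(A) = \mu(\pi^{-1}(A)) = 0$, then $\nu(\pi^{-1}(A)) = \int_{\pi^{-1}(A)} f \, d\mu = 0$, so $(\nu \circ \pi^{-1})(A) = 0$. Hence $g := d(\nu \circ \pi^{-1})/d(\mu \circ \pi^{-1})$ exists.

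Next, I would identify $g \circ \pi$ as a conditional expectation. Let $\cG := \pi^{-1}(\cF) \subseteq \cF$ be the sub-$\sigma$-algebra generated by $\pi$. For any $A \in \cF$, the change-of-variables identity gives
\begin{equation*}
\int_{\pi^{-1}(A)} (g \circ \pi) \, d\mu = \int_A g \, d(\mu \circ \pi^{-1}) = (\nu \circ \pi^{-1})(A) = \nu(\pi^{-1}(A)) = \int_{\pi^{-1}(A)} f \, d\mu.
\end{equation*}
Since $g \circ \pi$ is $\cG$-measurable and this identity holds for every set of the form $\pi^{-1}(A)$, i.e.\ for every set in $\cG$, we conclude $g \circ \pi = \E^\mu[f \mid \cG]$ $\mu$-almost surely.

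Now I would apply Jensen's inequality for conditional expectations with the convex function $\varphi(t) = t \log t$ (extended by $\varphi(0) = 0$):
\begin{equation*}
\varphi\bigl(\E^\mu[f \mid \cG]\bigr) \leq \E^\mu[\varphi(f) \mid \cG] \qquad \mu\text{-a.s.}
\end{equation*}
Taking expectations with respect to $\mu$, using the tower property, and unwinding via change of variables:
\begin{align*}
\Dkl(\nu \circ \pi^{-1} \| \mu \circ \pi^{-1}) &= \E^{\mu \circ \pi^{-1}}[\varphi(g)] = \E^\mu[\varphi(g \circ \pi)] \\
&= \E^\mu\bigl[\varphi\bigl(\E^\mu[f \mid \cG]\bigr)\bigr] \leq \E^\mu\bigl[\E^\mu[\varphi(f) \mid \cG]\bigr] = \E^\mu[\varphi(f)] = \Dnm.
\end{align*}

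There is no real obstacle here; the only minor point of care is handling the set $\{f = 0\}$ in writing $\varphi(f) = f \log f$ (using the convention $0 \log 0 = 0$ already fixed in \eqref{e:KL}) and verifying that Jensen's inequality applies despite $\varphi$ being bounded below by $-e^{-1}$ rather than non-negative — this is standard since $\varphi$ is convex and bounded below, so both sides of the conditional Jensen inequality are well-defined as extended-real random variables.
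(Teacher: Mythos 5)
Your proof is correct. Note that the paper itself does not prove this proposition---it is quoted directly from \cite[Lemma 9.4.5]{AGS}---so there is no internal argument to compare against; your derivation via the identification $g\circ\pi=\E^\mu[f\mid\pi^{-1}(\cF)]$ and conditional Jensen for $t\mapsto t\log t$ is the standard proof of the data-processing inequality, and the details (reduction to the absolutely continuous case, inheritance of absolute continuity under push-forward, and the boundedness below of $t\log t$ needed to make conditional Jensen legitimate when the right-hand side is infinite) are all handled properly.
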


\begin{proof}[Proof of Theorem \ref{thm:uniqueness}]
As above in \eqref{e:projectgamma}, let $(\ea, \alpha \geq 1)$ be the basis $\cH$ consisting of eigenvalues of $C_0$ with the corresponding eigenvalues $(\lambda_\alpha, \alpha \geq 1)$.  For $\gamma \geq 1$ let $\pg\colon \cH \to \cH$ be the orthogonal projection on $\Hg := \spa(e_1, \ldots , e_\gamma)$. Furthermore, for $\alpha \geq 1$ and $x \in \cH$ let $\xa(x) = \la x, \ea \ra_{\cH}$.  Then we can identify $\Hg$ with $\R^\gamma$ through the bijection
\begin{equation}\label{e:uniqueness1}
\R^\gamma \ni \Xi_\gamma = (\xi_1, \ldots, \xi_\gamma) \mapsto \sum_{\alpha=1}^\gamma \xa \ea.
\end{equation}
The identification \eqref{e:uniqueness1} in particular gives a natural way to define the $\gamma$-dimensional Lebesgue measure $\cL^\gamma$ on $\Hg$.

Denote by $\mu_{0;\gamma} = \mu_0 \circ \pg^{-1}$ the projection of $\mu_0$ on $\Hg$.  We also define $\mg$ by 
\begin{equation*}
\frac{d\mg}{d \mu_{0;\gamma}}(x) = \frac{1}{Z_\gamma} \exp \big(-\Phi(x)  \big) ,
\end{equation*}
where $Z_\gamma = \E^{\mu_{0,\gamma}}\big[ \exp \big(-\Phi(x)  \big) \big]$.
Note that in general $\mg$ does not coincide with the measure $\mu \circ \pg$. The Radon-Nikodym density of $\mg$ with respect to $\cL^\gamma$ is given by 
\begin{equation*}
\frac{d\mu_{\gamma}}{d \cL^\gamma}(x) = \frac{1}{\tilde{Z}_\gamma} \exp \big(-\Psi(x)  \big),
\end{equation*}
where $\Psi(x) = \Phi(x) + \frac{1}{2} \la  x,x \ra_{\cH^1}$ and   the normalisation constant is given by 
\begin{equation*}
\tilde{Z}_\gamma = Z_\gamma (2 \pi)^{\frac{\gamma}{2}} \prod_{\alpha=1}^\gamma \sqrt{\lambda_\alpha}.
\end{equation*}
According to the assumption the function $\Psi(x) - \frac{\kappa}{2} \la x,x \ra_{\cH^1}$ is convex on $\cH_\gamma$ which implies that for any $x_1,x_2 \in \Hg$ and for $t \in [0,1]$ we have
\begin{equation*}
\Psi\big( (1-t) x_1 + t x_2 \big) \leq (1-t) \Psi(x_1) + t\Psi(x_2) -  \kappa \frac{t(1-t)}{2} \|x_1 - x_2 \|_{\cH^1}^2. 
\end{equation*}

Let us also define the projected measures $\nu_{i;\gamma} := \nu_i \circ \pg^{-1}$ for $i = 1,2$. By  assumption the measures $\nu_{i}$ equivalent to $\mu_0$ and therefore the projections $\nu_{i;\gamma}$ are equivalent to $\mu_{0;\gamma}$.  In particular, the $\nu_{i;\gamma}$ are non-degenerate Gaussian measures on $\Hg$. Their covariance operators are given by $C_{i;\gamma} := \pg C_{i} \pg$ and the means by $m_{i;\gamma} = \pg m_i$. 

There is a convenient coupling between the $\nu_{i;\gamma}$. Indeed, set
\begin{equation}
\Lambda_\gamma = C_{2;\gamma}^{\frac12} \big( C_{2;\gamma}^{\frac12} C_{1;\gamma}  C_{2,\gamma}^{\frac12} \big)^{-\frac12}  C_{2;\gamma}^{\frac12} \in \cL(\cH_\gamma,\cH_\gamma).
\end{equation}
The operator $\Lambda_\gamma$ is symmetric and strictly positive on $\cH_\gamma$. Then define for $x \in \cH_\gamma$
\begin{equation}
\tLg (x) := \Lambda_\gamma (x - m_{1;\gamma})  + m_{2;\gamma}  .
\end{equation}
Clearly, if $x \sim \nu_{1,\gamma}$ then $\tLg(x) \sim \nu_{2,\gamma}$. Now for any $ t\in (0,1)$ we define the interpolation $\tLgt (x) = (1-t) x + t \tLg(x)$ and the approximate interpolating measures $\ntg $ for $t \in (0,1)$ as push-forward measures
\begin{equation}
\ntg := \nu_{1,\gamma} \circ \tLgt^{-1}.
\end{equation}
From the construction it follows that the $\ntg= \cN(m_{t,\gamma},C_{t,\gamma}) $ are non-degenerate Gaussian measures on $\cH_\gamma$. Furthermore, if the means $m_1$ and $m_2$ coincide, then we have $m_{1,\gamma} = m_{2,\gamma} = m_{t,\gamma}$ for all $t \in (0,1)$ and in the same way, if the covariance operators $C_1$ and $C_2$ coincide, then we have $C_{1,\gamma}=C_{2,\gamma} = C_{t,\gamma}$  for all $t \in (0,1)$.

\vskip2ex
As a next step we will establish that for any $\gamma$ the function 
\begin{equation*}
t \mapsto \Dkl(\ntg \| \mg)
\end{equation*}
is  convex. 
To this end it is useful to write 
\begin{equation}\label{e:unique0}
\Dkl(\ntg \| \mg) = \mathscr{H}_\gamma(\ntg) +\mathscr{F}_\gamma(\ntg) + \log(\tilde{Z}_\gamma)
\end{equation}
where $\mathscr{F}_\gamma(\ntg) = \E^{\ntg} \big[ \Psi(x)  \big]$ and 
\begin{equation*}
\mathscr{H}_\gamma(\ntg) = \int_{\Hg}  \frac{d \ntg}{d \cL^\gamma}(x)  \log \bigg( \frac{d \ntg}{d \cL^\gamma}(x)  \bigg)   d\cL^\gamma(x). 
\end{equation*}
Note that $\mathscr{H}_\gamma(\ntg)$ is completely independent of the measure $\mu_0$. Also note that $\mathscr{H}_\gamma(\ntg)$, the entropy of $\ntg$, can be negative because the Lebesgue measure is not a probability measure.

We will treat the terms $ \mathscr{H}_\gamma(\ntg) $ and $F_\gamma(\ntg) $ separately. The treatment of $F_\gamma$ is straightforward using the $(-\kappa)$-convexity of $\Psi$ and the coupling described above. Indeed, we can write
\begin{align}
&\mathscr{F}_\gamma(\ntg) =  \E^{\ntg} \big[ \Psi(x)  \big]\notag\\
&= \E^{\nu_{1,\gamma}}\big[ \Psi \big((1-t) x + t \tLg( x) \big) \big] \notag\\
&\leq (1-t)  \E^{\nu_{1,\gamma}}\big[ \Psi(x) \big] +  t  \E^{\nu_{1,\gamma}}\big[ \Psi(\tLg( x) ) \big]  - \kappa \frac{t (1-t)}{2} \E^{\nu_{1,\gamma}}  \| x - \tLg( x) \|_{\cH^1}^2\notag\\
& \leq (1-t) \mathscr{F}_\gamma \big( \nu_{1,\gamma}\big)  +  t   \mathscr{F}_\gamma \big( \nu_{2,\gamma}\big) -\kappa\frac{ t (1-t)}{2} \E^{\nu_{1,\gamma}}  \| x - \tLg( x) \|_{\cH^1}^2.\label{e:unique1}
\end{align}
Note that this argument does not make use of any specific properties of the mapping $x \mapsto \tLg (x)$, except that it maps $\mu_{1;\gamma}$ to  $\mu_{2;\gamma}$. The same argument would work for different mappings with this property.

To show the convexity of the functional $\mathscr{H}_\gamma$ we will make use of the fact that the matrix $\Lambda_\gamma$ is symmetric and strictly positive. For convenience, we introduce the notation 
\begin{equation*}
 \rho(x) = \frac{\nu_{1;\gamma}}{d\cL^\gamma}(x) \qquad \rho_t(x) := \frac{d\ntg}{d\cL^\gamma}(x).
\end{equation*}

Furthermore, for the moment we write $F(\rho) = \rho \log (\rho)$.  By the change of variable formula we have
\begin{equation*}
\rho_t(\tLg(x)) = \frac{\rho(x)}{\det\big((1-t) \Id_\gamma  + t\Lambda_\gamma \big) }, 
\end{equation*}
where we denote by $\Id_\gamma$ the identity matrix on $\R^\gamma$. Hence we can write 
\begin{align}
\mathscr{H}_\gamma(\ntg) &= \int_{\Hg}  F\big(\rho_t(x)\big) d\cL^\gamma(x) \notag\\
& = \int_{\Hg}    F \bigg( \frac{\rho(x)}{\det\big((1-t) \Id_\gamma + t\Lambda_\gamma \big) }\bigg) \det\big((1-t) \Id_\gamma + t\Lambda_\gamma \big)  d\cL^\gamma(x)\notag
\end{align}
For a diagonalisable matrix $\Lambda$ with non-negative eigenvalues the mapping $[0,1] \ni t \mapsto \det((1-t) \Id + t \Lambda_\gamma)^{\frac{1}{\gamma}}$ is concave, and as the map $s \mapsto F(\rho/s^d)s^d$ is non-increasing the resulting map is convex in $t$. Hence we get
\begin{align}
\mathscr{H}_\gamma(\ntg)& \leq (1-t)  \int_{\Hg}    F \big( \rho(x)\big)    d\cL^\gamma(x)+  t  \int_{\Hg}    F \bigg(  \frac{\rho(x)}{ \Lambda_\gamma}  \bigg)\, \det\big( \Lambda_\gamma \big)  d\cL^\gamma(x)\notag\\
&= (1-t) \mathscr{H}_\gamma\big(\nu_{1;\gamma}\big)  + tH_{\gamma}\big(\nu_{2;\gamma}\big) \label{e:unique2}
\end{align}
  Therefore, combining \eqref{e:unique0}, \eqref{e:unique1} and \eqref{e:unique2} we obtain for any $\gamma$ that 
  \begin{align}\label{e:unique3}
  \Dkl\big(\ntg \big\| \mu_{\gamma} \big) \leq&  (1-t) \Dkl\big(\nu_{1,\gamma} \big\| \mu_{\gamma}  \big) +  t \Dkl\big(\nu_{2,\gamma} \big\| \mu_{\gamma}  \big) \notag\\
  &- \kappa \frac{t (1-t)}{2}  \E^{\nu_{1,\gamma}}  \| x - \tLg( x) \|_{\cH^1}^2.
  \end{align}

\vskip2ex
 It remains to pass to the limit $\gamma \to \infty$ in \eqref{e:unique3}. First  we establish that for $i=1,2$ we have $\Dkl\big(\nu_{i,\gamma} \big\| \mu_{\gamma}  \big)  \to \Dkl\big(\nu_{i} \big\| \mu  \big)$. In order to see that we write
 \begin{equation}\label{e:Unique4}
 \Dkl\big(\nu_{i,\gamma} \big\| \mu_{\gamma}  \big) = \Dkl\big(\nu_{i,\gamma} \big\| \mu_{0, \gamma}  \big) + \E^{\nu_{i,\gamma}}\big[ \Phi(x) \big] + \log(Z_\gamma),
 \end{equation}
 and a similar identity holds for $ \Dkl\big(\nu_{i} \big\| \mu  \big)$. The Gaussian measures $\nu_{i,\gamma}$ and $\mu_{0,\gamma} $ are projections of the measures $\nu_{i}$ and $\mu_0$ and hence they converge weakly as probability measures on $\cH$ to these measures as $\gamma \to \infty$. Hence the lower-semicontinuity of the Kullback-Leibler divergence (Proposition \ref{le:firstproperties}) implies that for $i=1,2$
  \begin{equation*}
\liminf_{\gamma \to \infty } \Dkl\big(\nu_{i,\gamma} \big\| \mu_{\gamma}  \big)  \geq \Dkl\big(\nu_{i} \big\| \mu_0  \big).
\end{equation*}
 On the other hand the Kullback-Leibler divergence is monotone under projections (Proposition \ref{le:AGS1}) and hence we get
\begin{equation*}
\limsup_{\gamma \to \infty } \Dkl\big(\nu_{i,\gamma} \big\| \mu_{\gamma}  \big)  \leq \Dkl\big(\nu_{i} \big\| \mu_0  \big),
\end{equation*} 
which established the convergence of the first term in \eqref{e:Unique4}. The convergence of the $Z_\gamma = \E^{\mu_{0,\gamma}}\big[ \exp \big(-\Phi(x)  \big) \big]$ and of the  $\E^{\nu_{i,\gamma}}\big[ \Phi(x) \big]$ follow from Lemma \ref{le:PhiConvergence} and the integrability assumption \eqref{e:PhiBo}.
%
%
 
 In order to pass to the limit $\gamma \to \infty$ on the left hand side of \eqref{e:unique3} we note that for fixed $t \in (0,1)$ the measures $\ntg$ form a tight family of measures on $\cH$. Indeed, by weak convergence the families of measures $\nu_{1,\gamma}$ and $\nu_{2,\gamma}$ are tight on $\cH$. Hence, for every $\eps>0$ there exist compact in $\cH$ sets $K_1$ and $K_2$ such that for $i = 1,2$ and for any $\gamma$ we have $\nu_{i,\gamma}(K_i^c) \leq \eps$.  For a fixed $t \in (0,1)$ the set 
 \begin{equation*}
 K_t := \{  x = (1-t) x_1 + t x_2 \colon \quad x_1 \in K_1 , \, x_2 \in K_2 \}
 \end{equation*}
 is compact in $\cH$ and we have, using the definition of $\ntg$ that
 \begin{equation*}
 \ntg(K_t^c) \leq \nu_{1,\gamma}(K_1^c) + \nu_{2,\gamma}(K_2^c) \leq 2 \eps, 
 \end{equation*}
 which shows the tightness. Hence we can extract a subsequence that converges to a limit $\nu_{t}^{1 \to 2}$. This measure is Gaussian by Lemma \ref{le:GaussianConv} and by construction its mean coincides with $m_1$ if $m_1 = m_2$ and in the same way its covariance coincides with $C_1$ if $C_1 = C_2$. By lower semicontinuity of the Kullback-Leibler divergence (Proposition \ref{le:firstproperties}) we get
 \begin{equation}
  \Dkl\big(\nu_{t}^{1 \to 2} \big\| \mu \big) \leq \liminf_{\gamma \to \infty}  \Dkl\big(\ntg \big\| \mu_{\gamma} \big)  
 \end{equation}
 Finally, we have 
\begin{equation}\label{e:unique7}
\limsup_{\gamma \to \infty } \E^{\nu_{1,\gamma}}  \| x - \tLg( x) \|_{\cH^1}^2 := K > 0.
\end{equation}
In order to see this note that the measures $\rho_\gamma := \nu_{1,\gamma} [\Id + \tLg]^{-1}$ form a tight family of measures on $\cH \times \cH$. Denote by $\rho$ a limiting measure. This measure is a coupling of $\nu_1$ and $\nu_2$ and hence if these measures do not coincide we have
\begin{equation*}
\E^{\rho} \|x-y \|_{\cH^1}^2   >0.
\end{equation*}
Hence, the desired estimate \eqref{e:unique7} follows from Fatou's Lemma. This finishes the proof.
\end{proof}

\appendix
\section{Appendix}
.

\subsection{Proof of Proposition \ref{le:firstproperties}}

For completeness we give a proof of the well-known Proposition 
\ref{le:firstproperties}, following closely 
the exposition in \cite[Lemma 1.4.2]{DE};
see also \cite[Lemma 9.4.3]{AGS}.

We start by recalling the Donsker-Varadhan variational formula
\begin{equation}
\Dnm = \sup_{\Theta}  \, \En \Theta - \log \E^{\mu} e^\Theta ,
\label{e:DklVariational}
\end{equation} 
where the supremum can be taken either over all bounded continuous functions or all bounded measurable functions $\Theta \colon \cH \to \R$. Note that as soon as $\nu$ and $\mu$ are equivalent, the supremum is realised  for 
$\Theta = \log \big( \frac{d \nu}{ d \mu } \big)$.

We first prove lower semi-continuity. 
For any bounded and  \emph{continuous}   $\Theta\colon \cH \to \R$ the mapping $(\nu, \mu) \mapsto  \En \Theta - \log \E^{\mu} e^\Theta$ is continuous with respect to weak convergence of $\nu$ and $\mu$. Hence, by \eqref{e:DklVariational} the mapping $(\nu, \mu) \mapsto \Dnm$ is lower-semicontinuous as 
the pointwise supremum of continuous mappings.

We now prove compactness of sub-levelsets.
By the lower semi-continuity of $\nu \mapsto \Dnm$  and  Prokohorov's Theorem 
\cite{billingsley2009convergence} 
it is sufficient to show that for any $M < \infty$  the set $\cB := \{ \nu \colon \Dnm \leq M \}$ is tight. 
The measure $ \mu$ is inner regular, and therefore for any $0<\delta \leq 1$ there exists a compact set $K_\delta$ such that $\mu(K_\delta^c) \leq \delta$. Then choosing 
$\Theta = \mathbf{1}_{K_\delta^c} \log\big(1 + \delta^{-1} \big)  $ 
in \eqref{e:DklVariational} we get, for any $\nu \in \cB$,
\begin{align*}
\log\big(1 + \delta^{-1} \big)   \nu(K^c_\delta)& =  \En \Theta\\
 &\leq M  +\log(  \E^{\mu} e^{\Theta}  \big)\\
& = M  + \log \Big( \mu (K_\delta) + \mu(K_\delta^c) \big( 1 + \delta^{-1} \big)  \Big)\\
& \leq  M  + \log \Big(1  + \big( \delta + 1 \big)  \big).
\end{align*}
Hence, if  for  $\eps>0$ we choose $\delta$ small enough to ensure that 
\begin{equation*}
\frac{M  + \log (3)  }{\log\big(1 + \delta^{-1} \big) } \leq \eps,
\end{equation*}
we have, for all $\nu \in \cB$, that  $\nu(K_\delta^c) \leq \eps$.

\subsection{Some properties of Gaussian measures}\label{A:Gaussian}
The following Lemma
summarises some useful facts about weak convergence of Gaussian measures.
\begin{lemma}\label{le:GaussianConv}
Let $\nu_n$ be a sequence of Gaussian measures on $\cH$ with mean $m_n \in \cH$ and covariance operators $C_n$. 
\begin{enumerate}
\item If the $\nu_n$ converge weakly to $\ns$, then $\ns$ is also Gaussian. 
\item If $\ns$ is Gaussian with mean $m_\star$ and covariance operator $\Cs$, then  $\nu_n$ converges weakly to $\ns$ if and only if the following conditions are satisfied:
\begin{itemize}
\item[a)] $\|m_n -m_\star \|_{\cH}$ converges to $0$. 
\item[b)] $\| \sqrt{C_n} - \sqrt{C_\star} \|_{\HS} $ converges to $0$.  
\end{itemize}
\item Condition b) can be replaced by the following condition: 
\begin{itemize}
\item[b')] $\|C_n -C_\star\|_{\cL(\cH)}$ and $ \E^{\nu_n} \|x\|_{\cH}^2 - \E^{\ns} \|x\|_{\cH}^2$ converge to $0$   . 
\end{itemize}
\end{enumerate}
\end{lemma}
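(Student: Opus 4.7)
My plan is to handle the three parts in turn, with most of the effort going into part 2, and then deriving part 3 as a short corollary of the techniques developed there.

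For part 1, I would show directly that $\nu_\star$ is Gaussian by checking the definition: for each $\phi\in\cH^\star$, continuity of $\phi$ gives $\nu_n\circ\phi^{-1}\to\nu_\star\circ\phi^{-1}$ weakly on $\R$, and the class of (possibly degenerate) Gaussian measures on $\R$ is closed under weak convergence. Hence every one-dimensional push-forward of $\nu_\star$ is Gaussian, so $\nu_\star$ is itself Gaussian.

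For part 2, the reverse implication ((a),(b) $\Rightarrow$ weak convergence) is the easier direction: the characteristic functions $\hat\nu_n(\phi)=\exp(i\langle\phi,m_n\rangle-\tfrac12\|\sqrt{C_n}\phi\|^2)$ converge pointwise to $\hat\nu_\star(\phi)$ by (a),(b), and the uniform Hilbert-Schmidt bound on $\sqrt{C_n}$ together with standard tail estimates delivers tightness of $\{\nu_n\}$, so Lévy's continuity theorem on Hilbert space yields weak convergence. The forward direction is the substantive one. I would first use tightness of $\{\nu_n\}$ to invoke uniform Fernique, giving $\sup_n\E^{\nu_n}\exp(\lambda\|x\|^2)<\infty$ for some $\lambda>0$. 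This supplies the uniform integrability of $\|x\|^2$ needed to conclude $\E^{\nu_n}\|x\|^2=\|m_n\|^2+\tr C_n\to\|m_\star\|^2+\tr C_\star=\E^{\nu_\star}\|x\|^2$. At the same time, pointwise convergence of characteristic functions (obtained by testing against $\phi$ and $-\phi$ to separate linear and quadratic parts) gives weak convergence $m_n\rightharpoonup m_\star$ in $\cH$ and weak-operator convergence $\langle\phi,C_n\phi\rangle\to\langle\phi,C_\star\phi\rangle$. Combining the weak lower semicontinuity $\|m_\star\|\leq\liminf\|m_n\|$ and $\tr C_\star\leq\liminf\tr C_n$ with the fact that the two liminfs must sum exactly to the limit of the second moment, I obtain the separate norm and trace convergences $\|m_n\|\to\|m_\star\|$ and $\tr C_n\to\tr C_\star$. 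Then $m_n\to m_\star$ strongly in $\cH$ follows from weak convergence plus norm convergence in a Hilbert space. For the square roots, continuity of the functional calculus on positive bounded operators gives $\sqrt{C_n}\to\sqrt{C_\star}$ in operator norm, and hence weakly in $\HS$ after approximating test operators by finite rank ones; since $\|\sqrt{C_n}\|_{\HS}^2=\tr C_n\to\tr C_\star=\|\sqrt{C_\star}\|_{\HS}^2$, the same weak convergence plus norm convergence trick, now applied inside the Hilbert space $\HS$, yields strong HS convergence.

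For part 3, the implication (b) $\Rightarrow$ (b') is immediate from $\|\cdot\|_{\cL(\cH)}\leq\|\cdot\|_{\HS}$ together with $\E^{\nu_n}\|x\|_\cH^2=\|m_n\|_\cH^2+\|\sqrt{C_n}\|_{\HS}^2$. For (b') $\Rightarrow$ (b), I would run the same Hilbert-space argument: operator-norm convergence of $C_n$ to $C_\star$ gives operator-norm, hence weak HS, convergence of $\sqrt{C_n}\to\sqrt{C_\star}$; convergence of second moments together with (a) gives $\tr C_n\to\tr C_\star$, i.e.\ $\|\sqrt{C_n}\|_{\HS}\to\|\sqrt{C_\star}\|_{\HS}$; weak HS convergence plus HS-norm convergence then yields the strong HS convergence of the square roots required in (b).

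The main obstacle will be in the forward direction of part 2, namely upgrading the cheap weak-type convergences of $m_n$ and $C_n$ (which fall straight out of characteristic functions) to the genuinely strong convergences in $\cH$ and $\HS$ demanded by the statement. That upgrade hinges on convergence of the second moment $\E^{\nu_n}\|x\|^2$, which in turn relies on uniform integrability of $\|x\|^2$ along the sequence; delivering this uniform integrability cleanly, via tightness and uniform Fernique, is the one step that needs genuine care.
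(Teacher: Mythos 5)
Your part 1 is exactly the paper's argument: push forward along continuous linear functionals, use that weak limits of one--dimensional Gaussians are Gaussian. For parts 2 and 3 the paper gives no proof at all --- it simply cites Bogachev, Chapter 3.8 --- so there your proposal does genuinely more work. The overall strategy you adopt (extract weak--type convergence of means and covariances from the characteristic functions, get convergence of the second moment from uniform integrability, and then upgrade to strong convergence via the Radon--Riesz property, first in $\cH$ for the means and then in the Hilbert space $\HS$ for the square roots) is the standard route and is essentially how the cited reference proceeds. Two of your auxiliary claims deserve a caveat, though. In the easy direction, a \emph{uniform Hilbert--Schmidt bound} on $\sqrt{C_n}$ does not deliver tightness (consider $C_n=e_n\otimes e_n$); what you actually need, and do have from (b), is the uniform smallness of $\|(\Id-P_N)\sqrt{C_n}\|_{\HS}$ over $n$ for $N$ large, which follows from the \emph{convergence} in $\HS$, not from boundedness.

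The one genuine gap is in the forward direction of part 2, at the sentence ``continuity of the functional calculus on positive bounded operators gives $\sqrt{C_n}\to\sqrt{C_\star}$ in operator norm.'' At that point in your argument you only possess weak-operator convergence of $C_n$ to $\Cs$ (from the characteristic functions, via polarisation) together with $\tr C_n\to\tr \Cs$; you have not established convergence of $C_n$ in operator norm, and weak-operator convergence alone is not enough to apply norm-continuity of the square root. The missing step is the standard but nontrivial lemma that for positive trace-class operators, weak-operator convergence plus convergence of traces implies trace-norm (hence operator-norm) convergence: split with a finite-rank spectral projection $P$ of $\Cs$, note $\tr\bigl((\Id-P)C_n(\Id-P)\bigr)=\tr C_n-\tr(PC_nP)$ becomes uniformly small, and control the cross terms by Cauchy--Schwarz in $\HS$. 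Once that is inserted, the rest of your chain --- operator-norm convergence of $\sqrt{C_n}$, weak $\HS$ convergence by finite-rank approximation under the uniform trace bound, and Radon--Riesz in $\HS$ using $\|\sqrt{C_n}\|_{\HS}^2=\tr C_n\to\tr\Cs$ --- goes through, and your part 3 is then a correct corollary of the same mechanism.
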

\begin{proof}
1.) Assume that $\nu_n$ converges weakly to $\nu$. Then for any continuous linear functional $\ph \colon \cH \to \R$ the push-forward measures $ \nu_n \circ \ph^{-1}$ converge weakly to $\nu \circ \ph^{-1}$. The measures $ \nu_n \circ \ph^{-1}$ are Gaussian measures on $\R$. For one-dimensional Gaussians a simple calculation with the Fourier transform (see e.g. \cite[Prop. 1.1]{LeGall}) shows that weak limits are necessarily Gaussian and weak convergence is equivalent to convergence of mean and variance. Hence $\nu \circ \ph^{-1}$ is Gaussian, which in turn implies that $\nu$ is Gaussian. 
Points 2.) and 3.) are established in \cite[Chapter 3.8]{Bog}. 
\end{proof}

As a next step we recall the Feldman-Hajek Theorem as
proved in \cite[Theorem 2.23]{ZDP}.

\begin{proposition}\label{prop:FH}
Let $\mu_1 = \cN(m_1, C_1)$ and $\mu_2= \cN(m_2,C_2)$ be two Gaussian measures on $\cH$. The measures $\mu_1$ are either singular or equivalent. They are equivalent if and only if the following three assumptions hold:
\begin{enumerate}
\item The Cameron Martin spaces $C_1^{\frac12} \cH$ and $C_2^{\frac12} \cH$ 
are norm equivalent spaces with, in general, different scalar products generating 
the norms  -- we denote the space by $\cH^1$.
\item The means satisfy $m_1 - m_2 \in \cH^1$.
\item The operator $\big(C_1^{\frac{1}{2}} C_2^{- \frac{1}{2}}\big) \big(C_1^{\frac{1}{2}} C_2^{- \frac{1}{2}}\big)^{\star} - \Id$ is a Hilbert-Schmidt operator on $\cH$.
\end{enumerate}
\end{proposition}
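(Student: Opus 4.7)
The plan is to prove the dichotomy and the three-condition characterisation simultaneously by realising the two Gaussians as countable products of one-dimensional Gaussians and then appealing to Kakutani's theorem on equivalence/singularity of product measures. The role of condition (1) is to make the relevant ``ratio'' operator well-defined, (2) handles the translation part via the Cameron--Martin formula, and (3) is read off from the Hilbert--Schmidt nature of the ratio operator.

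First I would separate the mean contribution. The Cameron--Martin formula states that, for the centred Gaussian $\cN(0, C)$ and any $h \in \cH$, the translate $\tau_h \cN(0, C)$ is equivalent to $\cN(0, C)$ if $h \in C^{1/2}\cH$, with an explicit density, and mutually singular otherwise. If $\mu_1 \sim \mu_2$, then $\mu_1 \sim \tau_{m_1 - m_2} \mu_2$ and both measures share the mean $m_1$; applying the shift dichotomy for $\cN(0, C_2)$ yields $m_1 - m_2 \in C_2^{1/2}\cH$, and symmetrically $m_1 - m_2 \in C_1^{1/2}\cH$. This gives necessity of (2) and motivates (1), and reduces the remaining analysis to the centred case $m_1 = m_2 = 0$.

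Next, assuming (1), the common Cameron--Martin space $\cH^1$ carries two equivalent inner products induced by $C_1$ and $C_2$, so that $T := C_1^{1/2} C_2^{-1/2}$, initially defined on $C_2^{1/2}\cH$, extends to a bounded invertible operator on $\cH$, and $S := T T^*$ is bounded, self-adjoint, positive and boundedly invertible. Condition (3) is precisely $S - \Id \in \HS$. Diagonalising $S$ in an orthonormal basis $(f_n)$ of $\cH$ with eigenvalues $s_n > 0$ recasts (3) as $\sum_n (s_n - 1)^2 < \infty$, and choosing bases adapted to $S$ and pushing them through the covariance square roots presents both $\mu_1$ and $\mu_2$ as countable products of one-dimensional centred Gaussians whose $n$-th variance ratio is $s_n$. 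The Hellinger affinity between centred one-dimensional Gaussians of variances $\alpha$ and $\beta$ equals $\rho(\alpha,\beta) = \bigl(2\sqrt{\alpha\beta}/(\alpha + \beta)\bigr)^{1/2}$, so the $n$-th factor has affinity $\rho_n$ depending only on $s_n$, with $\rho_n \le 1$, $\rho_n = 1 \iff s_n = 1$, and $1 - \rho_n \asymp (s_n - 1)^2$ as $s_n \to 1$. By Kakutani's theorem the product measures are equivalent iff $\sum_n (1 - \rho_n) < \infty$, and mutually singular otherwise; this is precisely equivalent to (3), and simultaneously delivers the dichotomy.

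The principal technical obstacle is the diagonalisation step: making sense of the ratio operator $S$ on all of $\cH$ and carrying out a simultaneous product decomposition of both $\mu_1$ and $\mu_2$ in a single basis. Condition (1) is indispensable here — without it, $T$ fails to be bounded and boundedly invertible, the operator $S$ is not well-defined on all of $\cH$, and the Hilbert--Schmidt condition in (3) is meaningless; in that situation a direct Cameron--Martin shift argument exhibits a vector in one Cameron--Martin space but not the other, forcing $\mu_1$ and $\mu_2$ to be singular and closing the ``only if'' direction for (1).
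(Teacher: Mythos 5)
The paper does not actually prove this proposition: it is recalled from \cite[Theorem 2.23]{ZDP}, with the reformulation of item 3 justified by a further citation to \cite{Bog}, so there is no in-paper argument to compare against. Your sketch is, in outline, the classical proof that the cited source gives --- reduce to the centred case via the Cameron--Martin dichotomy, whiten by $C_2$, and apply Kakutani's dichotomy for product measures through one-dimensional Hellinger affinities --- and the same Hellinger-integral machinery does reappear in the paper's own proof of the related Lemma~\ref{le:FH}.

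Two steps of your sketch need repair. First, the deduction ``$\mu_1\sim\mu_2$ implies $\mu_1\sim\tau_{m_1-m_2}\mu_2$, hence $m_1-m_2\in C_2^{1/2}\cH$'' is circular as written: $\tau_{m_1-m_2}\mu_2=\cN(m_1,C_2)$, and its equivalence to $\mu_2=\cN(m_2,C_2)$ is exactly the Cameron--Martin assertion you are trying to derive. The standard fix is either to symmetrise (reflection about $m_1$ fixes $\mu_1$ and sends $\mu_2$ to $\cN(2m_1-m_2,C_2)$, so two translates of the \emph{same} Gaussian are equivalent and Cameron--Martin applies to give $2(m_1-m_2)\in C_2^{1/2}\cH$), or to settle the covariance part first and conclude by transitivity. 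Second, you diagonalise $S=TT^{\star}$ in an orthonormal eigenbasis, but a bounded self-adjoint operator need not admit one; it does once $S-\Id$ is compact, which you may assume in the ``if'' direction but not in the ``only if''/dichotomy direction, where the whole point is to prove singularity when $S-\Id\notin\HS$. The remedy, as in \cite{ZDP} and in the paper's proof of Lemma~\ref{le:FH}, is to compute the conditional Hellinger integrals on the finite-dimensional $\sigma$-algebras generated by whitened coordinates: these are explicit determinantal expressions, diagonalisable at every finite level, monotone along the filtration, and they tend to zero precisely when the Hilbert--Schmidt sum diverges. A last point of bookkeeping: the operator naturally attached to the product decomposition is $C_2^{-1/2}C_1C_2^{-1/2}$ rather than $C_1^{1/2}C_2^{-1}C_1^{1/2}$; the two have the same nonzero spectrum, so condition 3 is unaffected, but the coordinates that are simultaneously independent under both measures come from the spectral decomposition of the former.
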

\begin{remark}
Actually, in \cite{ZDP} item 3) is stated as  $\big(C_2^{-\frac{1}{2}} C_1^{ \frac{1}{2}}\big) \big(C_2^{-\frac{1}{2}} C_1^{ \frac{1}{2}}\big)^{\star} - \Id$ is a Hilbert-Schmidt operator on $\cH$. We find the formulation in item 3) more
useful and the fact that it is well-defined follows since $C_1^{\frac{1}{2}} C_2^{- \frac{1}{2}}$ is
the adjoint of $C_2^{-\frac{1}{2}} C_1^{ \frac{1}{2}}$. The two conditions
are shown to be equivalent in \cite[Lemma 6.3.1 (ii)]{Bog}. 
\end{remark}
The methods used within
the proof of the Feldman-Hajek Theorem, as given in \cite[Theorem 2.23]{ZDP},
are used below to prove the following characterisation of convergence with respect to total variation norm for Gaussian measures.
\begin{lemma}\label{le:FH}
For any $n \geq 1$ let $\nu_n$ be a  Gaussian measure on $\cH$ with covariance operator $C_n$ and mean $m_n$ and let $\ns$ be a Gaussian measure with covariance operator $\Cs$ and mean $m_\star$. Assume that the measures $\nu_n$  converge to  $\ns$ in total variation. Then we have
\begin{equation}\label{e:FH}
\big\| \Cs^{\frac12} \big( C_n^{-1} - \Cs^{-1}\big) \Cs^{\frac12} \big\|_{\HS}   \to 0 \quad \text{and} 
\quad  \| m_n - m_\star \|_{\cH^1} \to 0.
\end{equation} 
\end{lemma}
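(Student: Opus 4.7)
\emph{Proof plan for Lemma \ref{le:FH}.}

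The plan is to use Feldman-Hajek methods together with an explicit formula for the Hellinger affinity of two equivalent Gaussian measures. First I would show that we may assume $\nu_n \sim \nu_\star$ for all $n$: two Gaussian measures are either equivalent or mutually singular by Proposition \ref{prop:FH}, and mutual singularity forces $\Dtv(\nu_n,\nu_\star)=1$, so $\Dtv(\nu_n,\nu_\star)\to0$ rules this out for all large $n$; the finitely many exceptional indices may be discarded. Items 1--3 of Proposition \ref{prop:FH} then give that $C_n^{1/2}\cH$ and $\Cs^{1/2}\cH$ both equal $\cH^{1}$, that $m_n-m_\star\in\cH^{1}$, and that $K_n := \Cs^{1/2}(C_n^{-1}-\Cs^{-1})\Cs^{1/2}$ is Hilbert-Schmidt on $\cH$.

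Next I would translate TV-convergence into Hellinger-convergence via the standard bound $\DHel^2\lesssim \Dtv$, so that the Hellinger affinity $\rho(\nu_n,\nu_\star)\to1$. For equivalent Gaussian measures the affinity factorises into a covariance part and a mean part: letting $B_n := \Cs^{1/2} C_n^{-1}\Cs^{1/2} = I+K_n$ with eigenvalues $\{\beta_j^n\}_{j\ge1}$, and $Q_n := \tfrac12(C_n+\Cs)$ interpreted via its form on $\cH^{1}$, one has
\begin{equation*}
\rho(\nu_n,\nu_\star)=\Big(\prod_{j\ge1}\frac{(4\beta_j^n)^{1/4}}{(1+\beta_j^n)^{1/2}}\Big)\exp\Bigl(-\tfrac18\bigl\la m_n-m_\star,Q_n^{-1}(m_n-m_\star)\bigr\ra\Bigr).
\end{equation*}
Each factor lies in $(0,1]$; so both must individually tend to $1$.

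From the covariance factor, taking logarithms yields
$\sum_{j}\log\tfrac{(1+\beta_j^n)^2}{4\beta_j^n}\to0$.
The summands are non-negative and equal $\tfrac14(\beta-1)^2+O((\beta-1)^3)$ near $\beta=1$, with quadratic lower bound on any interval of the form $[\delta,\delta^{-1}]$. Since $K_n=B_n-I\in\HS$ in particular gives $\beta_j^n\to1$ as $j\to\infty$ for each fixed $n$, and since the norms $\|K_n\|_\HS$ remain finite, a uniform-in-$n$ quadratic lower bound on the summands can be extracted, producing $\sum_j(\beta_j^n-1)^2\to0$, i.e.\ $\|K_n\|_\HS\to0$, which is the first conclusion in \eqref{e:FH}. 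From the mean factor we then get $\la m_n-m_\star,Q_n^{-1}(m_n-m_\star)\ra\to0$; using that $K_n\to0$ in $\HS$ implies that the form $Q_n^{-1}$ is uniformly comparable to $\Cs^{-1}$ on $\cH^{1}$ for large $n$, this yields $\|m_n-m_\star\|_{\cH^{1}}\to0$.

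The main obstacle will be the covariance step: justifying the infinite-dimensional affinity formula for equivalent Gaussians (the covariances need not commute, so the eigenbasis depends on $n$), and cleanly converting convergence of the infinite product into convergence of $\|K_n\|_\HS$. This hinges on controlling the $\beta_j^n$ uniformly enough to apply a quadratic lower bound term-by-term; the necessary uniform control comes from combining weak convergence (which by Lemma~\ref{le:GaussianConv} already gives $\|\sqrt{C_n}-\sqrt{\Cs}\|_\HS\to0$, and hence spectral control of $C_n$) with the Hilbert-Schmidt structure provided by Feldman-Hajek. The comparability of $Q_n^{-1}$ and $\Cs^{-1}$ needed in the mean step is a corollary of the covariance convergence and so presents no additional difficulty once the first conclusion is in hand.
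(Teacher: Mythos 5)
Your proposal is correct in outline and rests on the same core computation as the paper: passing from total variation to the Hellinger affinity, writing the covariance contribution as a product of terms $(4\beta)^{1/4}(1+\beta)^{-1/2}$, observing that smallness of $-\log H$ forces the eigenvalues $\beta_j^n$ into a fixed compact subinterval of $(0,\infty)$ uniformly in $j$ and $n$, and then using the quadratic lower bound $(\beta-1)^2\lesssim\log\frac{(1+\beta)^2}{4\beta}$ on that interval to conclude $\|K_n\|_{\HS}\to0$. Where you differ is in the organisation of the general case. You invoke the joint infinite-dimensional affinity formula for two equivalent Gaussians with different means \emph{and} different covariances and then split the two factors; the obstacle you flag --- justifying that formula when the covariances do not commute --- is genuine, and resolving it rigorously essentially forces you into the device the paper uses, namely restricting both measures to the finite-dimensional $\sigma$-algebras $\cF_\gamma$ generated by the $\Cs$-eigenbasis, computing the affinity there in closed form, and exploiting the monotonicity $H_{\cF_\gamma}(\nu_n;\ns)\geq H(\nu_n;\ns)$ of Lemma \ref{le:Hellinger} to obtain bounds uniform in $\gamma$. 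The paper instead avoids the joint formula altogether: it proves the lemma separately for centred measures and for measures with a common covariance, and reduces the general case by a symmetrisation/coupling trick --- from $\cN(m_n,C_n)\to\cN(0,\Cs)$ and $\cN(-m_n,C_n)\to\cN(0,\Cs)$ in total variation one adds independent copies to deduce $\cN(0,2C_n)\to\cN(0,2\Cs)$, which settles the covariances, and then compares $\cN(m_n,C_n)$ with $\cN(0,C_n)$ to settle the means. Your route is more direct if the joint formula is granted (and your treatment of the mean factor, via comparability of the form $Q_n^{-1}$ with $\Cs^{-1}$ once $K_n\to0$ in $\HS$, is sound), but the paper's decomposition buys a proof in which every affinity computed explicitly is either finite-dimensional or involves only one of the two parameters at a time, so no infinite-dimensional product formula ever needs independent justification.
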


In order to proof Lemma~\ref{le:FH} we recall that for two probability measures $\nu$ and $\mu$ the \emph{Hellinger distance} is defined as 
\begin{equation*}
\DHel(\nu;\mu)^2  = \frac12 \int \left( \sqrt{ \frac{d \nu}{ d\lambda}(x)} - \sqrt{ \frac{d \mu}{ d\lambda}(x)}\right)^2 d \lambda(dx),
\end{equation*}
where $\lambda$ is a probability measure on $\cH$ such that $\nu \ll \lambda$ and $\mu \ll \lambda$. Such a $\lambda$ always exists (average $\nu$ and $\mu$
for example) and the value does not depend on the choice of $\lambda$.

For this we need the Hellinger integral 
\begin{equation}\label{e:Hellinger}
\Hnm =   \int \sqrt{\frac{d \mu}{ d \lambda} (x)} \sqrt{ \frac{d \nu}{d\lambda}(x) } \, \lambda(dx) = 1-\DHel(\nu;\mu)^2.
\end{equation}
We recall some properties of $\Hnm$:
\def\ZDP{\cite[Proposition 2.19]{ZDP}}
\begin{lemma}[\ZDP]\label{le:Hellinger}
\begin{enumerate}
\item For any two probability measures $\nu$ and $\mu$ on $\cH$ we have $0 \leq \Hnm \leq 1$. We have $\Hnm = 0$ if and only if $\mu$ and $\nu$ are singular, and $\Hnm=1$ if and only if $\mu = \nu$. 
\item Let $\tilde{\cF}$ be as sub-$\sigma$-algebra of $\cF$ and denote by $H_{\tilde{\cF}}(\nu,\mu)$ the Hellinger integrals of the restrictions of $\nu$ and $\mu$ to $\tilde{\cF}$. Then we have 
\begin{equation}\label{e:HellingerMono}
H_{\tilde{\cF}}(\nu,\mu) \geq \Hnm.
\end{equation}
\end{enumerate}
\end{lemma}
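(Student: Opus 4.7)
The plan is to treat the two items separately, both reducing to elementary facts once we fix the dominating reference measure $\lambda$ and write $p=d\mu/d\lambda$, $q=d\nu/d\lambda$, so that $H(\nu;\mu)=\int \sqrt{pq}\,d\lambda$. A first bookkeeping point I would make is that the integral is independent of the chosen $\lambda$: for any two choices $\lambda, \lambda'$ with $\lambda \ll \lambda'$, the Radon–Nikodym chain rule $p = p'\cdot \frac{d\lambda'}{d\lambda}$ (and likewise for $q$) gives $\sqrt{pq}\,d\lambda = \sqrt{p'q'}\,d\lambda'$, and the general case follows by averaging against $\tfrac12(\lambda+\lambda')$.

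For part (1), non-negativity is obvious from $\sqrt{pq}\ge 0$. The upper bound $H(\nu;\mu)\le 1$ is the Cauchy–Schwarz inequality applied to $\sqrt{p}$ and $\sqrt{q}$ in $L^2(\lambda)$, noting that $\|\sqrt{p}\|_{L^2(\lambda)}^2=\mu(\cH)=1$ and similarly for $q$. For the equality characterisations: $H(\nu;\mu)=0$ forces $\sqrt{pq}=0$ $\lambda$-a.e., hence the sets $A=\{p>0\}$ and $B=\{q>0\}$ are $\lambda$-essentially disjoint, and since $\mu(A^c)=0=\nu(B^c)$ this gives mutual singularity; the converse direction is the same observation run backwards, using that when $\mu\perp\nu$ one may choose $\lambda=\tfrac12(\mu+\nu)$ and see that $p,q$ have disjoint supports. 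For $H(\nu;\mu)=1$, the Cauchy–Schwarz equality case forces $\sqrt{p}=c\sqrt{q}$ $\lambda$-a.e.\ for some constant $c$, and $\|\sqrt{p}\|_{L^2(\lambda)}=\|\sqrt{q}\|_{L^2(\lambda)}=1$ gives $c=1$, so $p=q$ $\lambda$-a.e.\ and $\mu=\nu$; the converse is trivial. Equivalently, one can simply invoke the identity $H=1-\DHel^2$ already recorded in \eqref{e:Hellinger} and read off both equality cases from the fact that $\DHel$ is a metric on probability measures.

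For part (2), the key input is conditional Cauchy–Schwarz. Let $\tilde{p}=\E^\lambda[p\mid\tilde{\cF}]$ and $\tilde{q}=\E^\lambda[q\mid\tilde{\cF}]$; these are versions of the Radon–Nikodym derivatives of the restrictions of $\mu$ and $\nu$ to $\tilde{\cF}$ with respect to the restriction of $\lambda$. Applying the conditional Cauchy–Schwarz inequality to $\sqrt{p}$ and $\sqrt{q}$ yields
\begin{equation*}
\E^\lambda\!\left[\sqrt{pq}\,\bigm|\,\tilde{\cF}\right]\le \sqrt{\E^\lambda[p\mid\tilde{\cF}]}\sqrt{\E^\lambda[q\mid\tilde{\cF}]}=\sqrt{\tilde p\,\tilde q}\qquad \lambda\text{-a.s.}
\end{equation*}
Integrating against $\lambda$ and using the tower property collapses the left-hand side to $\int \sqrt{pq}\,d\lambda = H(\nu;\mu)$, while the right-hand side is exactly $H_{\tilde\cF}(\nu,\mu)$, proving \eqref{e:HellingerMono}.

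I do not anticipate any real obstacle here; the only subtlety to be careful about is the independence of $H(\nu;\mu)$ from the choice of dominating measure, which is why I would address that first. Everything else is either Cauchy–Schwarz (ordinary or conditional) or the equality case thereof.
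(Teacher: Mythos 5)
Your argument is correct, but note that the paper offers no proof of this lemma at all: it is quoted verbatim as \cite[Proposition 2.19]{ZDP}, so there is nothing internal to compare against. What you have supplied is a self-contained proof, and it is essentially the standard one: Cauchy--Schwarz in $L^2(\lambda)$ for the bound $\Hnm\le 1$ and its equality case for the characterisation $\Hnm=1\Leftrightarrow\mu=\nu$, disjointness of the supports of $p$ and $q$ for the singularity characterisation, and conditional Cauchy--Schwarz plus the tower property for the monotonicity \eqref{e:HellingerMono}. All steps are sound, including the identification of $\E^\lambda[p\mid\tilde\cF]$ as a version of the density of $\mu|_{\tilde\cF}$ with respect to $\lambda|_{\tilde\cF}$. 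Two small points: in your well-definedness paragraph the chain rule should read $p'=p\cdot\frac{d\lambda}{d\lambda'}$ when $\lambda\ll\lambda'$ (you have the derivative inverted), though the averaging device via $\tfrac12(\lambda+\lambda')$ is the right fix for the general case; and in the equality case of Cauchy--Schwarz one should note that neither $\sqrt p$ nor $\sqrt q$ is the zero vector (both have norm $1$) before concluding proportionality with a genuine constant $c$, which you implicitly do. Neither affects the validity of the proof.
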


\begin{proof}[Proof of Lemma \ref{le:FH}]
Before commencing the proof we demonstrate
the equivalence of the Hellinger and total variation
metrics. On the one hand the elementary inequality 
$(\sqrt{a} - \sqrt{b})^2 \leq |a - b|$ which holds for any $a,b \geq 0$ immediately yields that
\begin{equation*}
\DHel(\nu;\mu)^2 \leq \frac12 \int \left| \frac{d \nu}{d \lambda}(x) - \frac{d \mu}{d \lambda}(x)\right|  \lambda(dx) = \Dtv(\nu,\mu).
\end{equation*}
On the other hand  the elementary equality $(a-b)=(\sqrt{a} - \sqrt{b})(\sqrt{a} + \sqrt{b})$, together with the Cauchy-Schwarz inequality, yields 
\begin{align}
\Dtv(\nu;\mu) &= \frac12 \int  \left| \frac{d \nu}{d \lambda}(x) - \frac{d \mu}{d \lambda}(x)\right|  \lambda(dx) \notag \\ 
&\leq \DHel(\nu;\mu)  \int \left( \sqrt{\frac{d \nu}{d \lambda}(x)}+ \sqrt{\frac{d \mu}{d \lambda}(x)}\right)^2  \lambda(dx)  \leq 4 \DHel(\nu;\mu).\notag
\end{align}
This justifies study of the Hellinger integral to
prove total variation convergence.

We now proceed with the proof.
We first treat the case of centred measures, i.e. we assume that $m_n = m_\star =0$. For $n$ large enough $\nu_n$ and $\ns$ are equivalent and therefore their Cameron-Martin spaces  coincide as sets and in particular the operators $\Cs^{-\frac12}C_n^{\frac{1}{2}}$ are defined on all of $\cH$ and invertible. By 
Proposition \ref{prop:FH} they are invertible bounded operators on $\cH$. Denote by $R_n$ the operator $(\Cs^{-\frac12} C_n^{\frac{1}{2}}) (\Cs^{-\frac12} C_n^{\frac{1}{2}})^\star$. This shows in particular, that the  expression \eqref{e:FH} makes sense, as it can be rewritten as 
 \begin{equation*}
 \big\|  R_n^{-1} -\Id \big\|_{\HS}^2   \to 0.
 \end{equation*}

Denote by $ (\ea, \alpha \geq 1) $ \footnote{Use of the same notation as for the
eigenfunctions and eigenvectors of $C_0$ elsewhere should not cause confusion}  
the orthonormal basis of $\cH$ consisting of eigenvectors of the operator $\Cs$ and by $(\lambda_\alpha, \alpha \geq 1) $ the corresponding sequence of eigenvalues. For any $n$ the operator $R_n$ can be represented in the basis $(\ea)$ by the matrix $(r_{\alpha,\beta;n})_{1 \leq \alpha,\beta < \infty}$ where 
\begin{equation*}
r_{\alpha,\beta;n} = \frac{\la C_n \ea, e_\beta \ra}{\sqrt{\lambda_\alpha \,\lambda_\beta}}.
\end{equation*}
For any $\alpha \geq 1$ define the linear functional 
\begin{equation}\label{e:FH2}
\xa(x) = \frac{\la x, \ea \ra}{\sqrt{\lambda_\alpha}}   \qquad x \in \cH.
\end{equation}
By definition, we have for all $\alpha,\beta$ that 
\begin{align}
\E^{\ns}\big[ \xa(x)\big] &=0, \qquad   &\E^{\nu_n}\big[ \xa(x)\big] &= 0,\notag \\
 \quad  \E^{\ns}\big[ \xa(x) \xi_\beta(x) \big] &= \delta_{\alpha,\beta}, \qquad \text{and} \quad &\E^{\nu_n}\big[ \xa(x) \xi_\beta(x) \big] &= r_{\alpha,\beta;n}.\label{e:FH3}
\end{align}
For any $\gamma \geq 1$ denote by $\cF_\gamma$ the $\sigma$-algebra generated by $(\xi_1, \ldots \xi_\gamma)$. Furthermore, denote by $R_{\gamma;n}$ and $I_\gamma$ the matrices $(r_{\alpha,\beta;n})_{1 \leq \alpha,\beta \leq \gamma}$ and $(\delta_{\alpha,\beta})_{1 \leq \alpha,\beta \leq \gamma}$. With this notation \eqref{e:FH3} implies that we have 
\begin{equation*}
\frac{d\nu_n \big|_{\cF_\gamma}}{ d\ns \big|_{\cF_\gamma}} = \frac{1}{\sqrt{\det(R_{\gamma;n})}} \exp \Big(- \frac{1}{2} \sum_{\alpha,\beta \leq \gamma } \xa \xi_\beta \big( \big(R_{\gamma;n}^{-1}\big)_{\alpha,\beta} -\delta_{\alpha,\beta}\big)   \Big),
\end{equation*}
and in particular we get the Hellinger integrals
\begin{equation*}
H_{\cF_\gamma} \big( \nu_n; \ns \big) =  \frac{ (\det R_{\gamma,n}^{-1} )^{\frac14}} { \Big( \det\Big( \frac{I_\gamma + R_{\gamma;n}^{-1}}{2} \Big)\Big)^{\frac12} }.
\end{equation*}
Denoting by $\big( \lambda_{\alpha; \gamma ;n} , \alpha = 1 , \ldots, \gamma \big) $ the eigenvalues of $R_{\gamma,n}^{-1}$ this expression can be rewritten as 
\begin{equation}\label{e:FH5}
- \log \big(  H_{\cF_\gamma}  \big( \nu_n; \ns \big)\big)  = \frac14 \sum_{\alpha=1}^\gamma  \log \frac{(1 + \lambda_{\alpha;\gamma;n})^2}{4 \lambda_{\alpha; \gamma; n}} \leq - \log \big(  H(\nu_n; \ns)\big),
\end{equation}
where we have used equation \eqref{e:HellingerMono}. The  the right hand side of \eqref{e:FH5} goes to zero as $n \to \infty$ and in particular, it is bounded by $1$ for $n$ large enough, say for $n \geq n_0$. Hence there exist constants $0<K_1,K_2<\infty$ such that for all $n \geq n_0$, and all $\gamma, \alpha$ we have $K_1 \leq \lambda_{\alpha; \gamma;n} \leq K_2$. There exists a third constant $K_3>0$ such that for all $\lambda \in [K_1,K_2]$ we have
\begin{equation*}
(1 - \lambda)^2  \leq \frac{K_3}{4} \log   \frac{(1 + \lambda)^2}{4 \lambda}  .
\end{equation*}
Hence, we can conclude that  for $n \geq n_0$
\begin{align*}
\big\|R_{\gamma,n}^{-1} - I_\gamma  \big\|_{\mathcal{HS}(\R^\gamma)}^2 = \sum_{\alpha=1}^\gamma \big| \lambda_{\alpha;\gamma;n} -1 \big|^2 \leq - K_3  \log \big(  H(\nu_n; \ns)\big).
\end{align*}
As this bound holds uniformly in $\gamma$ the claim is proved in the case $m_n = m_\star =0$.

As a second step let us treat the case where $m_n$ and $m_\star$ are arbitrary but the covariance operators coincide, i.e. for all $n \geq1$ we have $C_n = C_\star=: C$. As above, let $ (\ea, \alpha \geq 1) $ the orthonormal basis of $\cH$ consisting of eigenvectors of the operator $C$ and by $(\lambda_\alpha, \alpha \geq 1) $ the corresponding sequence of eigenvalues. Furthermore, define the random variable $\xa$ as above in \eqref{e:FH2}.  Then we get the identities 
\begin{align}
\E^{\ns}\big[ \xa(x)\big] &=\frac{m_{\star;\alpha}}{\sqrt{\lambda_\alpha}}, \qquad   &\E^{\nu_n}\big[ \xa(x)\big] &= \frac{m_{n;\alpha}}{\sqrt{\lambda_\alpha}},\notag \\
 \quad   \cov^{\ns} \big( \xa(x) ,   \xi_\beta(x) \big) &= \delta_{\alpha,\beta}, \quad \text{and} \quad &\cov^{\nu_n}\big( \xa(x) \xi_\beta(x) \big) &=  \delta_{\alpha,\beta},\notag
\end{align}
where $\cov^{\ns}$ and $\cov^{\nu_n}$ denote the covariances with respect to the measures $\ns$ and $\nu_n$.   Here we have set $m_{\star;\alpha}:=\la m_\star, \ea \ra$ and $m_{n;\alpha}:=\la m_n, \ea \ra$. Denoting as above by $\cF_\gamma$ the $\sigma$-algebra generated by $(\xi_1, \ldots, \xi_\gamma)$ we get for any $\gamma\geq1$
\begin{equation}\label{e:FH11}
H_{\cF_\gamma} \big( \nu_n; \ns \big) =  \exp\Big( - \frac18 \sum_{\alpha=1}^\gamma  \frac{1}{\lambda_\alpha}\big| m_{\star;\alpha} - m_{n;\alpha} \big|^2 \Big).
\end{equation}
Noting that $\| m_n - m_\star \|_{\cH^1}^2 = \sum_{\alpha \geq 1} \frac{1}{\lambda_\alpha}  \big|m_{n;\alpha} - m_{\star;\alpha} \big|^2$ 
and reasoning as above in \eqref{e:FH5} we get that $\| m_n - m_\star \|_{\cH^1}^2 \to 0$.

The general case of arbitrary $m_n$,$m_\star$, $C_n$, and $\Cs$ can be reduced to the two cases above. Indeed, assume that $\nu_n$ converges to $\ns$ in total variation. After a translation which does not change the total variation distance, we can assume that  $m_\star=0$. Furthermore, by symmetry if the the measures $\cN(m_n, C_n)$  converge to $\cN(0,\Cs)$, in total
variation then so do the measures $\cN(-m_n,C_n)$. A coupling 
argument, which we now give,
shows that then the Gaussian measures  $\cN(0,2C_n)$  converge to  $\cN(0,2C_\star)$, also in total variation. Let $(X_1,Y_1)$ be random variables with $X_1 \sim \cN(m_n,C_n)$ and $Y_1 \sim \cN(0, \Cs)$ and $\mathbb{P}(X_1  \neq Y_1) = \| \cN(m_n, C_n)-  \cN(0, \Cs)\rTV $ and in the same way let  let $(X_2,Y_2)$ be independent from $(X_1,Y_1)$ and such that  $X_2 \sim \cN(-m_n,C_n)$ and $Y_2 \sim \cN(0, \Cs)$ with $\mathbb{P}(X_2  \neq Y_2) = \|  \cN(-m_n, C_n)- \cN(0, \Cs))\rTV$. Then we have $X_1 + X_2 \sim \cN(0,2C_n)$, $Y_1 + Y_2 \sim \cN(0, 2 C_\star)$ and 
\begin{align*}
\|\cN(0, 2C_n) -  \cN(0, 2\Cs)\rTV &=
\mathbb{P} (X_1 + X_2 \neq Y_1 + Y_2)\\
&\leq \mathbb{P} (X_1  \neq Y_1 ) + \mathbb{P} ( X_2 \neq  Y_2) \\
&=  2  \| \cN(m_n, C_n) -  \cN(0, \Cs)\rTV.
\end{align*}
Hence we can apply the first part of the proof
to conclude that the desired conclusion concerning the covariances holds.

We now turn to the means.
From the fact that $\cN(m_n,C_n)$ and $\cN(0,C_n)$ converge to $\cN(0,\Cs)$ in total variation we can conclude by the triangle inequality that $\| \cN(m_n, C_n)  - \cN(0, C_n)\rTV \to 0$ and hence $\log H(\cN(m_n, C_n)  , \cN(0, C_n)) \to 0$.
By \eqref{e:FH11} this implies  that
\begin{align*}
\| C_n^{-\frac12} m_n \|_{\cH}  \leq 8  \log H(\cN(m_n, C_n)  , \cN(0, C_n))  \to 0.
\end{align*}
Furthermore, the convergence of $\big\| C_\star^{\frac12}(C_n^{-1} - C_{\star}^{-1})  C_\star^{\frac12} \big\|_{\HS} =\big\| (C_\star^{\frac12}C_n^{-\frac12} ) (C_\star^{\frac12}C_n^{-\frac12} \big)^{\star} - \Id \big\|_{\HS} $ implies that $\sup_{n \geq 1} \| C_\star^{-\frac12} C_n^{\frac12} \|_{\cL(\cH)} < \infty$. So we can conclude that as desired
\begin{equation*}
\|m_n \|_{\cH^1} \leq \Big( \sup_{n \geq 1} \| C_\star^{-\frac12} C_n^{\frac12} \|_{\cL(\cH)} \Big) \, \| C_n^{-\frac12} m_n \|_{\cH} \to 0.
\end{equation*}
\end{proof}

\subsection{Characterisation of Gaussian Measures Via Precision Operators}

\begin{lemma}
\label{le:PI}
Let $C_0 = (-\partial_t^2)^{-1}$ be the inverse of the Dirichlet Laplacian on $[-1,1]$ with domain $H^2([-1,1])\cap H^1_0([-1,1])$. Then $\mu_0=\cN(0,C_0)$ is
the distribution of a homogeneous Brownian bridge on $[-1,1]$. Consider measure $\nu \ll \mu_0$
defined by 
\begin{equation}
\frac{d \nu}{ d \mu_0} (x(\cdot) ) =\frac{1}{Z}\exp \Big(- \frac12 \int_{-1}^1 \theta(t) \, x(t)^2 \, dt  \Big) 
\end{equation} 
where $\theta$ is a smooth function with infimum strictly larger than -$\frac{\pi^2}{4}$ on $[-1,1]$.
Then $\nu$ is a centred Gaussian $\cN(0,C)$ with $C^{-1}=C_0^{-1}+\theta$.
\end{lemma}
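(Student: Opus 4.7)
The plan is to identify $\nu$ with the Gaussian measure $\tilde\nu=\cN(0,C)$ produced by Lemma~\ref{le:Gamma}. First I would check the hypotheses of that lemma for multiplication by $\theta$: boundedness of $\theta$ together with $\tr(C_0)<\infty$ gives
\begin{equation*}
\|C_0^{1/2}\theta C_0^{1/2}\|_{\HS}^2 \le \|\theta\|_\infty^2\,\tr(C_0)<\infty,
\end{equation*}
so $\theta\in\mathcal{HS}(\cH^1,\cH^{-1})$, and the associated form $Q_\theta(u,u)=\|u'\|_{L^2}^2+\int \theta u^2\,dt$ is strictly positive on $\cH^1=H_0^1([-1,1])$. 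Indeed, with $m:=\inf_{[-1,1]}\theta>-\pi^2/4$, the Dirichlet Poincaré inequality $\|u\|_{L^2}^2\le(4/\pi^2)\|u'\|_{L^2}^2$ yields
\begin{equation*}
Q_\theta(u,u)\ge \|u'\|_{L^2}^2+m\|u\|_{L^2}^2\ge\Bigl(1+\tfrac{4\min(m,0)}{\pi^2}\Bigr)\|u'\|_{L^2}^2,
\end{equation*}
with a strictly positive constant. Lemma~\ref{le:Gamma} then produces a unique $\tilde\nu=\cN(0,C)$, equivalent to $\mu_0$, with $C^{-1}=C_0^{-1}+\theta$, and the task reduces to proving $\nu=\tilde\nu$.

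For this I would use a finite-dimensional truncation. Let $P_N$ denote the orthogonal projection onto $\spa(e_1,\ldots,e_N)$, the span of the first $N$ eigenvectors of $C_0$, and set
\begin{equation*}
\frac{d\nu_N}{d\mu_0}(x) = \frac{1}{Z_N}\exp\Bigl(-\tfrac12\int_{-1}^1\theta(t)(P_Nx)(t)^2\,dt\Bigr).
\end{equation*}
Under $\mu_0$ the components $P_Nx$ and $(I-P_N)x$ are independent Gaussians and the density depends only on $P_Nx$; completing the square on the finite-dimensional subspace $V_N:=P_N\cH$ shows that $\nu_N$ is a centred Gaussian with inverse covariance $(C_N^{\#})^{-1}=C_0^{-1}+P_N\theta P_N$. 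To pass from $\nu_N$ to $\nu$ I apply dominated convergence: the bound
\begin{equation*}
\exp\Bigl(-\tfrac12\int\theta(P_Nx)^2\,dt\Bigr)\le \exp\Bigl(\tfrac{|m|}{2}\|x\|_{L^2}^2\Bigr)
\end{equation*}
is $\mu_0$-integrable by Fernique's theorem since $|m|/2<\pi^2/8=1/(2\lambda_1)$, and $\int\theta(P_Nx)^2\,dt\to\int\theta x^2\,dt$ $\mu_0$-almost surely because $P_Nx\to x$ in $X$. This yields $Z_N\to Z$ and $L^1(\mu_0)$-convergence of the densities, hence $\|\nu_N-\nu\|_{\rm tv}\to0$ and in particular $\nu_N\to\nu$ weakly.

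For the final identification, each $\nu_N$ has characteristic functional $\phi_{\nu_N}(h)=\exp(-\tfrac12\langle h,C_N^{\#}h\rangle)$, and weak convergence gives pointwise convergence $\phi_{\nu_N}(h)\to\phi_\nu(h)$ for every $h\in\cH$. Independently, $\|P_N\theta P_N-\theta\|_{\mathcal{HS}(\cH^1,\cH^{-1})}\to0$ and the associated forms are uniformly strictly positive, so the KLMN theorem together with \cite[Theorems VIII.23, VIII.25]{RS1}, exactly as in the proof of Theorem~\ref{thm:existence2}, yields $C_N^{\#}=(C_0^{-1}+P_N\theta P_N)^{-1}\to C$ in $\cL(\cH)$. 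Consequently $\phi_\nu(h)=\exp(-\tfrac12\langle h,Ch\rangle)$ for every $h\in\cH$, which is the characteristic functional of $\cN(0,C)$, so $\nu=\tilde\nu$. The main technical hurdle lies in this final resolvent-convergence step; the Fernique integrability bound and the finite-dimensional Gaussian computation identifying $\nu_N$ are routine.
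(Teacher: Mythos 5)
Your proposal is correct and follows essentially the same route as the paper: truncation via the spectral projections of $C_0$, the factorisation of $\mu_0$ into independent components to identify $\nu_N$ as Gaussian with precision $C_0^{-1}+P_N\theta P_N$, dominated convergence (where your explicit Fernique bound using $|m|/2<\pi^2/8$ is a welcome detail the paper leaves implicit), and strong resolvent convergence of the precision operators to identify the limiting covariance. The only cosmetic differences are that you pin down the limit through characteristic functionals rather than through Lemma~\ref{le:GaussianConv}, and that you verify the hypotheses of Lemma~\ref{le:Gamma} via the Poincar\'e inequality up front; note only that the resolvent argument gives strong (pointwise) convergence $C_N^{\#}\to C$ rather than convergence in $\cL(\cH)$, which is all you need.
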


The following proof closely follows techniques introduced
to prove Theorem 2.1 in \cite{pokern2012posterior}.

\begin{proof} 
As above, denote by $\cH = L^2([-1,1])$ and $\cH^1 =H^1_0([-1,1])$. Furthermore, let $(\ea, \lambda_\alpha, \alpha \geq 1)$ be the eigenfunction/eigenvalue pairs of $C_0$ ordered by decreasing eigenvalues.  For any $\gamma \geq 1$ let $\pg$ be the orthogonal projection on $\cH$ onto $\cH_\gamma = \spa(e_1, \ldots , e_\gamma)$. Denote by $\Hp=(\Id-\pg)\cH$.

For each $\gamma \geq 1$ define the measure $\nu_\gamma \ll \mu_0$ by

\begin{equation*}
\frac{d \nu_\gamma}{ d \mu_0} (x(\cdot) ) =\frac{1}{Z_\gamma}\exp \Big(- \frac12 \int_{-1}^1 \theta(t) \, \bigl(\pg x(t)\bigr)^2 \, dt  \Big).
\end{equation*}

We first show that the $\nu_\gamma$ are centred Gaussian and we 
characterize their covariance. To see this note that $\mu_0$
factors as the independent product of two Gaussians on $\Hm$ and
$\Hp$. Since the change of measure defining $\nu_\gamma$ depends
only on $\pg x \in \Hm$ it follows that $\nu_\gamma$ also factors as an independent
product. Furthermore, the factor on $\Hp$ coincides with the projection of  $\mu_0$ and is Gaussian.
On $\Hm$, which is finite dimensional,
it is clear that $\nu_\gamma$ is also Gaussian because the
change of measure is defined through a finite dimensional
quadratic form. This Gaussian is centred and has
inverse covariance (precision) given by $\pg(C_0^{-1}+\theta)\pg=\pg C^{-1}\pg.$  Hence $\nu_\gamma$ is also Gaussian; denote its covariance operator by $C_\gamma$.
  
A straightforward dominated convergence argument shows that $\nu_\gamma$
converges weakly to $\nu$ as a measure on $\cH$, and it follows 
that $\nu$ is a centred Gaussian by Lemma \ref{le:GaussianConv}; we
denote the covariance by $\Sigma$. It remains to show that 
$\Sigma=C.$
On the one hand, we have by Lemma \ref{le:GaussianConv}, item 3.),
that $C_\gamma$ converges to  $\Sigma$ in the operator norm. 
On the other hand we have for any $x \in \cH^1$ and for $\gamma \geq1$  that
\begin{align*}
\big| \langle x, C_\gamma^{-1} x\rangle - \langle x, C^{-1} x \rangle \big| 
& = \int_{-1}^1 \theta(t)   \big((\Id-\pg) x(t) \big)^2 \, dt \leq \| \theta \|_{L^\infty} \| (\Id-\pg) x(t) \|_{L^2}^2\\
& \leq   \| \theta \|_{L^\infty} \lambda_\gamma^2  \|  x(t) \|_{H^1_0}^2.
\end{align*} 
As the $\lambda_\gamma \to 0$ for $\gamma \to \infty$ and as  the operator $C^{\frac12} C_{0}^{-\frac12}$ is a bounded invertible operator on $\cH^1$ this implies the  convergence of $C_\gamma^{-1}$ to $C^{-1}$ in the strong resolvent sense by \cite[Theorem VIII.25]{RS1}. The conclusion then follows  as in the proof of Theorem \ref{thm:existence2}.



\end{proof}

\bibliographystyle{alpha}
\bibliography{KL}
\end{document}